\documentclass[a4paper,11pt,notitlepage,twoside]{artikel}

% Title
\title{Classification of type $\III$ Bernoulli crossed products}
\author{Stefaan Vaes\thanks{KU~Leuven, Department of Mathematics, Leuven (Belgium), stefaan.vaes@wis.kuleuven.be \newline
    Funded by ERC Consolidator Grant 614195 from the European Research Council under the European Union's Seventh Framework Programme, and by Research Programme G.0639.11 of the Research Foundation~-- Flanders (FWO).}\and Peter Verraedt\thanks{KU~Leuven, Department of Mathematics, Leuven (Belgium), peter.verraedt@wis.kuleuven.be\newline Supported by a Ph.~D.~fellowship of the Research Foundation -- Flanders (FWO).}}

% Git
%\input{vc.tex}
\date{}
%\version{\GITAbrHash}

%% Notations
\newcommand{\I}{\operatorname{I}}
\newcommand{\II}{\operatorname{I{\kern -0.2ex}I}}
\newcommand{\III}{\operatorname{I{\kern -0.2ex}I{\kern -0.2ex}I}}
\newcommand{\Sd}{\operatorname{Sd}}
\newcommand{\Inn}{\operatorname{Inn}}
\newcommand{\C}{\mathbb{C}}
\newcommand{\F}{\mathbb{F}}

\newcommand{\actson}{\curvearrowright}

\newcommand{\Aut}{\operatorname{Aut}}
\newcommand{\Out}{\operatorname{Out}}
\newcommand{\T}{\mathbb{T}}
\newcommand{\Z}{\mathbb{Z}}
\newcommand{\cF}{\mathcal{F}}

\newcommand{\id}{\mathord{\operatorname{id}}}
\newcommand{\si}{\sigma}
\newcommand{\cE}{\mathcal{E}}
\newcommand{\recht}{\rightarrow}
\newcommand{\cU}{\mathcal{U}}
\newcommand{\vphi}{\varphi}

\newcommand{\al}{\alpha}
\newcommand{\Tr}{\operatorname{Tr}}
\newcommand{\ovt}{\mathbin{\overline{\otimes}}}
\newcommand{\ootimes}{\ovt}

\newcommand{\om}{\omega}
\newcommand{\cP}{\mathcal{P}}
\newcommand{\cZ}{\mathcal{Z}}
\newcommand{\cK}{\mathcal{K}}

\newcommand{\ot}{\otimes}
\newcommand{\cL}{\mathcal{L}}
\newcommand{\Ad}{\operatorname{Ad}}
\newcommand{\dpr}{^{\prime\prime}}

\newcommand{\Mtil}{\widetilde{M}}

\newcommand{\Stab}{\operatorname{Stab}}

\newcommand{\cN}{\mathcal{N}}
\newcommand{\cS}{\mathcal{S}}
\newcommand{\Om}{\Omega}
\newcommand{\cC}{\mathcal{C}}

\newcommand{\cT}{\mathcal{T}}
\newcommand{\Ptil}{\widetilde{P}}
\newcommand{\Hbar}{\overline{H}}
\newcommand{\vphih}{\widehat{\varphi}}
\newcommand{\tauh}{\widehat{\tau}}
\newcommand{\HS}{\mathcal{HS}}
\newcommand{\altil}{\widetilde{\alpha}}
\newcommand{\psitil}{\widetilde{\psi}}
\newcommand{\RR}{\mathbb{R}}
\newcommand{\Psitil}{\widetilde{\Psi}}
\newcommand{\ZZ}{\mathbb{Z}}
\newcommand{\M}{M}
\newcommand{\Q}{Q}
\newcommand{\N}{N}
\newcommand{\inp}[2]{\langle#1, #2\rangle}
\newcommand{\fdot}{\mathop{\cdot}}
\newcommand{\bigootimes}{\mathbin{\overline\bigotimes}}
\newcommand{\pntspectrum}[1]{\ensuremath{\mathop{\text{point spectrum}} \Delta_{#1}}}

%% Further notations, that override existing latex commands
\renewcommand{\P}{P}
\renewcommand{\hat}{\widehat}
\renewcommand{\tilde}{\widetilde}
\renewcommand{\mod}{\operatorname{mod}}

\begin{document}
\maketitle
\setlength{\abovedisplayskip}{6pt}
\setlength{\belowdisplayskip}{6pt}

\begin{abstract}
\noindent Crossed products with noncommutative Bernoulli actions were introduced by Connes as the first examples of full factors of type $\III$. This article provides a complete classification of the factors $(\P,\phi)^{\F_n} \rtimes \F_n$, where $\F_n$ is the free group and $\P$ is an amenable factor with an almost periodic state $\phi$. We show that these factors are completely classified by the rank $n$ of the free group $\F_n$ and Connes's $\Sd$-invariant. We prove similar results for free product groups, as well as for classes of generalized Bernoulli actions.
\end{abstract}

\section{Introduction}

During the last decade, major breakthroughs in the classification of type $\II_1$ factors, such as group measure space constructions $L^\infty(X) \rtimes \Gamma$ associated with probability measure preserving (pmp) free ergodic actions, have been achieved.
In \cite{popa-vaes;unique-cartan-decomposition-factors-free}, it was proved that the crossed product $L^\infty(X) \rtimes \F_n$ with an \emph{arbitrary} free ergodic pmp action of the free group $\F_n$ has $L^\infty(X)$ as its unique Cartan subalgebra up to unitary conjugacy. In combination with work of Gaboriau \cite{gaboriau;invariants-relations-equivalence-groupes} and Bowen \cite{bowen;orbit-equivalence-coinduced-actions-free-products}, this yielded the classification of the Bernoulli crossed products $L^\infty(X_0^{\F_n}) \rtimes \F_n$ of the free groups, showing that the rank of $\F_n$ is a complete invariant. In particular, the specific choice of the base probability space $(X_0,\mu_0)$ plays no role.

Classifying a family of von Neumann algebras involves two steps: distinguishing between the nonisomorphic ones and proving that the others are isomorphic. For the first step, a lot of results were obtained in the framework of Popa's deformation/rigidity theory, and in the type~$\III$ case, these must be combined with the modular theory of Connes and Takesaki and the corresponding invariants for type $\III$ factors. The main source for the second step is of a different nature and ultimately makes use of Connes's discovery that in the amenable case, ``everything is isomorphic''~: classification of amenable factors \cite{connes;classification-injective-factors,haagerup;uniqueness-injective-factor-type-III}, orbit equivalence of free ergodic pmp actions of amenable groups \cite{ornstein-weiss;ergodic-theory-amenable-group-actions}, and cocycle conjugacy of outer actions of amenable groups on the hyperfinite $\II_1$ factor \cite{ocneanu;actions-amenable-groups}. Only in rare circumstances, both steps can be fully achieved simultaneously.

In this article, we provide a complete classification of the noncommutative Bernoulli crossed products of the free groups. These factors were introduced by Connes \cite{connes;almost-periodic-states} as the first examples of full factors of type $\III$; we briefly present their construction.
Let $(\P,\phi)$ be any von Neumann algebra equipped with a normal faithful state $\phi$, and let $\Lambda$ be a countably infinite group. Then take the infinite tensor product $\P^\Lambda = \bigootimes_{g \in \Lambda} \P$ indexed by $\Lambda$, with respect to the state $\phi$. The group $\Lambda$ acts on $\P^\Lambda$ by shifting the tensor factors. This action is called a \emph{noncommutative Bernoulli action}. If $\phi$ is not a trace, $N$ is a type $\III$ factor. The factor $\P^\Lambda \rtimes \Lambda$ is called a \emph{Bernoulli crossed product}. We always assume that the base algebra $(\P,\phi)$ is amenable and carries an almost periodic state, and we denote by $\Gamma(\P,\phi)$ the subgroup of $\mathbb{R}^+_0$ generated by the point spectrum of the modular operator $\Delta_\phi$.

A first and obvious invariant for the Bernoulli crossed products $P^\Lambda \rtimes \Lambda$ is amenability. If $\Lambda$ is infinite amenable, then by \cite{connes;classification-injective-factors,haagerup;uniqueness-injective-factor-type-III}, the Bernoulli crossed product $\P^\Lambda \rtimes \Lambda$ is completely determined by its type~: $\II_1$ if $\phi$ is a trace, $\III_\lambda$ for $\lambda \in (0,1)$ if $\Gamma(\P,\phi) =\{\lambda^n \mid n \in \mathbb{Z}\}$, and $\III_1$ if $\Gamma(\P,\phi)$ is dense in $\mathbb{R}^+$.

If $\Lambda$ is nonamenable, the Bernoulli crossed product $\P^{\Lambda}\rtimes \Lambda$ is a full factor and Connes' $\Sd$-invariant equals $\Gamma(\P,\phi)$; see \cite{connes;almost-periodic-states} and \cref{lemma:fullfactor} below. Hence, compared to the amenable case, more information on the state $\phi$ is preserved.
Using Popa's deformation/rigidity theory, and in particular the results of \cite{popa-vaes;unique-cartan-decomposition-factors-hyperbolic}, we show that the group $\Lambda$ is also an invariant among all the Bernoulli crossed products $\P^{\Lambda} \rtimes \Lambda$ with $\Lambda$ belonging to a large class $\cC$ of countable groups including all nonelementary hyperbolic groups (see \cref{definition:class-C}). Conversely, combining Ocneanu's classification of outer actions of amenable groups on amenable factors \cite{ocneanu;actions-amenable-groups} and Bowen's co-induction argument \cite{bowen;orbit-equivalence-coinduced-actions-free-products}, we prove that the factors $(P_i,\phi_i)^\Lambda \rtimes \Lambda$, $i=0,1$, are isomorphic whenever $\Lambda = \Sigma \star \Upsilon$ is a free product with $\Sigma$ infinite amenable and $\Gamma(P_0,\phi_0) = \Gamma(P_1,\phi_1)$.

Altogether, this gives the first main result of the article.

\begin{thmstar}\label{thm.A}
The set of factors
\begin{align*}
\bigl\{(P,\phi)^\Lambda \rtimes \Lambda \bigm| &\;\;\text{$P$ a nontrivial amenable factor with normal faithful almost periodic state $\phi$,}\\
&\;\;\text{and $\Lambda = \Sigma \star \Upsilon$ a free product of an infinite amenable group $\Sigma$ and}\\ &\;\;\text{a nontrivial countable group $\Upsilon$}\;\bigr\}
\end{align*}
is exactly classified, up to isomorphism, by $\Gamma(P,\phi) \subset \RR^+_0$ and the isomorphism class of $\Lambda$.
\end{thmstar}

When we replace the free product group $\Lambda = \Sigma \star \Upsilon$ by other classes of groups, such as the direct product of two free groups, a more rigid behavior appears and the factor $(P,\phi)^\Lambda \rtimes \Lambda$ retains more information on the state $\phi$. We obtain the following result. The class $\cC$ of countable groups is introduced in \cref{definition:class-C} and by \cref{remark:many-in-class-C}, it contains all nonelementary hyperbolic groups, as well as all nontrivial free product groups.

\begin{thmstar}\label{thm.B}
The set of factors
\begin{align*}
\bigl\{(P,\phi)^\Lambda \rtimes \Lambda \bigm| &\;\;\text{$P$ a nontrivial amenable factor with normal faithful almost periodic state $\phi$,}\\
&\;\;\text{and $\Lambda$ a direct product of two icc groups in the class $\cC$}\;\bigr\}
\end{align*}
is exactly classified, up to isomorphism, by the group $\Lambda$ and the action $\Lambda \actson (P,\phi)^\Lambda$ up to a state preserving conjugacy of the action.
\end{thmstar}

We believe that in general, the existence of a state preserving conjugacy between the actions $\Lambda \actson (P_i,\phi_i)^\Lambda$ is a strictly stronger condition than the equality of the groups $\Gamma(P_i,\phi_i)$ that appears in \cref{thm.A}. This can be best illustrated when $\Lambda = \ZZ$ and $(P,\phi)$ is a factor of type $\I$ with a normal faithful state. According to the work of Connes and St{\o}rmer \cite{connes-stormer;entropy-automorphisms} and Connes, Narnhofer and Thirring \cite{connes-narnhofer-thirring;dynamical-entropy}, apart from the subgroup $\Gamma(P,\phi) \subset \RR^+_0$, also the entropy $H(\phi)$ is a state preserving conjugacy invariant of $\Lambda \actson (P,\phi)^\Lambda$.
Here the entropy of a normal faithful state $\phi$ on a type $\I$ factor $P$ is given by
\begin{align*}
 H(\phi) = - \sum_k \phi(p_k) \log\phi(p_k),
\end{align*}
where the $p_k$ form a maximal orthogonal family of minimal projections in the centralizer $P_\phi$.

It is highly plausible that for large classes of infinite groups $\Lambda$ and type $\I$ factors $P$, the entropy $H(\phi)$ is a state preserving conjugacy invariant of $\Lambda \actson (P,\phi)^\Lambda$. Some evidence for this can be found in the commutative case, as Bowen \cite{bowen;measure-conjugacy-invariant-free-group-actions,bowen;measure-conjugacy-invariants-sofic-groups} showed that the entropy $\mu_0$ of the base probability space $(X_0,\mu_0)$ is a conjugacy invariant for Bernoulli actions $\Lambda \curvearrowright (X_0,\mu_0)^{\Lambda}$ of sofic groups. Furthermore, for every countably infinite group $\Lambda$, two Bernoulli actions $\Lambda \curvearrowright X_0^\Lambda$ and $\Lambda \curvearrowright Y_0^\Lambda$ were shown to be conjugate once $(X,\mu_0), (Y,\nu_0)$ are both not two-atomic and $H(\mu_0) = H(\nu_0)$, see \cite{ornstein;bernoulli-shifts-same-entropy,ornstein;bernoulli-shifts-infinite-entropy,bowen;every-group-almost-ornstein}.

One can thus even speculate that for large classes of countable groups $\Lambda$, the existence of a state preserving conjugacy between two Bernoulli actions $\Lambda \actson (P_i,\phi_i)^\Lambda$, $i=0,1$, with $P_i$ amenable factors and $\phi_i$ normal faithful almost periodic states, is equivalent with the equality of both $\Gamma(P_0,\phi_0) = \Gamma(P_1,\phi_1)$ and $H(\phi_0) = H(\phi_1)$. To prove such a statement, there is a two-fold open problem to be solved. Is Connes-Narnhofer-Thirring entropy a conjugacy invariant for noncommutative Bernoulli actions of, say, sofic groups? In other words, is it possible to develop a noncommutative version of Bowen's sofic entropy? Secondly, do equal entropy and equal point spectrum imply conjugacy of the actions? In other words, is there a noncommutative Ornstein theorem? This is a `classical' problem that is open for the group $\ZZ$. But once it is solved for $\ZZ$, by co-induction, the same result follows for all groups containing $\ZZ$.

In contrast with the situation above, a conjugacy between \emph{two-sided} Bernoulli actions $\Lambda \times \Lambda \curvearrowright (\P_0,\phi_0)^\Lambda$ and $\Lambda \times \Lambda \curvearrowright (\P_1,\phi_1)^\Lambda$ for an icc group $\Lambda$ yields more information, as it provides an isomorphism between the base algebras. Thus, for two-sided Bernoulli actions, we get the following optimal result. We say that a sequence of elements $g_n$ in a group $\Lambda$ is central if for every $h \in \Lambda$, we have that $g_n h = h g_n$ eventually. We call such a sequence trivial if $g_n = e$ eventually.

\begin{thmstar}\label{thm.C}
The set of factors
\begin{align*}
\bigl\{(P,\phi)^\Lambda \rtimes (\Lambda \times \Lambda) \bigm| &\;\text{$P$ a nontrivial amenable factor, $\phi$ a normal faithful almost periodic state,}\\
&\;\text{and $\Lambda$ an icc group in the class $\cC$ without nontrivial central sequences}\;\bigr\}
\end{align*}
is exactly classified, up to isomorphism, by the group $\Lambda$ and the pair $(P,\phi)$ up to state preserving isomorphism.
\end{thmstar}

As we see below, the assumption on central sequences in \cref{thm.C} exactly ensures that the crossed product $P^\Lambda \rtimes (\Lambda \times \Lambda)$ is full.

The proofs of the above classification theorems for almost periodic type $\III$ factors all make use of the modular theory of Tomita-Takesaki-Connes and the so-called discrete decomposition of an almost periodic factor, see \cref{section:almostperiodicweights}. In \cref{lemma:cocycleconjugatecore}, we provide the technical lemma that relates cocycle conjugacy of state preserving actions on almost periodic factors $(P,\phi)$ with cocycle conjugacy of the associated actions on the discrete decomposition of $(P,\phi)$.

To prove theorems \ref{thm.A}, \ref{thm.B} and \ref{thm.C}, there is an `isomorphism part' in which we prove that the factors are isomorphic when the invariants are the same, and a `non-isomorphism part' in which we recover the invariants from the factor. The isomorphism part is trivial in theorems \ref{thm.B} and \ref{thm.C}, but is nontrivial in \cref{thm.A}. As explained above, in \cref{section:isomorphism}, we establish the isomorphism part first for Bernoulli actions of infinite amenable groups $\Sigma$ using \cite{ocneanu;actions-amenable-groups} and then co-induce to $\Sigma \star \Upsilon$ by a noncommutative version of \cite{bowen;orbit-equivalence-coinduced-actions-free-products}.

To recover the group $\Lambda$ as an invariant of an almost periodic (generalized) Bernoulli crossed product $(P,\phi)^I \rtimes \Lambda$, we first pass to the discrete decomposition and then use the main results of \cite{popa-vaes;unique-cartan-decomposition-factors-free,popa-vaes;unique-cartan-decomposition-factors-hyperbolic,ioana;cartan-subalgebras-amalgamated-free-product-factors} providing unique crossed product decomposition theorems for factors of the form $R \rtimes \Lambda$, where $\Lambda \actson R$ is an outer action on the hyperfinite $\II_1$ factor $R$ of a group $\Lambda$ that is a free group, or a hyperbolic group, or a free product group, etc. In \cref{section:regular-subalgebras}, we introduce the class $\cC$ of countable groups $\Lambda$ for which such a unique crossed product decomposition theorem holds. We prove that this class $\cC$ is stable under taking extensions and stable under commensurability. \Cref{thm.non-iso-class-C-type-III} provides a general result recovering the cocycle conjugacy class of $\Lambda \actson (P,\phi)$ from the crossed product $P \rtimes \Lambda$, provided $\Lambda$ belongs to the class $\cC$, $P$ is amenable, $\phi$ is almost periodic and $P \rtimes \Lambda$ is a full factor.

Finally, to go from cocycle conjugacy to conjugacy for actions of the form $\Lambda \actson (P,\phi)^I$, we must prove that all $1$-cocycles for the action are trivial. In \cite{popa;rigidity-non-commutative-bernoulli}, Popa proved such a cocycle superrigidity theorem for noncommutative Bernoulli actions of property (T) groups (and more generally, $w$-rigid groups). Later, in  \cite{popa;superrigidity-malleable-actions-spectral-gap}, Popa also established cocycle superrigidity for commutative Bernoulli actions $\Lambda \actson (X,\mu)^\Lambda$ of nonamenable direct product groups $\Lambda$, by introducing his spectral gap methods in deformation/rigidity theory. As an appendix, we adapt the proof of \cite{popa;superrigidity-malleable-actions-spectral-gap} to the noncommutative setting and prove a cocycle superrigidity theorem for Connes-St{\o}rmer Bernoulli actions of nonamenable direct product groups. This then concludes the `non-isomorphism part' of theorems \ref{thm.B} and \ref{thm.C}.

\section{Preliminaries and Notations}

All mentioned von Neumann algebras are assumed to have separable predual.

\subsection{Cocycle actions}

Let $\M_1$ and $\M_2$ be von Neumann algebras equipped with normal semifinite faithful (n.s.f.) weights $\varphi_1$ and $\varphi_2$ respectively. A homomorphism $\alpha : \M_1\to \M_2$ is called \emph{weight scaling} if $\varphi_2 \circ \alpha = \lambda \varphi_1$ for some $\lambda \in \mathbb{R}_0^+$. The number $\lambda$ is called the \emph{modulus} of $\alpha$, and will be denoted as $\mod_{\varphi_1,\varphi_2} \alpha$.
We denote by $\Aut\M$ the group of automorphisms of $\M$, and by $\Aut(\M,\varphi)$ the subgroup of weight scaling automorphisms of $\M$. We equip $\Aut\M$ with the topology where a net $\alpha_n$ of automorphisms of $\M$ converges to $\alpha \in \Aut\M$ if and only if for every $\psi \in \M_\star$, $\| \psi \circ \alpha_n - \psi \circ \alpha\|$ converges to zero. With this topology, the groups $\Aut\M$ and $\Aut(\M,\varphi)$ become Polish groups.

Let $G$ be a locally compact group, and $(\M,\varphi)$ a von Neumann algebra with an n.s.f.~weight.
We denote the \emph{centraliser} of $\varphi$ by $\M_\varphi$.
A \emph{cocycle action} of $G$ on $(\M,\varphi)$ is a continuous map
$ \alpha : G \to \Aut(\M,\varphi) : g \mapsto \alpha_g$ and a continuous map $v : G \times G \to \mathcal{U}(\M_\varphi)$ such that:
\begin{align*}
\alpha_e = \text{id}, \qquad \alpha_{g} \circ \alpha_{h} &= \Ad v_{g,h} \circ \alpha_{gh}, \qquad\forall g,h,k\in G, \\
v_{g,h}v_{gh,k} &= \alpha_g (v_{h,k}) v_{g,hk}.
\end{align*}
Such an action is denoted by $G \curvearrowright^{\alpha,v} (\M,\varphi)$. A strongly continuous map $v$ satisfying the above relations is called a 2-\emph{cocycle} for $\alpha$. If $v=1$, $\alpha$ is called an \emph{action}.

Two cocycle actions $G \curvearrowright^{\alpha,v} (\M_1,\varphi_1)$ and $G \curvearrowright^{\beta,w} (\M_2,\varphi_2)$ are \emph{cocycle conjugate through a weight scaling isomorphism} if there exists a strongly continuous map $u : G \to \mathcal{U}( (\M_2)_{\varphi_2})$ and a weight scaling isomorphism $\psi : (\M_1,\varphi_1) \to (\M_2,\varphi_2)$ satisfying
\begin{align}
\psi \circ \alpha_g &= \Ad u_g \circ \beta_g \circ \psi,\qquad\forall g,h \in G, \label{eq:outerconj}\\
\psi(v_{g,h})&=u_g \beta_g(u_h) w_{g,h} u^\star_{gh}.
\end{align}
If moreover $\mod_{\varphi_1,\varphi_2}\Psi=1$, we say that $(\alpha,v)$ and $(\beta,w)$ are cocycle conjugated \emph{through a weight preserving isomorphism}.
If $\alpha$, $\beta$ are actions and \eqref{eq:outerconj} holds for $u = 1$, we say that $\alpha$ and $\beta$ are \emph{conjugate}.
If $\delta : G_1 \to G_2$ is an isomorphism, we say that the cocycle actions $G_1 \curvearrowright^{\alpha,v} (\M_1,\varphi_1)$ and $G_2 \curvearrowright^{\beta,w} (\M_2,\varphi_2)$ are cocycle conjugate, resp.~conjugate, \emph{modulo $\delta$}, if the actions $\alpha$ and $\beta \circ \delta$ of $G_1$ are cocycle conjugate, resp.~conjugate.

An automorphism $\alpha$ of $\M$ is called \emph{properly outer} if there exists no nonzero element $y \in \M$ such that $y \alpha(x)=xy$ for all $x \in \M$. A cocycle action $\Gamma \curvearrowright^{\alpha,v} (\M,\varphi)$ of a discrete group is called \emph{properly outer} if $\alpha_g$ is properly outer for all $g \in \Gamma, g\not=e$.

Assume now that $(\M,\varphi)$ is a von Neumann algebra with an n.s.f.~weight for which $\M_\varphi$ is a factor. Consider a properly outer cocycle action $\Gamma \curvearrowright^{\alpha,v}(\M,\varphi)$ of a discrete group $\Gamma$, that preserves the weight $\varphi$.
Suppose that $p \in \M_\varphi$ is a nonzero projection with $\varphi(p) < \infty$, and choose partial isometries $w_g \in \M_\varphi$ such that $p = w_gw_g^\star$, $\alpha_g(p) = w_g^\star w_g$, $w_e=p$.
Let $\alpha^p : G \to \Aut(p\M p, \varphi_p)$ be defined by
$ \alpha_g^p(pxp) = w_g\alpha_g(pxp)w_g^\star$, for $x \in \M$, where $\varphi_p(pxp) = \varphi(pxp)/\varphi(p)$. Denote $v_{g,h}^p = w_g \alpha_g(w_h) v_{g,h} w_{gh}^\star \in p\M_\varphi p$, for $g,h\in G$. Then $G \curvearrowright^{\alpha^p,v^p}(p\M p,\varphi_p)$ is a properly outer cocycle action, which does not depend on the choice of the partial isometries $w_g \in \M_\varphi$, up to state preserving cocycle conjugacy. We call this action the \emph{reduced cocycle action of $\alpha$ by $p$}.

Let $\alpha: G \to \Aut(\M,\varphi)$ be an action of a locally compact group on a von Neumann algebra $(\M,\varphi)$ with an n.s.f.~weight $\varphi$.
A \emph{generalized 1-cocycle} for $\alpha$ with \emph{support projection} $p \in \M_\varphi$ is a continuous map $w:G \to \M_\varphi$ such that $w_g \in p M_\varphi \alpha_g(p)$ is a partial isometry with $p=w_g w_g^\star$ and $\alpha_g(p) = w_g^\star w_g$, and
\begin{align*}
 w_{gh} = \Omega(g,h) w_g \alpha_g(w_h) \quad\text{ for all }g,h \in G,
\end{align*}
where $\Omega(g,h)$ is a scalar 2-cocycle.

\subsection{Connes-Takesaki's discrete decomposition for almost periodic factors}\label{section:almostperiodicweights}
An n.s.f.~weight $\varphi$ on a von Neumann algebra $\M$ is called \emph{almost periodic} if the modular operator $\Delta_\varphi$ on $L^2(\M,\varphi)$ is diagonalizable.

Fix an almost periodic n.s.f.~weight $\varphi$ on a von Neumann algebra $\M$. Let $\Gamma \subset \mathbb{R}^+_0$ be the subgroup generated by the point spectrum of $\Delta_\varphi$, and endowed with the discrete topology. Let $\hat{\iota} : \Gamma \hookrightarrow \mathbb{R}^+_0$ denote the inclusion map, and let $G = \hat{\Gamma}$. Consider $\mathbb{R}^+_0$ to be the dual of $\mathbb{R}$ under the pairing $\inp{t}{\mu}=\mu^{\mathbf{i}t}$ for $t \in \mathbb{R}$, $\mu \in \mathbb{R}^+_0$. Then there is a continuous group homomorphism $\iota : \mathbb{R} \to G$ determined by $\inp{\iota(t)}{\gamma}=\inp{t}{\hat\iota(\gamma)}$, for $\gamma\in \Gamma,t\in\mathbb{R}$. Since $\hat\iota$ is injective, the image of $\iota$ is dense in $G$.

There is a unique action $\sigma$ of $G$ on $\M$ such that $\varphi \circ \sigma_g = \varphi$ for $g \in G$
and such that $\sigma_{\iota(t)} = \sigma_t^\varphi$ for all $t \in \mathbb{R}$ (see \cite[Lemma 3.7.3]{connes;classification-facteurs;ecole-normale}).
Here $\sigma^\varphi$ is the modular automorphism group associated to $\varphi$.
Conversely, the existence of a continuous homomorphism from $\mathbb{R}$ to a compact group $G$ with dense image and a group action $\sigma : G \curvearrowright \M$, such that $\sigma_{\iota(t)} = \sigma_t^\varphi$ for all $t \in \mathbb{R}$, implies that $\varphi$ is almost periodic.

For every $\gamma \in \Gamma$, we denote the Arveson-Connes spectral subspace (see also section 2.1 of \cite{connes;classification-facteurs;ecole-normale} and section XI.1 of \cite{takesaki;theory-operator-algebras-II}, whose conventions we follow) of $\gamma$ by
\begin{align*}
\M_{\varphi,\gamma} = \{ x \in \M \mid \forall g \in G : \sigma_g(x) = \inp{g}{\gamma} x\}.
\end{align*}
The linear span of $\bigcup_{\gamma \in \Gamma} \M_{\varphi,\gamma}$ is a strongly dense $\star$-subalgebra of $\M$ (see e.g.~\cite[Lemma 1.2.3]{dykema;crossed-product-decompositions}), and $\M_{\varphi,\gamma} \not=\{0\}$ if and only if $\gamma \in \pntspectrum{\varphi}$.
If $\varphi$ is a state, then $$\M_{\varphi,\gamma} = \{ x \in M \mid \Delta_\varphi \hat{x} = \hat\iota(\gamma) \hat{x}  \},$$ and for all $a \in \M_{\varphi,\gamma}$ and $b \in \M$ we have $\varphi(a b) = \hat\iota(\gamma) \varphi(b a)$.

The crossed product $\M \rtimes_\sigma G$ of $\M$ with the action $\sigma$ is called the \emph{discrete core} of $\M$. We denote by $\pi_\sigma : \M \to \M \rtimes_\sigma G$ the canonical embedding, and by $(\lambda(s))_{s \in G}$ the canonical group of unitaries such that
\begin{align*}
 \pi_\sigma( \sigma_s(x)) = \lambda(s) \pi_\sigma(x) \lambda(s)^\star.
\end{align*}
We denote by $\hat{\sigma}$ the \emph{dual action} of $\Gamma$ on $\M \rtimes_\sigma G$, given by
\begin{align*}
 \hat{\sigma}_\gamma(\pi_\sigma(x)) = \pi_\sigma(x) \text{ for all } x \in \M \quad \text{ and } \quad \hat{\sigma}_\gamma(\lambda(s)) = \overline{\inp{s}{\gamma}} \lambda(s) \text{ for all } s \in G.
\end{align*}
The dynamical system $(\M\rtimes_\sigma G, \Gamma, \hat\sigma)$ is the \emph{discrete decomposition} associated to $\varphi$.
We have by Takesaki duality \cite[Theorem X.2.3]{takesaki;theory-operator-algebras-II} that
\begin{align*}
(\M \rtimes_\sigma G) \rtimes_{\hat\sigma} \Gamma \cong \M \ootimes B(\ell^2(\Gamma))
\end{align*}
by the isomorphism $\Phi : (\M \rtimes_\sigma G) \rtimes_{\hat\sigma} \Gamma \to \M \ootimes B(\ell^2(\Gamma))$ given by
\begin{align}
\Phi\big(\pi_{\hat\sigma} \circ \pi_\sigma(x)\big) &= x \otimes \lambda_\gamma^\star,&&\gamma \in \Gamma, \quad x \in \M_{\varphi,\gamma},\nonumber\\
\Phi\big(\pi_{\hat\sigma} \circ \lambda(s)\big) &= 1 \otimes M_{\inp{s}{\fdot}}^\star, &&s\in G,\label{eq:takesakiduality}\\
\Phi\big(\lambda(\gamma)\big) &= 1 \otimes \lambda_\gamma^\star. && \nonumber
\end{align}
Here $M_{\inp{s}{\fdot}}$ is the multiplication operator $\delta_\gamma \mapsto \inp{s}{\gamma}\delta_\gamma$, while $\lambda_\gamma$ is the translation operator $\delta_{\gamma'} \mapsto \delta_{\gamma \gamma'}$, and $(\lambda(\gamma))_{\gamma \in \Gamma}$ is the canonical group of unitaries in the second crossed product.

There exists an n.s.f.~trace $\Tr_\varphi$ on $\M\rtimes_\sigma G$ such that
\begin{align*}
\Tr_\varphi \circ \hat\sigma_{\gamma} = \hat\iota(\gamma)^{-1} \Tr_\varphi  \text{ for all }\gamma \in \Gamma,
\end{align*}
and such that the dual weight $\tilde\Tr_\varphi$ (see \cite[Definition X.1.16]{takesaki;theory-operator-algebras-II}) on $(\M\rtimes_\sigma G)\rtimes_{\hat\sigma}\Gamma$ corresponds under the duality $\Phi$ to the weight $\varphi \otimes \Tr(M_{\hat{\iota}} \fdot)$ on $\M \ootimes B(\ell^2(\Gamma))$, where $M_{\hat{\iota}}$ is the multiplication operator associated to the unbounded positive function $\hat{\iota}$ on $\Gamma$.

The bidual action $\hat{\hat\sigma}$ of $G$ on $(\M \rtimes_\sigma G) \rtimes_{\hat\sigma} \Gamma$ corresponds under $\Phi$ to the action $s \mapsto \sigma_s \otimes \Ad M_{\inp{s}{\fdot}}$ on $\M \ootimes B(\ell^2(\Gamma))$. It follows that $\M \rtimes_\sigma G$ is a factor if and only if $\M_\varphi$ is a factor (see e.g.~\cite[Proposition 2.12]{dykema;crossed-product-decompositions}).
In this case, the point spectrum of $\Delta_\varphi$ is already a group, and we say that $\M$ has a \emph{factorial discrete decomposition}.
Also remark that the dual weight $\tilde\Tr_\varphi$ is almost periodic, and that the bidual action $\hat{\hat{\sigma}}$ of $G$ is exactly the modular action for $\tilde\Tr_\varphi$.

We need the following probably well known lemma.

\begin{lemma}\label{lemma:outer-on-core}
Let $(M,\vphi)$ be a factor equipped with an almost periodic normal faithful state having a factorial discrete decomposition $M \rtimes_\sigma G$. Let $\al \in \Aut(M,\vphi)$ be a state preserving automorphism and denote by $\altil \in \Aut(M \rtimes_\sigma G)$ its canonical extension satisfying $\altil(\lambda(s)) = \lambda(s)$ for all $s \in G$.

Then, the following statements are equivalent.
\begin{enumerate}[\upshape(i)]
\item $\altil$ is an inner automorphism.
\item The restriction of $\al$ to $M_\vphi$ is inner.
\item There exists a unitary $v \in M_\vphi$ and a group element $s \in G$ such that $\al = \Ad v \circ \sigma_s$.
\end{enumerate}
\end{lemma}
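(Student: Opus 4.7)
I would prove the cycle (iii)$\Rightarrow$(i), (iii)$\Rightarrow$(ii), (ii)$\Rightarrow$(iii). The first two are short. For (iii)$\Rightarrow$(ii), by definition $M_\vphi = M^\sigma$, so $\sigma_s$ restricts to the identity on $M_\vphi$, and hence $\al|_{M_\vphi} = \Ad v|_{M_\vphi}$ is inner. For (iii)$\Rightarrow$(i), I would set $U = v\lambda(s) \in M \rtimes_\sigma G$. Since $v \in M_\vphi$ is $\sigma$-invariant, $\lambda(s')v\lambda(s')^* = \sigma_{s'}(v) = v$, so $v$ commutes with every $\lambda(s')$; combined with the fact that $G$ is abelian, a direct computation gives $\Ad U(\lambda(s')) = \lambda(s')$ and $\Ad U(x) = v\sigma_s(x)v^* = \al(x)$ for $x \in M$. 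Thus $\altil = \Ad U$ is inner.

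The heart of the proof is (ii)$\Rightarrow$(iii). Suppose $\al|_{M_\vphi} = \Ad v$ for a unitary $v \in M_\vphi$, and replace $\al$ by $\be = \Ad v^* \circ \al \in \Aut(M,\vphi)$, which fixes $M_\vphi$ pointwise. The reduced goal is to prove $\be = \sigma_{s_0}$ for some $s_0 \in G$. Because $\be$ preserves $\vphi$, it commutes with each $\sigma^\vphi_t$, and hence with $\sigma : G \actson M$ by density of $\iota(\RR) \subset G$ and weak continuity; in particular $\be$ preserves each spectral subspace $M_{\vphi,\gamma}$. The relations $\be(ax) = a\be(x)$ and $\be(xa) = \be(x)a$ for $a \in M_\vphi$ and $x \in M_{\vphi,\gamma}$ then show that $\be|_{M_{\vphi,\gamma}}$ is an $M_\vphi$-bimodule endomorphism.

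The key step is to deduce that $\be$ acts on each $M_{\vphi,\gamma}$ as multiplication by a scalar $c(\gamma) \in \T$. Given a nonzero partial isometry $u \in M_{\vphi,\gamma}$ with $p = u^*u$, $q = uu^*$ in $M_\vphi$, the element $\be(u)$ is a partial isometry with the same source $p$ and range $q$, so $\be(u) = uw$ for a unique $w \in \mathcal{U}(pM_\vphi p)$. Applying $\be$ to the identity $\be(uau^*) = uau^*$ for $a \in pM_\vphi p$ forces $waw^* = a$, so $w \in \rZ(pM_\vphi p) = \C p$ by factoriality of $M_\vphi$; hence $\be(u) = c(u)u$ with $c(u) \in \T$. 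A short linear-independence argument, applied to the polar decomposition of $u_1 + u_2$ for two nonzero partial isometries in $M_{\vphi,\gamma}$, shows $c(u)$ depends only on $\gamma$, giving $\be|_{M_{\vphi,\gamma}} = c(\gamma)\id$. Finally, testing $\be(xy) = \be(x)\be(y)$ on a nonzero product $xy \in M_{\vphi,\gamma\gamma'}$ (such products exist because $M_\vphi$ is a factor, so the source and range projections of partial isometries in different spectral subspaces can be arranged to overlap) yields $c(\gamma\gamma') = c(\gamma)c(\gamma')$. Thus $c \in \hat\Gamma = G$ corresponds to some $s_0 \in G$, and $\be = \sigma_{s_0}$, proving (iii).

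The main obstacle is the scalar claim for $\be|_{M_{\vphi,\gamma}}$; this is where the hypothesis that $M_\vphi$ is a factor (equivalently, that $M$ has a factorial discrete decomposition) is essential. The remaining character argument and the identification $c \leftrightarrow s_0$ via Pontryagin duality are routine once the scalar claim is in hand.
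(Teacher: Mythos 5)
Your list of implications --- (iii)$\Rightarrow$(i), (iii)$\Rightarrow$(ii), (ii)$\Rightarrow$(iii) --- is not a cycle through all three statements: it proves (ii)$\Leftrightarrow$(iii) and that both imply (i), but nothing in your argument derives (ii) or (iii) from (i), so the three conditions are not shown to be equivalent. This missing implication is precisely the one that genuinely involves the crossed product, and it is where the paper starts. The paper's argument for (i)$\Rightarrow$(ii) is short but not vacuous: let $p \in L(G)$ be the minimal projection corresponding to the trivial character of $G$, so that $\lambda(s)p = p$ for all $s \in G$; then $\altil(p) = p$, an inner automorphism fixing a projection restricts to an inner automorphism of the corner $p(M \rtimes_\sigma G)p$, and this corner is canonically identified with the fixed point algebra $M^\sigma = M_\vphi$, on which $\altil$ acts as $\al$. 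You need to supply some such argument, or the statement remains unproved.

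The remainder of your proposal follows the paper's route for (ii)$\Rightarrow$(iii) quite closely: reduce to $\be = \Ad v^\star \circ \al$ fixing $M_\vphi$ pointwise, use that $\be$ commutes with $\sigma$ to see that $u^\star \be(u) \in p M_\vphi p$ for a partial isometry $u \in M_{\vphi,\gamma}$ with $p = u^\star u$, and use factoriality to get a scalar. One step is shakier than you acknowledge: your passage from ``$\be(u) = c(u)u$ for every partial isometry $u$'' to ``$c(u)$ depends only on $\gamma$'' via the polar decomposition of $u_1 + u_2$ does not obviously work when the initial or final projections of $u_1, u_2$ overlap, since $u_1+u_2$ is then in general not a multiple of a partial isometry and its polar part need not detect $c(u_1)$ and $c(u_2)$ separately. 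The paper sidesteps this entirely: since $M_\vphi$ is a factor, the linear span of $M_\vphi w M_\vphi$ equals $M_{\vphi,\gamma}$ for a single nonzero partial isometry $w \in M_{\vphi,\gamma}$, and $\be(xwy) = x\be(w)y = s(\gamma)\,xwy$ immediately gives $\be = s(\gamma)\id$ on all of $M_{\vphi,\gamma}$ --- well-definedness of $s(\gamma)$ and the extension to the whole spectral subspace in one stroke. I would replace your linear-independence argument by this. The final character/Pontryagin step and the implications out of (iii) are fine.
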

\begin{proof}
(i) $\Rightarrow$ (ii). Assume that $\altil$ is an inner automorphism. Denote by $p \in L(G)$ the minimal projection corresponding to the identity element in $\hat{G}$, so that $\lambda(s) p = p$ for all $s \in G$. Since $\altil(p) = p$, also the restriction of $\altil$ to $p(M \rtimes_\sigma G)p$ is inner. But $p(M \rtimes_\sigma G)p$ can be identified with $M_\vphi$ and we conclude that (ii) holds.

(ii) $\Rightarrow$ (iii). Take $v \in \cU(M_\vphi)$ such that $\al(x) = v x v^\star$ for all $x \in M_\vphi$. Define $\beta \in \Aut(M,\vphi)$ given by $\beta(x) = v^\star \al(x) v$ for all $x \in M$. We have to prove that $\beta = \sigma_s$ for some $s \in G$. Note that $\beta$ is state preserving and that $\beta(x) = x$ for all $x \in M_\vphi$.
As above, consider $\Gamma = \hat{G}$ and fix $\gamma \in \Gamma$. Choose a nonzero partial isometry $w$ in the spectral subspace $M_{\vphi,\gamma}$. Denote $p = w^\star w$ and note that $p \in M_\vphi$. Since $\beta$ is state preserving, we have that $\beta$ commutes with the automorphisms $\sigma_s$, $s \in G$. Therefore, $w^\star \beta(w) \in p M_\vphi p$. Let $a \in p M_\vphi p$ be arbitrary. Then also $w a w^\star \in M_\vphi$ and we get that
$$w^\star \beta(w) \; a = w^\star \beta(wa) = w^\star \beta(waw^\star \, w) = w^\star \, waw^\star \, \beta(w) = a \; w^\star \beta(w) \; .$$
Since $p M_\vphi p$ is a factor, we conclude that $w^\star \beta(w) = s(\gamma) p$ for some scalar $s(\gamma) \in \T$. Since $M_\vphi$ is a factor, the linear span of $M_\vphi w M_\vphi$ equals $M_{\vphi,\gamma}$. It follows that $\beta(a) = s(\gamma) a$ for all $\gamma \in \Gamma$ and $a \in M_{\vphi,\gamma}$. We conclude that $s : \Gamma \recht \T$ is a character. So, $s \in G$ and $\beta = \sigma_s$.

(iii) $\Rightarrow$ (i). If $\al = \Ad v \circ \sigma_s$ for some $v \in \cU(M_\vphi)$ and $s \in G$, then $\altil = \Ad(v \lambda(s))$.
\end{proof}

\subsection{Connes's invariants for type \texorpdfstring{$\III_1$}{III1} factors}

To study factors of type $\III_1$, Connes \cite{connes;almost-periodic-states} introduced the invariants $\Sd$ and $\tau$, which we recall here.
Let $\M$ be a factor, then the \emph{point modular spectrum} is the subset of $\mathbb{R}_0^+$ defined by
\begin{align*}
\Sd(\M) = \bigcap_{\psi \text{ almost periodic weight on }\M} \pntspectrum{\psi}.
\end{align*}

A factor $\M$ is \emph{full} when $\Inn(\M)$ is closed inside $\Aut(\M)$.
If $\M$ is a full factor with separable predual, and if $\varphi$ is a normal faithful almost periodic weight on $\M$, then $\Sd(\M) = \pntspectrum{\varphi}$ if and only if $\M_\varphi$ is a factor (see \cite[Lemma 4.8]{connes;almost-periodic-states}).
An arbitrary factor $\M$ is full if and only if every bounded sequence $(x_n)_{n \in \mathbb{N}}$ in $\M$ such that $\forall \psi \in \M_\star : \| \psi(x_n\fdot)-\psi(\fdot x_n)\| \to 0$, is trivial, i.e.~$x_n - z_n1 \to 0$ $\star$-strongly for some sequence $(z_n)_{n\in\mathbb{N}} \in \mathbb{C}$ (see \cite[Theorem 3.1]{connes;almost-periodic-states}). In particular, if all central sequences in $\M$ are trivial, $\M$ is full (see \cite[Corollary 3.7]{connes;almost-periodic-states}). A bounded sequence $(x_n)_{n\in \mathbb{N}}$ is called \emph{central} if for all $y \in \M$, $x_ny-yx_n \to 0$ $\star$-strongly.

Let $\M$ now be a full factor of type $\III_1$. Denote by $\pi : \Aut \M \to \Out \M$ the canonical projection. Let $\psi$ be any weight on $\M$, and define $\delta : \mathbb{R} \to \Out(\M)$ by putting $\delta(t) = \pi(\sigma^\psi_t)$. This map is independent of the choice of the weight $\psi$, due to Connes's Radon-Nikodym cocycle theorem (see \cite[Theorem VIII.3.3]{takesaki;theory-operator-algebras-II}). The $\tau$ invariant of $\M$, denoted by $\tau(\M)$, is the weakest topology on $\mathbb {R}$ that makes $\delta$ continuous.

\subsection{State preserving actions and the discrete core}\label{section:statepreservingactionsonthecore}

Every state preserving action on a factor $(\P,\phi)$ equipped with an almost periodic state induces an action on the discrete core of $(\P,\phi)$. We prove two elementary lemmas, one giving a criterion that this induced action is outer and one relating cocycle conjugacy of induced actions with cocycle conjugacy of the original action.

Let $(\P,\phi)$ be a factor, with a normal faithful almost periodic state $\phi$. Let $\Lambda$ be a countable group, and assume that we are given a state preserving action $\Lambda \curvearrowright^{\alpha} (\P,\phi)$.
Denote by $\Gamma \subset \mathbb{R}^+_0$ the group generated by the point spectrum of $\Delta_{\phi}$, and let $G = \hat{\Gamma}$.
Consider the crossed product $\N = \P \rtimes_{\sigma} G$ of $\P$ by the modular action $\sigma$ of $G$.
Then $\Gamma$ acts on $\N$ by the dual action $\hat\sigma$, and since the actions $\alpha$ and $\sigma$ commute, $\Lambda$ also acts on $\N$ by $\tilde\alpha$:
\begin{align*}
\tilde\alpha_s\big(\pi_{\sigma}(x)\big) &= \pi_{\sigma}\big(\alpha_s(x)\big),&&s \in \Lambda,\quad x \in \P,\\
\tilde\alpha_s(\lambda(g)) &= \lambda(g),&&g\in G.
\end{align*}
A direct calculation shows that also $\tilde\alpha$ and $\hat\sigma$ commute. We thus find the action
$\beta$ of the countable group $\Lambda \times \Gamma$ on $\N$, given by $\beta_{(s,\gamma)} = \tilde\alpha_s \circ \hat{\sigma}_\gamma$.

\begin{lemma}\label{lemma:outer-action-on-core}
Under the above assumptions, suppose that $(P,\phi)$ has a factorial discrete decomposition $N$. If the group $\Lambda$ has trivial center and the action $\Lambda \actson^\al P$ is outer, then also the action $\altil$ of $\Lambda$ on $P \rtimes_\sigma G$ is outer.
\end{lemma}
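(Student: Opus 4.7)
The plan is a proof by contradiction combined with a direct application of Lemma \ref{lemma:outer-on-core}. Suppose that some $\altil_s$ is inner for $s \in \Lambda \setminus \{e\}$. Since $\altil_s$ is precisely the canonical extension of the state preserving automorphism $\al_s \in \Aut(P,\phi)$ that fixes the unitaries $\lambda(g)$, the equivalence (i) $\Leftrightarrow$ (iii) in Lemma \ref{lemma:outer-on-core} yields a unitary $v \in P_\vphi$ and an element $t \in G$ with
\begin{equation*}
\al_s = \Ad v \circ \sigma_t .
\end{equation*}
This is the step where all the hard work is already done: it converts the hypothesis ``$\altil_s$ inner'' into very concrete information about $\al_s$ itself.

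Next I would exploit that $\sigma$ and $\al$ commute (as recalled just before the lemma, since $\al$ is state preserving). For any $h \in \Lambda$ this gives
\begin{equation*}
\al_{shs^{-1}} = \al_s \al_h \al_s^{-1} = \Ad v \circ \sigma_t \circ \al_h \circ \sigma_t^{-1} \circ \Ad v^\star = \Ad\bigl( v \, \al_h(v^\star) \bigr) \circ \al_h .
\end{equation*}
Hence $\al_{shs^{-1}} \circ \al_h^{-1}$ is inner in $P$. Since the action $\Lambda \actson^\al P$ is outer, this forces $shs^{-1} = h$, i.e.\ $s$ commutes with every $h \in \Lambda$. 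Because $\Lambda$ has trivial center, this means $s = e$, contradicting our choice of $s$.

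I do not foresee a serious obstacle: the nontrivial content has been packaged into Lemma \ref{lemma:outer-on-core}, and the remaining argument is a standard outerness-plus-commutation manipulation. The only subtle point worth flagging is that one must use the full commutation of $\al$ with the $G$-action $\sigma$ (not merely with the modular automorphism group $\sigma^\vphi$), which is exactly what ensures that $\sigma_t$ can be pulled past $\al_h$ in the computation above. Everything else is formal.
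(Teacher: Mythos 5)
Your proof is correct and is essentially the paper's argument: the paper also invokes Lemma \ref{lemma:outer-on-core} to identify the elements $s$ with $\altil_s$ inner as $\Lambda \cap G$ inside $\Out(P)$, and then uses that $\Lambda$ and $G$ commute in $\Out(P)$ together with triviality of the center; your explicit computation with $\al_{shs^{-1}} = \Ad(v\,\al_h(v^\star)) \circ \al_h$ just unpacks that same reasoning.
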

\begin{proof}
Denote by $\Lambda_0$ the group of all $s \in \Lambda$ for which $\altil_s$ is inner. Viewing both $\Lambda$ and $G$ as subgroups of the outer automorphism group $\Out(P)$, it follows from \cref{lemma:outer-on-core} that $\Lambda_0 = \Lambda \cap G$. Since $\Lambda$ and $G$ commute inside $\Out(P)$, it follows that $\Lambda_0$ is a subgroup of the center of $\Lambda$. Therefore, $\Lambda_0 = \{e\}$.
\end{proof}

\begin{lemma}\label{lemma:cocycleconjugatecore}
Let $(\P_0,\phi_0)$, $(\P_1,\phi_1)$ be factors with normal faithful almost periodic states and factorial discrete decompositions. Assume that $\Delta_{\phi_0}$ and $\Delta_{\phi_1}$ have the same point spectrum $\Gamma = \hat{G}$.
Let $\Lambda$ be a countable group, with state preserving actions $\Lambda \curvearrowright^{\alpha^i} (\P_i,\phi_i)$.

Assume that $\psi : \P_0 \rtimes_{\sigma^0} G \to \P_1 \rtimes_{\sigma^1} G$ is an isomorphism such that the induced actions $\Lambda \times \Gamma \curvearrowright^{\beta^i} \P_i \rtimes_{\sigma^i} G$ are cocycle conjugate through $\psi$.
Then there exists a projection $p \in (\P_1)_{\phi_1}$ such that the cocycle actions $\alpha^0$ and $(\alpha^1)^p$ are cocycle conjugate through a state preserving isomorphism. If $\mod \psi \in \Gamma$, then we can take $p=1$.
\end{lemma}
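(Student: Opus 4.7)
\emph{Proof plan.} The plan is to use Takesaki duality to lift $\psi$ to an isomorphism of the stabilizations $P_i \ovt B(\ell^2(\Gamma))$ and then to cut down by a suitable projection in the centralizer of the bidual weight.

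First I would extend $\psi$ to the crossed products with the dual $\Gamma$-action. With $u_\gamma := u_{(e,\gamma)}$, the cocycle conjugacy identity for $\hat\sigma$ shows that $\Psi(\pi(x)) := \pi(\psi(x))$ and $\Psi(\lambda_0(\gamma)) := u_\gamma\lambda_1(\gamma)$ extends to a well-defined isomorphism of $(P_0 \rtimes_{\sigma^0} G)\rtimes_{\hat\sigma^0}\Gamma$ onto $(P_1\rtimes_{\sigma^1} G)\rtimes_{\hat\sigma^1}\Gamma$. Identifying both sides with $P_i \ovt B(\ell^2(\Gamma))$ under Takesaki duality~\eqref{eq:takesakiduality}, $\Psi$ becomes a weight-scaling isomorphism of modulus $\lambda := \mod\psi$ relative to $\phi_i \otimes \Tr(M_{\hat\iota}\fdot)$, the extension of $\tilde\alpha^i$ to the bidual is precisely $\alpha^i \otimes \id$, and the $\Lambda$-part of the cocycle conjugacy identity becomes $\Psi \circ (\alpha^0_s \otimes \id) = \Ad u_{(s,e)} \circ (\alpha^1_s \otimes \id) \circ \Psi$ for all $s \in \Lambda$. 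Crucially, the centralizer of $\phi_i \otimes \Tr(M_{\hat\iota}\fdot)$ coincides with $\pi_{\hat\sigma^i}(P_i \rtimes_{\sigma^i} G)$, because the bidual action $\hat{\hat\sigma}^i$ is exactly the modular group of this weight and its fixed-point algebra is the core by Takesaki duality. In particular the centralizer is a type $\II_\infty$ factor with canonical trace $\Tr_{\phi_i}$, and the 1-cocycle $s \mapsto u_{(s,e)}$ takes values in it.

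Now let $e_\gamma \in B(\ell^2(\Gamma))$ denote the rank-one projection onto $\delta_\gamma$ and select $\gamma_0 \in \Gamma$ with $\mu := \lambda\,\hat\iota(\gamma_0) \le 1$. This is possible because $\Gamma \subset \RR^+_0$ is a countable subgroup (the tracial case $\Gamma=\{1\}$ forces $\lambda=1$ and the statement is immediate), and if $\lambda \in \Gamma$ the choice $\gamma_0 = \lambda^{-1}$ gives $\mu=1$. Pick $p \in (P_1)_{\phi_1}$ with $\phi_1(p) = \mu$, which is possible since $(P_1)_{\phi_1}$ is a type $\II_1$ factor by the factorial discrete decomposition hypothesis, taking $p=1$ when $\mu=1$. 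The projection $1 \otimes e_{\gamma_0}$ lies in the centralizer, is $(\alpha^0 \otimes \id)$-invariant, and its corner reproduces $(P_0,\alpha^0)$. Both $\Psi(1\otimes e_{\gamma_0})$ and $p\otimes e_e$ lie in the centralizer and have weight $\mu$, so they correspond to projections of equal trace $\mu$ in the factor $P_1 \rtimes_{\sigma^1} G$, and are therefore equivalent via a partial isometry $v$ lying \emph{inside the centralizer}.

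Finally, the reduction calculus for cocycle actions yields that $\Ad v^\star$ state-preservingly cocycle-conjugates the reduction of $(\alpha^1\otimes\id,1)$ by $\Psi(1\otimes e_{\gamma_0})$ to its reduction by $p\otimes e_e$, the latter being by construction $(\alpha^1)^p$ on $pP_1p$; meanwhile $\Psi$ restricted to the corner cut by $1\otimes e_{\gamma_0}$ intertwines $\alpha^0$ with the reduction by $\Psi(1\otimes e_{\gamma_0})$, state-preservingly once one normalizes by the common corner weight. Composing, $\Ad v^\star \circ \Psi$ delivers the desired state-preserving cocycle conjugacy between $\alpha^0$ and $(\alpha^1)^p$, with $p=1$ whenever $\mod\psi \in \Gamma$. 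The main hurdle is the identification of the centralizer of the bidual weight with the core: it both places the 1-cocycle $u$ automatically in the centralizer and makes the centralizer into a factor, so that projections of equal weight can be equated by a partial isometry in the centralizer---which is exactly what is needed to keep the final cocycle conjugacy state preserving.
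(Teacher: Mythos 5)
Your proof is correct and follows essentially the same route as the paper: extend $\psi$ to the double crossed product, pass through Takesaki duality to $P_i \ovt B(\ell^2(\Gamma))$, identify the core $N_i$ with the centralizer of the dual weight so that the $1$-cocycle lives there, and use that $N_1$ is a $\II_\infty$ factor to match projections of equal trace before cutting down. The only (harmless) cosmetic difference is that you normalize the modulus by choosing the rank-one projection $e_{\gamma_0}$ instead of precomposing $\psi$ with a dual automorphism $\hat\sigma^0_\gamma$, and you use a partial isometry rather than a unitary to implement the final conjugation.
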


\begin{proof}
If $\Gamma = \{1\}$, there is nothing to prove, so we assume that $\Gamma \not=\{1\}$. Write $\N_i = \P_i \rtimes_{\sigma^i} G$ and let $\psi : \N_0 \to \N_1$ be the isomorphism given in the theorem. Write $\kappa = \mod \psi$. Since we may replace $\psi$ by $\psi \circ \hat{\sigma}_\gamma^0$ for any $\gamma \in \Gamma$, we may assume that $\kappa \leq 1$ and that $\kappa = 1$ if the original $\mod \psi$ belonged to $\Gamma$.

Since $\psi$ is a cocycle conjugacy between the actions $\Lambda \times \Gamma \actson N_i$, we can first extend $\psi$ to an isomorphism $\psitil : N_0 \rtimes_{\hat{\sigma}^0} \Gamma \recht N_1 \rtimes_{\hat{\sigma}^1} \Gamma$ and then observe that $\psitil$ remains a cocycle conjugacy for the natural actions of $\Lambda$. We identify $N_i \rtimes_{\hat{\sigma}^i} \Gamma$ with $P_i \ovt B(\ell^2(\Gamma))$ through the Takesaki duality isomorphism given in \eqref{eq:takesakiduality}. Under this identification, the dual weight becomes $\phi_i \ot \omega$ where $\omega$ is the weight on $B(\ell^2(\Gamma))$ given by $\omega = \Tr(M_{\hat{\iota}}\fdot)$. The $\Lambda$-action is now given by $\al^i \ot \id$ and $N_i$ corresponds to the subalgebra $(P_i \ovt B(\ell^2(\Gamma)))_{\phi_i \ot \om}$. From now on, we denote the latter as $N_i$.

Therefore under these identifications, $\psitil$ becomes an isomorphism
$$\Psi : P_0 \ovt B(\ell^2(\Gamma)) \recht P_1 \ovt B(\ell^2(\Gamma))$$
with the following properties: $\Psi$ is a cocycle conjugacy for the actions $\al^i \ot \id$, we have  that $(\phi_1 \ot \om) \circ \Psi = \kappa \, (\phi_0 \ot \om)$, and $\Psi(N_0) = N_1$.

Let $e \in B(\ell^2(\Gamma))$ be the projection onto $\mathbb{C}\delta_1 \subset \ell^2(\Gamma)$. Then $1 \ot e$ is a projection in $N_0$ with weight $1$. Therefore, $\Psi(1 \ot e)$ is a projection in $N_1$ with weight $\kappa \leq 1$. Since $N_1$ is a $\II_\infty$ factor, we can replace $\Psi$ by $(\Ad v) \circ \Psi$ for some unitary $v \in \cU(N_1)$ and assume that $\Psi(1 \ot e) = p \ot e$ where $p$ is a projection in $(P_1)_{\phi_1}$ with $\phi_1(p) = \kappa$. Restricting $\Psi$ to
$$P_0 = (1 \ot e)(P_0 \ovt B(\ell^2(\Gamma)))(1 \ot e)$$
gives the conclusion of the lemma.
\end{proof}

\subsection{Noncommutative Bernoulli actions and their properties}

Let $(\P,\phi)$ be a von Neumann algebra with a normal faithful state $\phi$. Whenever $I$ is a countable set, we write $\P^I$ for the tensor product of $\P$ indexed by $I$ with respect to $\phi$. The canonical product state on $\P^I$ will be denoted by $\phi^I$.
Consider now a countable group $\Lambda$ that acts on $I$, and let $\Lambda$ act on $\P^I$ by the (generalized) Bernoulli action
\begin{align*}
\rho(s)\big(\otimes_{k \in I} a_k\big) &= \otimes_{k \in I} a_{s^{-1} \cdot k},&\text{for }s \in \Lambda, a_h \in \P.
\end{align*}
The von Neumann algebra $(\P,\phi)$ is called the \emph{base algebra} for the Bernoulli action, and the crossed product $\P^I \rtimes \Lambda$ is called the \emph{Bernoulli crossed product}.

The following lemma is a generalization of \cite[Theorem 4.4]{connes-stormer;entropy-automorphisms}
and \cite[Proposition 2.2.2]{popa;rigidity-non-commutative-bernoulli}. For the convenience of the reader, we provide a complete proof.

\begin{lemma}\label{lemma:centralizerfactor}
Let $(\P,\phi)$ be a factor, equipped with a normal faithful almost periodic state $\phi$.
Let $I$ be a countably infinite set. Then $((\P^I)_{\phi^I})' \cap \P^I = \mathbb{C}1$, hence the von Neumann algebra $\P^I$ has a factorial discrete decomposition.
\end{lemma}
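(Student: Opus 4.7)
The plan is to exploit the infinitude of $I$ through a ``position-swap'' argument: the centralizer $(P^I)_{\phi^I}$ contains many elements of the form $w^{(i)}(w^*)^{(j)}$ with $i \neq j$ and $w$ lying in a spectral subspace $P_{\phi,\mu}$, and commutation with all of these will force any element of the relative commutant to be scalar. Write $M = P^I$. Since $\Delta_{\phi^I}$ is the closure of $\bigotimes_i \Delta_\phi$ on $L^2(P^I,\phi^I) \cong \bigotimes_i L^2(P,\phi)$, it is diagonalisable, its point spectrum generates the same group $\Gamma$ as that of $\Delta_\phi$, and every $x \in M$ admits a spectral decomposition $x = \sum_{\gamma \in \Gamma} x_\gamma$ with $x_\gamma \in M_{\phi^I,\gamma}$. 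The spectral projections are $M_{\phi^I}$-bimodular, so each $x_\gamma$ remains in $(M_{\phi^I})' \cap M$ whenever $x$ does. It therefore suffices to treat a fixed homogeneous $x \in M_{\phi^I,\gamma} \cap (M_{\phi^I})' \cap M$ and show that $x = 0$ when $\gamma \neq 1$ and $x \in \mathbb{C} 1$ when $\gamma = 1$.

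For such an $x$, I would approximate by $x_F = E_F(x) \in (P^F)_{\phi^F,\gamma}$, where $E_F \colon M \to P^F$ is the $\phi^I$-preserving conditional expectation for finite $F \subset I$; this lies in the correct spectral subspace because $E_F$ commutes with the modular action. For any nonzero partial isometry $w \in P_{\phi,\mu}$ ($\mu \in \Gamma$), any $i \in F$, and any $j \in I \setminus F$, the swap $w^{(i)}(w^*)^{(j)}$ is a partial isometry in $M_{\phi^I}$ and hence commutes with $x$. Using that $x_F$ commutes with $(w^*)^{(j)}$ for the trivial reason of disjoint tensor slots, a short factorisation computation yields
\[
\|[x_F,\, w^{(i)}(w^*)^{(j)}]\|_2^2 \;=\; \phi(ww^*)\,\|[x_F,\, w^{(i)}]\|_2^2,
\]
while the left-hand side is at most $4\,\|x - x_F\|_2^2$. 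Letting $F$ exhaust $I$ yields $[x, w^{(i)}] = 0$, and polar decomposition inside each spectral subspace (so that every element of $P_{\phi,\mu}$ factors through a partial isometry in $P_{\phi,\mu}$ and an element of $P_\phi$) together with strong density of $\bigcup_\mu P_{\phi,\mu}$ in $P$ then shows that $x$ commutes with $P^{(i)}$ for every $i \in I$.

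Since the copies $P^{(i)}$ generate $M$, this gives $x \in Z(M)$. A standard conditional expectation argument then shows $Z(M) = \mathbb{C} 1$: for any finite $F$, $E_F(x)$ is central in the finite tensor product $P^F$, which is itself a factor, so $E_F(x) = \phi^I(x) \cdot 1$, and $E_F(x) \to x$ as $F \nearrow I$. Since $Z(M) \subset M_{\phi^I, 1}$, the homogeneous components with $\gamma \neq 1$ must vanish and the $\gamma = 1$ component is scalar, proving $(M_{\phi^I})' \cap M = \mathbb{C} 1$; factoriality of the discrete decomposition then follows from the criterion recalled in \cref{section:almostperiodicweights}. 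The main obstacle will be the $L^2$ book-keeping in the commutator identity above: one must choose $j$ outside $F$ so that the tensor-product structure cleanly separates $x_F$ from $(w^*)^{(j)}$ and produces the factor $\phi(ww^*) > 0$ needed to back out information about $[x_F, w^{(i)}]$.
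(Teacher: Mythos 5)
Your proof is correct, and it rests on the same key observation as the paper's: an element of the form (weight $\gamma$ at one tensor position) times (weight $\gamma^{-1}$ at another position) lies in the centralizer of $\phi^I$, so commutation with all such ``swaps'' forces an element of $((P^I)_{\phi^I})'\cap P^I$ to commute with every tensor factor and hence to be central, after which factoriality of $P^I$ finishes the job. The implementations differ, though. The paper avoids all $L^2$ bookkeeping by introducing the state preserving isomorphisms $\alpha_n : P^{\mathbb N} \to P \ootimes P^{\mathbb N}$ that extract the $n$-th leg to the front; since $\alpha_n(x) \to 1 \otimes x$ $\star$-strongly, the limit $1\otimes x$ still lies in the relative commutant of the centralizer of $P \ootimes P^{\mathbb N}$, and commutation of $1 \otimes x$ with the centralizer element $y^\star \otimes \pi_n(y)$ (for $y$ in a spectral subspace) gives $[x,\pi_n(y)]=0$ \emph{instantly}, because the first leg of $1\otimes x$ is scalar. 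Your route keeps everything inside $P^I$ and replaces that trick with the conditional expectations $E_F$ and the commutator identity $\|[x_F, w^{(i)}(w^\star)^{(j)}]\|_2^2 = \phi(ww^\star)\,\|[x_F,w^{(i)}]\|_2^2$; this is more elementary but requires the care you flag (choosing $j \notin F$, checking that left and right multiplication by a partial isometry in the centralizer are $\|\cdot\|_2$-contractions so that the left-hand side is controlled by $\|x-x_F\|_2$, and the polar decomposition within spectral subspaces). Both the homogeneous decomposition of $x$ at the start and the restriction to homogeneous $x$ are actually unnecessary — your approximation argument works verbatim for an arbitrary element of the relative commutant — but they are harmless. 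The two proofs buy essentially the same thing; the paper's is shorter, yours is more self-contained at the level of estimates.
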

\begin{proof}
We can take $I = \mathbb{N}$. Denote $(M,\vphi) = (P,\phi)^\mathbb{N}$. Let $\pi_n : \P \to \M$ be the canonical embedding of $\P$ at the $n$-th position. Let $\Gamma \subset \mathbb{R}^+_0$ be the subgroup generated by the point spectrum of  $\Delta_{\vphi}$, and denote by $G= \hat{\Gamma}$ its compact dual group. We then have the modular automorphism groups $\sigma^\P : G \curvearrowright (\P,\phi)$, $\sigma^\M : G \curvearrowright (\M,\vphi)$, and $\sigma^{\P \otimes \M} = \sigma^\P \otimes \sigma^\M : G \curvearrowright (\P \otimes \M, \phi \otimes \vphi)$.

For any $n \in \mathbb{N}$, consider the state-preserving $\star$-isomorphism $\alpha_n : \M \to \P \otimes \M$
defined by
\begin{align*}
  \alpha_n( \otimes_k x_k) = x_n \otimes \big(x_0 \otimes x_1 \otimes \cdots \otimes x_{n-1} \otimes x_{n+1} \otimes \cdots \big).
\end{align*}
Remark that for every $x \in \M$, $\alpha_n(x) \to 1\otimes x$ $\star$-strongly.
Let now $x \in \M \cap \M_\vphi'$. Since $\alpha_n(x) \in \P \otimes \M \cap (\P \otimes \M)_{\phi \otimes \vphi}'$ for all $n \in \mathbb{N}$, we have $1 \otimes x \in  \P \otimes \M \cap (\P \otimes \M)_{\phi \otimes \vphi}'$.
Take $y \in \bigcup_{\gamma \in \Gamma} \P_{\phi,\gamma}$. We have that $y^\star \otimes \pi_n(y) \in (\P\otimes\M)_{\phi\otimes\vphi}$. This means that $x$ commutes with $\pi_n(y)$.
Since the linear span of $\bigcup_{\gamma \in \gamma} \P_{\phi,\gamma}$ is $\star$-strongly dense in $\P$, we conclude that $x \in \M \cap \M' = \mathbb{C}1$.
\end{proof}

Also the following lemma is probably well known. It provides a criterion for the generalized Bernoulli action to be properly outer.

\begin{lemma}\label{lemma:bernoulli-outer}
Let $(P,\phi)$ be a nontrivial von Neumann algebra equipped with a normal faithful state $\phi$. Let $I$ be a countable set and $\al : I \recht I$ a permutation that moves infinitely many elements of $I$, i.e.~the set of $i \in I$ with $\al(i)\not=i$ is infinite. Then the induced automorphism $\rho \in \Aut(P^I,\phi^I)$ is properly outer: if $v \in P^I$ and $v x = \rho(x) v$ for all $x \in P^I$, then $v = 0$.

More generally, we have the following: if $\theta \in \Aut(P,\phi)$ is a state preserving automorphism with diagonal product $\theta^I \in \Aut(P^I,\phi^I)$, and if $v \in P^I$ such that $v \theta^I(x) = \rho(x) v$ for all $x \in P^I$, then $v=0$.
\end{lemma}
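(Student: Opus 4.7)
The plan is to combine an $L^2$-approximation of $v$ by elements of finite tensor products with an orthogonality computation coming from the Bernoulli tensor structure of $\phi^I$. The hypothesis that $\al$ moves infinitely many indices enters through the freedom to place a single position $i$ outside any prescribed finite subset of $I$.

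I would first fix $a \in P$ analytic for the modular group $\si^\phi$ with $\phi(a) = 0$ and $a \neq 0$, which exists since $P \neq \C$ and analytic mean-zero elements are $\|\fdot\|_2$-dense. Set $a' = \theta(a)$; because $\theta \circ \si^\phi_t = \si^\phi_t \circ \theta$, also $a'$ is analytic with $\phi(a') = 0$ and $\phi(a'^* a') = \phi(a^* a)$. Analyticity is needed so that both left and right multiplication by $\pi_i(a')$ extend to bounded operators on $L^2(P^I,\phi^I)$ with a constant depending only on $a$. The hypothesis applied to $x = \pi_i(a)$ then gives
\[
v\,\pi_i(a') \;=\; \pi_{\al(i)}(a)\,v \qquad \text{for every $i \in I$ with $\al(i) \neq i$.}
\]

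Fix $\epsilon > 0$ and let $E_F : P^I \to P^F$ denote the $\phi^I$-preserving conditional expectation. Choose a finite $F \subset I$ such that $w := v - E_F(v)$ satisfies $\|w\|_2 < \epsilon$. Because $\{i : \al(i) \neq i\}$ is infinite and $\al$ is a permutation, one can find $i$ with $\al(i) \neq i$ and $i,\al(i) \notin F$. Both $\pi_i(a')$ and $\pi_{\al(i)}(a)$ then commute with $E_F(v) \in P^F$, so substituting $v = E_F(v) + w$ into the intertwining relation and rearranging yields
\[
\pi_i(a')\,E_F(v) \;-\; E_F(v)\,\pi_{\al(i)}(a) \;=\; \pi_{\al(i)}(a)\,w \;-\; w\,\pi_i(a').
\]
By the left- and right-multiplication bounds coming from analyticity, the right-hand side has $L^2$-norm at most $C_0 \epsilon$ for a constant $C_0$ depending only on $a$ and $\theta$.

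The heart of the argument is to evaluate the left-hand side via the tensor factorization of $\phi^I$ across the disjoint positions $F$, $\{i\}$ and $\{\al(i)\}$. Since $i \notin F$, a direct computation gives $\|\pi_i(a') E_F(v)\|_2^2 = \phi(a^*a)\|E_F(v)\|_2^2$, and analogously $\|E_F(v) \pi_{\al(i)}(a)\|_2^2 = \phi(a^*a)\|E_F(v)\|_2^2$. The cross term equals $\phi^I(\pi_{\al(i)}(a^*)\,E_F(v)^* \pi_i(a') E_F(v))$, and the disjointness $\al(i) \notin F\cup\{i\}$ allows $\phi^I$ to factor off the scalar $\phi(a^*) = 0$, so the cross term vanishes. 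Hence $\|\pi_i(a') E_F(v) - E_F(v)\pi_{\al(i)}(a)\|_2^2 = 2\phi(a^*a)\|E_F(v)\|_2^2$, which combined with the estimate on the right-hand side forces $\|E_F(v)\|_2 \le C_1 \epsilon$ with $C_1$ independent of $\epsilon$. Therefore $\|v\|_2 \le (1 + C_1)\epsilon$, and letting $\epsilon \to 0$ gives $v = 0$. The delicate point is recognizing that the mean-zero hypothesis $\phi(a) = 0$ together with the disjointness of the three position sets forces the orthogonality of the two summands on the left-hand side; the rest is bookkeeping with conditional expectations.
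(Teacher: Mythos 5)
Your proof is correct and follows essentially the same route as the paper's: both arguments fix a mean-zero element $a$ whose left and right multiplications are bounded on $L^2$, approximate $v$ by a conditional expectation onto a finite tensor block, place $i$ and $\al(i)$ outside that block, and use the vanishing cross term coming from $\phi(a)=0$ and the product-state structure to force $\|E_F(v)\|_2$ to be small. The only cosmetic difference is that the paper runs the estimate as a contradiction with the fixed quantity $\delta = \tfrac12\|v\|_\vphi\|a\|_\phi$, whereas you let $\epsilon \to 0$; your explicit appeal to analytic elements to obtain the right-multiplication bound is a detail the paper leaves implicit.
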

\begin{proof}
We denote $\|x\|_\phi = \sqrt{\phi(x^\star x)}$ for all $x \in P$, and similarly for $x \in P^I$ using $\varphi = \phi^I$. Fix a nonzero element $a \in P$ with $\phi(a) = 0$ and such that both the left and the right multiplication by $a$ have norm less than $1$ on $L^2(P,\phi)$. For every $i \in I$, denote by $\pi_i : P \recht P^I$ the embedding in position $i$. Write $I$ as an increasing union of finite subsets $I_n \subset I$ and denote by $E_n$ the unique state preserving conditional expectation of $P^I$ onto $P^{I_n}$.

Assume that $v \in P^I$ is a nonzero element satisfying $v \theta^I(x) = \rho(x) v$ for all $x \in P^I$. Write $\delta = \frac{1}{2} \, \|v\|_\vphi \, \|a\|_\phi$ and note that $\delta > 0$. Take $n$ large enough such that $\|v - E_n(v)\|_\vphi < \delta$ and $\sqrt{2} \|E_n(v)\|_\vphi \geq \|v\|_\vphi$. Since $\al$ moves infinitely many elements of $I$, we can fix an $i \in I$ such that both $i$ and $\al(i)$ belong to $I \setminus I_n$ and $i \neq \al(i)$. Write $v_n = E_n(v)$.

Note that $\|v \pi_i(\theta(a)) - v_n \pi_i(\theta(a))\|_\vphi \leq \|v - v_n\|_\vphi < \delta$. We also have that $\|\pi_{\al(i)}(a) v - \pi_{\al(i)}(a) v_n\|_\vphi \leq \|v-v_n\|_\vphi < \delta$.
Observe that $\pi_i(\theta(a)) = \theta^I(\pi_i(a))$ and $\pi_{\al(i)}(a) = \rho(\pi_i(a))$. Since $v \theta^I(\pi_i(a)) = \rho(\pi_i(a)) v$, it follows that $\|v_n \pi_i(\theta(a)) - \pi_{\al(i)}(a) v_n\|_\vphi < 2\delta$. Since $\{i\}$, $\{\al(i)\}$ and $I_n$ are disjoint subsets of $I$ and since $\phi(a) = 0$, we get that
$$2\delta > \|v_n \pi_i(\theta(a)) - \pi_{\al(i)}(a) v_n\|_\vphi = \sqrt{2} \, \|v_n\|_\vphi \, \|a\|_\phi \geq \|v\|_\vphi \, \|a\|_\phi \; .$$
We reached the absurd conclusion that $2\delta > \|v\|_\vphi \, \|a\|_\phi$.
\end{proof}

Finally, the following lemmas provide a criterion for a generalized Bernoulli crossed product to be full.
We first recall the following terminology. An \emph{invariant mean} for an action of a countable group $\Lambda$ on a set $X$ is a finitely additive $\Lambda$-invariant probability measure on all the subsets of $X$. If $\pi : \Lambda \to \mathcal{U}(H)$ is a unitary representation, a sequence of unit vectors $\xi_n \in H$ is called \emph{almost invariant} if $\|\xi_n - \pi(g)(\xi_n)\|\to 0$ for all $g \in \Lambda$. Note that $\Lambda \curvearrowright X$ admits an invariant mean if and only if the unitary representation $\Lambda \curvearrowright \ell^2(X)$ admits almost invariant unit vectors. A unitary representation $\pi : \Lambda \to \mathcal{U}(H)$ is called \emph{amenable} if there exists a state $\omega$ on $B(H)$ such that $\omega(T) = \omega(\pi(g)T\pi(g^{-1}))$ for all $g \in \Lambda, T \in B(H)$, see \cite{bekka;amenable-unitary-representations}.

\begin{lemma}\label{lem.mean}
Let $(P,\phi)$ be a nontrivial von Neumann algebra equipped with a normal faithful state $\phi$. Let $\Lambda \actson I$ be an action of the countable group $\Lambda$ on the countable set $I$. The following statements are equivalent.
\begin{enumerate}[\upshape(i)]
\item The action $\Lambda \actson I$ admits an invariant mean.
\item The action of $\Lambda$ on the set of all nonempty finite subsets of $I$ admits an invariant mean.
\item The unitary representation $\Lambda \actson L^2((P,\phi)^I \ominus \C 1)$ admits almost invariant unit vectors.
\item The unitary representation $\Lambda \actson L^2((P,\phi)^I \ominus \C 1)$ is amenable.
\end{enumerate}
\end{lemma}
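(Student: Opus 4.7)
The plan is to establish $(i) \Leftrightarrow (ii)$ directly and to close the cycle $(i) \Rightarrow (iii) \Rightarrow (iv) \Rightarrow (i)$. The structural ingredient is the orthogonal Fock-space decomposition
\[
L^2\bigl((P,\phi)^I\bigr) \ominus \C 1 \;=\; \bigoplus_{F} H_F, \qquad H_F \cong \bigotimes_{i \in F} H_0,
\]
where $H_0 := L^2(P,\phi) \ominus \C 1$, the direct sum ranges over all nonempty finite subsets $F \subseteq I$, and $H_F$ is the closed linear span of tensors that are non-scalar exactly at the positions of $F$. The generalized Bernoulli action permutes these summands according to the induced action of $\Lambda$ on nonempty finite subsets of $I$. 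In particular, the one-particle sector $\bigoplus_{i \in I} H_{\{i\}}$ is $\Lambda$-equivariantly isomorphic to $H_0 \otimes \ell^2(I)$, with $\Lambda$ acting trivially on the first tensor factor and by the standard permutation representation on the second.

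The equivalence $(i) \Leftrightarrow (ii)$ is formal: a mean $m$ on $I$ pushes forward along the equivariant injection $i \mapsto \{i\}$ to a $\Lambda$-invariant mean on nonempty finite subsets of $I$, while a mean $\nu$ on finite subsets pulls back via $\mu(A) := \int (|A \cap F|/|F|) \, d\nu(F)$ to a mean on $I$. For $(i) \Rightarrow (iii)$, the nontriviality of $P$ supplies $a \in P$ with $\phi(a)=0$ and $\phi(a^\star a)=1$; writing $\pi_i(a) \in H_{\{i\}}$ for the insertion of $a$ at position $i$, the family $(\pi_i(a))_{i \in I}$ is orthonormal. Combined with F\o lner sets $F_n \subseteq I$ extracted from $(i)$ via Namioka's level-set argument, the unit vectors $\xi_n := |F_n|^{-1/2} \sum_{i \in F_n} \pi_i(a)$ satisfy $\|\pi(g)\xi_n - \xi_n\|^2 = |g F_n \triangle F_n|/|F_n| \to 0$. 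The implication $(iii) \Rightarrow (iv)$ is the standard remark that any weak-$\star$ cluster point of the vector states $T \mapsto \langle T \xi_n, \xi_n \rangle$ is a $\Lambda$-invariant state on $B\bigl(L^2((P,\phi)^I) \ominus \C 1\bigr)$.

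The main step is $(iv) \Rightarrow (i)$. Fixing any unit vector $\xi_0 \in H_0$, the assignment $\delta_i \mapsto \xi_0 \otimes \delta_i$ realises the permutation representation $\Lambda \actson \ell^2(I)$ as a $\Lambda$-invariant closed subspace of the one-particle sector inside $L^2((P,\phi)^I) \ominus \C 1$. Since subrepresentations of amenable unitary representations are amenable by \cite{bekka;amenable-unitary-representations}, amenability of the large representation forces $\Lambda \actson \ell^2(I)$ to be amenable, so a $\Lambda$-invariant state on $B(\ell^2(I))$ exists; restricting it to the diagonal subalgebra $\ell^\infty(I) \subset B(\ell^2(I))$ yields a $\Lambda$-invariant mean on $I$. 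The only nontrivial external ingredient is Bekka's stability of amenability under subrepresentations, and the subtle point is precisely the need to extract an honest subrepresentation isomorphic to $\ell^2(I)$, which is exactly what the factorisation $H_0 \otimes \ell^2(I)$ (with trivial first factor) of the one-particle sector provides.
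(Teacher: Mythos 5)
Your decomposition of $L^2((P,\phi)^I \ominus \C 1)$ into the sectors $H_F$, your proof of (i) $\Leftrightarrow$ (ii), and the implications (i) $\Rightarrow$ (iii) $\Rightarrow$ (iv) are all correct; the averaging formula $\mu(A) = \int |A\cap F|/|F|\, d\nu(F)$ is in fact a cleaner route from (ii) to (i) than the paper's (which factors a mean on finite subsets through the space of tuples of distinct elements). The gap is in (iv) $\Rightarrow$ (i): amenability of a unitary representation does \emph{not} pass to subrepresentations, and this is not what Bekka proves. The hereditary property in \cite{bekka;amenable-unitary-representations} goes the other way: if $\sigma$ is weakly contained in $\pi$ and $\sigma$ is amenable, then $\pi$ is amenable; in particular $\pi \oplus \sigma$ is amenable as soon as \emph{one} summand is. Taking $\Lambda = \F_2$ and $\pi = 1_\Lambda \oplus \lambda_\Lambda$ gives an amenable representation containing the nonamenable regular representation as a genuine subrepresentation. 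Concretely, in your setting the $\Ad$-invariant state $\omega$ on $B(H)$ may assign mass $0$ to the projection $p$ onto the one-particle sector (or onto your embedded copy of $\ell^2(I)$), in which case $T \mapsto \omega(pTp)$ is identically zero and yields no state on $B(\ell^2(I))$; when $\omega(p)>0$ one could renormalize, but nothing forces this.

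The repair is short and uses exactly the decomposition you already set up: instead of descending to the one-particle sector, use that the projections $p_F$ onto \emph{all} the sectors $H_F$ sum to $1$, so that $\Theta : \ell^\infty(J) \recht B(H)$, $\Theta(f) = \sum_{F \in J} f(F)\,p_F$ (with $J$ the set of nonempty finite subsets of $I$) is a \emph{unital}, positive, $\Lambda$-equivariant map. Composing the invariant state on $B(H)$ with $\Theta$ produces an invariant mean on $J$, i.e.\ proves (iv) $\Rightarrow$ (ii), and your own (ii) $\Rightarrow$ (i) then closes the cycle. This is the paper's argument, and it is precisely the reason statement (ii) appears in the lemma at all: the full representation only sees the action on finite subsets, not on $I$ itself, and the passage from $J$ back to $I$ must be done at the level of means.
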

\begin{proof}
We write $H = L^2((P,\phi)^I \ominus \C 1)$ and denote by $\rho : \Lambda \actson H$ the unitary representation given by the Bernoulli action. We denote by $J$ the set of all nonempty finite subsets of $I$.

(i) $\Rightarrow$ (iii). Fix $a \in P \ominus \C1$ with $\phi(a^\star a) = 1$. Let $\xi_n \in \ell^2(I)$ be a sequence of finitely supported unit vectors satisfying $\lim_n \|g \cdot \xi_n - \xi_n\|_2 = 0$ for all $g \in \Lambda$. For every $i \in I$, denote by $\pi_i : P \recht P^I$ the embedding as the $i$-th tensor factor. Define $\eta_n \in P^I$ given by
$\eta_n = \sum_{i \in I} \xi_n(i) \pi_i(a)$. Then $\eta_n$ is a sequence of almost invariant unit vectors in $H$.

(iii) $\Rightarrow$ (iv) is trivially true.

(iv) $\Rightarrow$ (ii). For every finite subset $\cF \subset I$, denote by $p_\cF$ the orthogonal projection of $H$ onto the closed linear span of $(P \ominus \C 1)^\cF$. Define the map $\Theta : \ell^\infty(J) \recht B(H)$ given by $\Theta(F) = \sum_{\cF \in J} F(\cF) p_\cF$. Since $\Theta(g \cdot F) = \rho(g) \Theta(F) \rho(g)^\star$, the composition of an $\Ad \rho(\Lambda)$-invariant mean on $B(H)$ with $\Theta$ gives a $\Lambda$-invariant mean on $J$.

(ii) $\Rightarrow$ (i). For every $k \geq 1$, define $V_k \subset I^k$ as the subset of $k$-tuples that consist of $k$ distinct elements of $I$. Put $V = \sqcup_{k \geq 1} V_k$. The formula $\theta(i_1,\ldots,i_k) = \{i_1,\ldots,i_k\}$ defines a finite-to-one $\Lambda$-equivariant map $V \recht J$. So also the action $\Lambda \actson V$ admits an invariant mean $m$. We push forward $m$ along the $\Lambda$-equivariant map $V \recht I$ given by $(i_1,\ldots,i_k) \mapsto i_1$ and find a $\Lambda$-invariant mean on $I$.
\end{proof}

Connes showed in \cite[Proposition 3.9 (b)]{connes;almost-periodic-states} that any Bernoulli crossed product $(\P,\phi)^{\F_2} \rtimes \F_2$ of $\F_2$ is a full factor.
In fact, any Bernoulli crossed product of a nonamenable group is a full factor, as we see now. For completeness, we provide a complete proof.
Recall that a bounded sequence $(x_n)_{n\in \mathbb{N}}$ in a von Neumann algebra $\M$ is called \emph{central} if for all $y \in \M$, $x_ny-yx_n \to 0$ $\star$-strongly.

\begin{lemma}\label{lemma.full-bernoulli}\label{lemma:fullfactor}
Let $(\P,\phi)$ be a nontrivial von Neumann algebra equipped with a normal faithful state. Let $\Lambda$ be a countable group acting on the countable set $I$, such that $\Lambda \curvearrowright I$ has no invariant mean, and consider the Bernoulli action $\Lambda \curvearrowright^\rho \P^I$. Denote by $\tau$ the trace on $L\Lambda$.
The following two statements are equivalent.
\begin{enumerate}[\upshape(i)]
\item Every $\|\cdot\|_\infty$-bounded central sequence $(y_n)_{n \in \mathbb{N}} \in L\Lambda$ satisfying $\|y_n - E_{L(\Stab i)}(y_n)\|_2 \to 0$ for every $i \in I$, must satisfy $\|y_n - \tau(y_n)1\|_2 \recht 0$.
\item $\P^I \rtimes \Lambda$ is a full factor.
\end{enumerate}
If these conditions hold, then $\tau(\P^I \rtimes \Lambda)$ is the weakest topology on $\mathbb{R}$ that makes the function $\sigma^{\phi} : \mathbb{R} \to \Aut(\P) : t \mapsto \sigma^{\phi}_t$ continuous.
\end{lemma}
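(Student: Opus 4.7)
The equivalence is split into the two directions, and the claim on $\tau(M)$ is handled at the end. For the easier direction (ii) $\Rightarrow$ (i), given a bounded central sequence $(y_n)$ in $L\Lambda$ satisfying $\|y_n - E_{L(\Stab i)}(y_n)\|_2 \to 0$ for every $i \in I$, the plan is to show that $(y_n)$ is already a central sequence in $M = P^I \rtimes \Lambda$. Centrality against $L\Lambda$ is built in. For commutation with $\pi_i(a)$, $a \in P$, the crucial point is that $E_{L(\Stab i)}(y_n) \in L(\Stab i) \subset M_\vphi$ commutes exactly with $\pi_i(a)$, since $\Stab i$ fixes the $i$-th tensor slot under the Bernoulli action. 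Writing $r_n = y_n - E_{L(\Stab i)}(y_n)$ and using $u_g \pi_i(a) = \pi_{g\cdot i}(a) u_g$ one computes that $\|[y_n, \pi_i(a)]\|_\vphi = \|[r_n, \pi_i(a)]\|_\vphi \leq 2\|a\|_\phi\,\|r_n\|_2 \to 0$. Boundedness of $(y_n)$ upgrades this to $\star$-strong asymptotic centrality on all of $M$. Fullness of $M$ then yields scalars $c_n$ with $y_n - c_n 1 \to 0$ $\star$-strongly, and applying $\tau$ identifies $c_n$ with $\tau(y_n)$ in the limit, giving $\|y_n - \tau(y_n)1\|_2 \to 0$.

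For the harder direction (i) $\Rightarrow$ (ii), take a bounded central sequence $(x_n)$ in $M$ and aim to show it is trivial. In \emph{Step A} (spectral gap) I would show $\|x_n - E_{L\Lambda}(x_n)\|_\vphi \to 0$, where $E_{L\Lambda} : M \to L\Lambda$ is the $\vphi$-preserving conditional expectation induced by $\phi^I$. Since $\Lambda \actson I$ has no invariant mean, \cref{lem.mean} gives spectral gap for $\Lambda \actson L^2((P,\phi)^I \ominus \C 1)$: there are $g_1,\dots,g_k \in \Lambda$ and $c > 0$ with $\|\eta\|^2 \leq c \sum_j \|\rho_{g_j}\eta - \eta\|^2$ for every $\eta$ in that space. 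Under the identification $L^2(M) \ominus L^2(L\Lambda) \cong (L^2(P^I)\ominus \C 1) \otimes \ell^2(\Lambda)$, the asymptotic commutation of $x_n$ with the unitaries $u_{g_j}$ controls the fiber norms of $\xi_n := (x_n - E_{L\Lambda}(x_n))\hat 1$, and a Popa-style spectral gap argument (cf.\ \cite{popa;superrigidity-malleable-actions-spectral-gap}) then forces $\|\xi_n\| \to 0$. In \emph{Step B}, set $y_n = E_{L\Lambda}(x_n)$, which by Step A is bounded central in $L\Lambda$. Fix $i \in I$ and pick $a \in P$ with $\phi(a) = 0$ and $\phi(a^\star a) > 0$, which exists by nontriviality of $P$. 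A direct computation using the Fourier expansion $y_n = \sum_g c_g^n u_g$ and $u_g \pi_i(a) = \pi_{g\cdot i}(a) u_g$ yields
\[
\|[\pi_i(a), y_n]\|_\vphi^2 \;=\; 2\|a\|_\phi^2 \sum_{g \notin \Stab i} |c_g^n|^2 \;=\; 2\|a\|_\phi^2\,\|y_n - E_{L(\Stab i)}(y_n)\|_2^2.
\]
The left-hand side vanishes by Step A together with centrality of $(x_n)$, so the hypothesis of (i) is verified for $(y_n)$. Applying (i) gives $\|y_n - \tau(y_n)1\|_2 \to 0$, and together with Step A this produces $x_n - \tau(y_n)1 \to 0$ $\star$-strongly, proving fullness.

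For the final claim, observe that $\sigma_t^{\vphi}$ acts as $\sigma_t^\phi$ on each $\pi_i(P)$ and trivially on $L\Lambda$, so continuity of $t \mapsto \sigma_t^\phi$ in the Polish topology on $\Aut P$ immediately implies continuity of $t \mapsto \pi(\sigma_t^{\vphi})$ in $\Out M$. Conversely, by fullness $\Inn M$ is closed in $\Aut M$, so any sequence $t_n \to 0$ with $\pi(\sigma_{t_n}^\vphi) \to \id$ in $\Out M$ can be corrected by inner automorphisms to a convergent sequence in $\Aut M$; combining the explicit tensor product description of $\sigma^\vphi$ with \cref{lemma:outer-on-core} then forces $\sigma_{t_n}^\phi \to \id$ in $\Aut P$. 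Hence $\tau(M)$ equals the weakest topology on $\mathbb{R}$ making $\sigma^\phi$ continuous.

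The main obstacle is clearly Step A: one has to transfer the spectral gap of the Bernoulli Koopman representation $\Lambda \actson L^2(P^I \ominus \C 1)$ to the whole vector $\xi_n$ living in $L^2(M) \ominus L^2(L\Lambda)$, while the accompanying conjugation action of $\Lambda$ on $\ell^2(\Lambda)$ does have invariant vectors and cannot be used on its own. Handling this cleanly requires feeding in both the asymptotic commutation of $(x_n)$ with the unitaries $u_g$ and with elements of $P^I$, and exploiting factoriality of the centralizer $(P^I)_{\phi^I}$ from \cref{lemma:centralizerfactor} to make the spectral gap machinery apply uniformly over the $\Lambda$-fibers.
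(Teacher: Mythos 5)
Your overall architecture matches the paper's: reduce a centralizing sequence to $L\Lambda$ by spectral gap, then use the stabilizer condition, and handle (ii)$\Rightarrow$(i) via Connes's characterization of fullness. But there are two genuine gaps. The first is exactly the Step~A that you flag as ``the main obstacle'' and do not resolve: you only invoke part (iii) of \cref{lem.mean} (almost invariant vectors for $\rho$ itself), and then worry, correctly, that $\rho \ot \Ad u$ cannot be handled ``on its own'' because $\Ad u$ has invariant vectors. The missing idea is part (iv) of \cref{lem.mean}: the absence of an invariant mean makes the Koopman representation $\rho$ on $L^2(P^I \ominus \C 1)$ \emph{nonamenable} in the sense of Bekka, and a nonamenable representation tensored with \emph{any} representation admits no almost invariant unit vectors (an almost invariant sequence for $\rho \ot \pi$ yields an $\Ad\rho(\Lambda)$-invariant state on $B(H)$). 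With that, only the asymptotic commutation of $x_n$ with the group unitaries $u_g$ is needed to kill $x_n - E_{L\Lambda}(x_n)$; the extra input you propose (commutation with $P^I$, factoriality of $(P^I)_{\phi^I}$) is neither needed nor sufficient as stated. The second gap is the $\tau$-invariant: after correcting by inner automorphisms to get $(\Ad u_n)\circ\si^\vphi_{t_n} \recht \id$, you must show the unitaries $u_n$ are asymptotically scalar before you can conclude $\si^\vphi_{t_n} \recht \id$. Fullness alone does not give this, because $u_n$ is not a central sequence but a \emph{twisted} one, satisfying $\|u_n \si^\vphi_{t_n}(a) - a u_n\|_\vphi^\sharp \recht 0$; and \cref{lemma:outer-on-core} is a statement about single automorphisms, not sequences. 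The paper resolves this by proving from the start the stronger, twisted statement: for any bounded $x_n$ and any reals $t_n$ with $\|x_n \si^\vphi_{t_n}(a) - a x_n\|_\vphi^\sharp \recht 0$ for all $a$, one has $x_n \recht \vphi(x_n)1$. The same spectral gap argument goes through verbatim in the twisted setting (the twist only affects the $P^I$-part, not the $u_g$-part used for spectral gap), but you have to set it up that way from the beginning.

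Two smaller points. In (ii)$\Rightarrow$(i), asymptotic commutation of $y_n$ with a dense subalgebra in the $\star$-strong sense does not by itself give the centralizing condition $\|\psi(y_n\fdot)-\psi(\fdot y_n)\|\recht 0$ needed to apply fullness; you must also use that $y_n \in L\Lambda \subset M_\vphi$ commutes exactly with the state $\vphi$ and then invoke \cite[2.8 and 3.1]{connes;almost-periodic-states}. And in both directions you should take the test element $a \in P \ominus \C 1$ with \emph{bounded right multiplication} on $L^2(P,\phi)$ (or work with the $\sharp$-norm throughout), since in the non-tracial case estimates like $\|(x_n-y_n)\pi_i(a)\|_\vphi \le \|a\|_\infty\|x_n-y_n\|_\vphi$ fail for general $a$. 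Your Fourier-coefficient identity relating $\|[\pi_i(a),y_n]\|_\vphi$ to $\|y_n - E_{L(\Stab i)}(y_n)\|_2$ is correct and is the same computation as in the paper.
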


Remark that condition (i) is trivially fulfilled if $I = \Lambda$ and $\Lambda$ acts on the left, since then $\Stab(\{e\})=\{e\}$.

\begin{proof}
Denote by $\varphi = \phi^I \circ E_{\P^I}$ the canonical state on $\P^I \rtimes \Lambda$. We write $\|x\|_\vphi = \vphi(x^\star x)^{\frac{1}{2}}$ and $\|x\|_\vphi^\sharp = \vphi(x^\star x + x x^\star)^{\frac{1}{2}}$. Assume that (i) holds. We start by proving the following statement:
\begin{equation}\label{eq.assum}
\begin{split}
& \text{if $x_n \in P^I \rtimes \Lambda$ is a bounded sequence and $t_n \in \RR$ is any sequence such that} \\
& \|x_n \sigma_{t_n}^\vphi(a) - a x_n\|_\vphi^\sharp \recht 0 \quad\text{for all}\;\; a \in P^I \rtimes \Lambda \; , \quad\text{then $\|x_n - \vphi(x_n)1\|_\vphi^\sharp \recht 0$.}
\end{split}
\end{equation}

Let $(x_n)$ and $(t_n)$ be sequences as in \eqref{eq.assum}. We denote by $(u_g)_{g \in \Lambda}$ the canonical unitaries in $P^I \rtimes \Lambda$. We write $H = L^2(P^I \ominus \C 1,\vphi)$ and denote by $\rho$ the unitary representation $\Lambda \actson H$ given by the generalized Bernoulli action. Identifying $L^2(P^I \rtimes \Lambda)$ with the direct sum of $H \ot \ell^2(\Lambda)$ and $\ell^2(\Lambda)$, the unitary representation $(\Ad u_g)_{g \in \Lambda}$ of $\Lambda$ on $L^2(P^I \rtimes \Lambda)$ becomes the direct sum of $(\rho_g \ot \Ad u_g)_{g \in \Lambda}$ and $(\Ad u_g)_{g \in \Lambda}$. Taking $a = u_g$ in \eqref{eq.assum}, we can view $(x_n)$ as a sequence of almost $(\Ad u_g)_{g \in \Lambda}$-invariant vectors in $L^2(P^I \rtimes \Lambda)$. By \cref{lem.mean}, the representation $\rho$ is nonamenable, so that the representation $(\rho_g \ot \Ad u_g)_{g \in \Lambda}$ does not admit almost invariant unit vectors. We conclude that $\|x_n - E_{L\Lambda}(x_n)\|_\vphi \recht 0$. We write $y_n = E_{L\Lambda}(x_n)$. Applying the previous argument to $x_n^\star$, we conclude that $\|x_n - y_n\|_\vphi^\sharp \recht 0$. Also, $y_n$ is a central sequence in $L\Lambda$.

Fix a nonzero element $b \in P \ominus \C1$ such that the right multiplication with $b$ is a bounded operator on $L^2(P,\phi)$. Fix $i \in I$ and denote by $\pi_i : P \recht P^I$ the embedding as the $i$-th tensor factor. Taking $a = \pi_i(b)$ in \eqref{eq.assum}, we get that
$$\|y_n \, \pi_i(\sigma_{t_n}^\phi(b)) - \pi_i(b) \, y_n\|_\vphi^\sharp \recht 0 \; .$$
A direct computation gives that
$$\sqrt{2} \; \|b\|_\phi^\sharp \; \|y_n - E_{L(\Stab i)}(y_n)\|_2 \leq \|y_n \, \pi_i(\sigma_{t_n}^\phi(b)) - \pi_i(b) \, y_n\|_\vphi^\sharp \; .$$
It thus follows that $\|y_n - E_{L(\Stab i)}(y_n)\|_2 \recht 0$ for every $i \in I$. Because (i) holds, we get that $\|y_n - \tau(y_n)1\|_2 \recht 0$. This concludes the proof of \eqref{eq.assum}.

Taking $t_n = 0$ in \eqref{eq.assum}, it follows in particular that $P^I \rtimes \Lambda$ is a full factor. Denote by $\tau$ the weakest topology on $\RR$ that makes the function $\sigma^{\phi} : \mathbb{R} \to \Aut(\P) : t \mapsto \sigma^{\phi}_t$ continuous. We prove that $\tau = \tau(P^I \rtimes \Lambda)$. Because $\Aut(P)$ and $\Out(P^I \rtimes \Lambda)$ are Polish groups, it suffices to prove that if $t_n \recht 0$ in $\tau(P^I \rtimes \Lambda)$, then also $t_n \recht 0$ in $\tau$. In that case, there is a sequence of unitaries $u_n \in P^I \rtimes \Lambda$ such that $(\Ad u_n) \circ \sigma_{t_n}^\vphi \recht \id$ in $\Aut(P^I \rtimes \Lambda)$. It follows from \eqref{eq.assum} that $\|u_n - \vphi(u_n)1\|_\vphi^\sharp \recht 0$. This means that $\Ad u_n \recht \id$ in $\Aut(P^I \rtimes \Lambda)$. Then also $\sigma_{t_n}^\vphi \recht \id$ in $\Aut(P^I \rtimes \Lambda)$. Restricting $\sigma_{t_n}^\vphi$ to one copy of $P$, it follows that $t_n \recht 0$ in $\tau$.

Conversely, assume that $P^I \rtimes \Lambda$ is full. We prove that (i) holds. Let $y_n \in L\Lambda$ be a bounded central sequence satisfying $\|y_n - E_{L(\Stab i)}(y_n)\|_2 \to 0$ for every $i \in I$. Denote by $P_0 \subset P$ the set of elements $b \in P$ such that the right multiplication with $b$ is a bounded operator on $L^2(P,\phi)$. Define $M_0$ as the linear span of all $P_0^\cF u_g$, $\cF \subset I$ finite and $g \in \Lambda$. It follows that $\|y_n a - a y_n\|_\vphi^\sharp \recht 0$ for all $a \in M_0$. Since moreover $y_n \vphi = \vphi y_n$ for all $n$, it follows from \cite[2.8 and 3.1]{connes;almost-periodic-states} that $\|y_n - \tau(y_n)1\|_2 \recht 0$.
\end{proof}

\section{Isomorphism results for type \texorpdfstring{$\III$}{III} Bernoulli crossed products}\label{section:isomorphism}

In \cite{ocneanu;actions-amenable-groups}, Ocneanu obtained a complete classification up to cocycle conjugacy of actions of amenable groups on the hyperfinite $\II_1$ or $\II_\infty$ factor. We deduce as a corollary of Ocneanu's work a classification result for actions of amenable groups on amenable factors equipped with an almost periodic state. We work with state preserving actions and look for state preserving cocycle conjugacies.

It is essential for us to work in a state preserving setting throughout. The reason is that in a second step, we develop a noncommutative version of Bowen's co-induction method \cite{bowen;orbit-equivalence-coinduced-actions-free-products} to pass from cocycle conjugacy of noncommutative Bernoulli $\Sigma$-actions to cocycle conjugacy of Bernoulli $\Sigma \ast \Upsilon$-actions. This method only works if the original cocycle conjugacy is state preserving. At the end of this section, we then find the `isomorphism part' of \cref{thm.A}.

\begin{theorem}\label{theorem:cocycleconjugateZ}\label{theorem:cocycleconjugateamenablecorner}
Let $(\M_0,\phi_0)$, $(\M_1,\phi_1)$ be nontrivial amenable factors, with normal faithful almost periodic states having factorial discrete decompositions. Let $\Sigma$ be a countably infinite amenable group, with state preserving actions $\Sigma \curvearrowright^{\alpha^i} (\M_i,\phi_i)$, for $i=0,1$. Assume that the restrictions of $\alpha^i$ to $(M_i)_{\phi_i}$ are outer.

The actions $\alpha^0$ and $\alpha^1$ are cocycle conjugate through a state preserving isomorphism if and only if the point spectra of $\Delta_{\phi_i}$ coincide.
\end{theorem}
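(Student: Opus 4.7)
The ``only if'' direction is immediate: a state preserving cocycle conjugacy respects the modular action, and hence the spectral decomposition of $\Delta_\phi$, so the two point spectra must coincide.

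For the converse, the plan is to pass to the discrete decomposition and invoke Ocneanu's classification of outer actions of amenable groups on the hyperfinite factor. Assume the point spectra of $\Delta_{\phi_0}$ and $\Delta_{\phi_1}$ generate the same group $\Gamma \subset \RR^+_0$ and set $G = \hat{\Gamma}$. Consider the cores $N_i = M_i \rtimes_{\sigma^i} G$, which are factors by the factorial discrete decomposition hypothesis. Since $M_i$ is amenable and $G$ is compact abelian, $N_i$ is amenable, and since it carries the trace scaling dual action $\hat\sigma^i$ of $\Gamma$, it is the hyperfinite $\II_\infty$ factor whenever $\Gamma \ne \{1\}$. (If $\Gamma = \{1\}$, then $\phi_i$ is already a trace, $N_i = M_i$ is the hyperfinite $\II_1$ factor, and the argument below specialises to Ocneanu in the tracial setting.)

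As recalled in \cref{section:statepreservingactionsonthecore}, the action $\alpha^i$ extends to an action $\tilde\alpha^i$ on $N_i$ that commutes with $\hat\sigma^i$, yielding an action $\beta^i$ of the countable amenable group $\Sigma \times \Gamma$ on $N_i$ via $\beta^i_{(s,\gamma)} = \tilde\alpha^i_s \circ \hat\sigma^i_\gamma$, with Connes-Takesaki module $(s,\gamma) \mapsto \hat\iota(\gamma)^{-1}$. The key step is the outerness of $\beta^i$. For $\gamma \ne e$, the automorphism $\beta^i_{(s,\gamma)}$ scales the trace by $\hat\iota(\gamma)^{-1} \ne 1$ and hence cannot be inner. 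For $\gamma = e$ and $s \ne e$, the outerness of the restriction of $\alpha^i_s$ to the centralizer $(M_i)_{\phi_i}$ combined with \cref{lemma:outer-on-core} gives that $\tilde\alpha^i_s = \beta^i_{(s,e)}$ is outer.

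With outerness in hand, Ocneanu's theorem \cite{ocneanu;actions-amenable-groups} applied to the two outer actions $\beta^0$ and $\beta^1$ of $\Sigma \times \Gamma$ on the hyperfinite $\II_\infty$ factor, which share the same module, furnishes a trace preserving isomorphism $\psi : N_0 \to N_1$ that is a cocycle conjugacy between $\beta^0$ and $\beta^1$. Since $\mod \psi = 1 \in \Gamma$, \cref{lemma:cocycleconjugatecore} then applies with $p = 1$ and directly yields the desired state preserving cocycle conjugacy between $\alpha^0$ and $\alpha^1$. The main technical input, and the most delicate point, is the $\II_\infty$ version of Ocneanu's classification for not necessarily trace preserving actions of amenable groups with prescribed module; once this external result is granted, the rest is a bookkeeping exercise in the discrete core formalism developed in \cref{section:almostperiodicweights} and \cref{section:statepreservingactionsonthecore}.
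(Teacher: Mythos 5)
Your proposal follows the same route as the paper: pass to the discrete core, form the $\Sigma\times\Gamma$ action $\beta^i$ on the hyperfinite $\II_\infty$ factor, verify outerness via \cref{lemma:outer-on-core} and the trace-scaling of the dual action, apply Ocneanu, and descend via \cref{lemma:cocycleconjugatecore}. The outerness argument and the reduction to the tracial case when $\Gamma=\{1\}$ are exactly right.

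There is, however, one genuine gap: you assert that Ocneanu's theorem ``furnishes a \emph{trace preserving} isomorphism $\psi : N_0 \to N_1$'' conjugating $\beta^0$ and $\beta^1$. Ocneanu's classification of outer actions with equal module on the hyperfinite $\II_\infty$ factor produces a cocycle conjugacy through \emph{some} isomorphism, but gives no control on its modulus: the conjugating map may scale the trace by an arbitrary $\kappa \in \RR^+_0$. This matters, because \cref{lemma:cocycleconjugatecore} only lets you take $p=1$ when $\mod\psi \in \Gamma$; for a general modulus you would only conclude that $\alpha^0$ is cocycle conjugate to a \emph{reduction} $(\alpha^1)^p$, which is weaker than the statement of the theorem. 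Composing with dual automorphisms $\hat\sigma^1_\gamma$ only adjusts the modulus by elements of $\hat\iota(\Gamma)$, so it cannot repair an arbitrary $\kappa$. The paper closes this gap by a stabilization trick: it applies Ocneanu twice, to get cocycle conjugacies $\psi_0 : N_0 \to N_1 \ovt R_\infty$ between $\beta^0$ and $\beta^1 \ot \id$ and $\psi_1 : N_1 \ovt R_\infty \to N_1$ between $\beta^1 \ot \id$ and $\beta^1$, and then inserts $\id \ot \theta^{-1}$ for an automorphism $\theta$ of $R_\infty$ with $\mod\theta = \mod\psi_0 \cdot \mod\psi_1$ (such $\theta$ exists since the trace-scaling group of $R_\infty$ is all of $\RR^+_0$, and $\id\ot\theta^{-1}$ commutes with $\beta^1\ot\id$). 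The composite $\psi_1 \circ (\id\ot\theta^{-1}) \circ \psi_0$ is then a trace preserving cocycle conjugacy, and only at that point does \cref{lemma:cocycleconjugatecore} apply with $p=1$. You should add this modulus-correction step; the rest of your argument stands.
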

\begin{proof}
One implication being obvious, assume that the point spectra of $\Delta_{\phi_i}$ coincide. Denote by $G$ the canonically associated compact group with the modular automorphism groups $(\si^i_s)_{s \in G}$. When $G=\{1\}$, the $\phi_i$ are traces and the theorem is exactly \cite[Corollary 1.4]{ocneanu;actions-amenable-groups}. So we may assume that the $\phi_i$ are not traces. Since the actions $\al^i$ are state preserving, they canonically extend to actions $\altil^i$ of $\Sigma$ on $N_i = M_i \rtimes_{\si^i} G$. Note that the action $\altil^i$ commutes with the dual action $(\hat{\si}_\gamma)_{\gamma \in \Gamma}$ of $\Gamma = \hat{G}$. Both combine into an action $(\beta^i_{(\sigma,\gamma)})_{(\sigma,\gamma) \in \Sigma \times \Gamma}$ on the hyperfinite $\II_\infty$ factor $N_i$.

We want to apply \cite[Theorem 2.9]{ocneanu;actions-amenable-groups} to get that $\beta^0$ and $\beta^1$ are cocycle conjugate. For this, it suffices to check that the $\beta^i$ are outer actions that scale the trace in the same way. The latter follows because $\mod \beta^i_{(\sigma,\gamma)} = \hat{\iota}(\gamma)^{-1}$. To prove the former, assume that $\beta^i_{(\sigma,\gamma)}$ is inner. Since this automorphism scales the trace with the factor $\hat{\iota}(\gamma)^{-1}$, we get that $\gamma = 1$. So, $\altil^i_\sigma$ is inner. It then follows from \cref{lemma:outer-on-core} that the restriction of $\al^i_\sigma$ to $(M_i)_{\phi_i}$ is inner. Therefore, $\sigma = e$.

We claim that the actions $\beta^i$ are cocycle conjugate through a trace preserving isomorphism. To prove this claim, denote by $R_\infty$ the hyperfinite $\II_\infty$ factor. Also the action $\beta^1 \ot \id$ on $N_1 \ovt R_\infty$ is outer and scales the trace in the same way as the actions $\beta^i$. Applying twice \cite[Theorem 2.9]{ocneanu;actions-amenable-groups}, we find a cocycle conjugacy $\psi_0 : N_0 \recht N_1 \ovt R_\infty$ between $\beta^0$ and $\beta^1 \ot \id$, as well as a cocycle conjugacy $\psi_1 : N_1 \ovt R_\infty \recht N_1$ between $\beta^1 \ot \id$ and $\beta^1$. Choosing an automorphism $\theta \in \Aut(R_\infty)$ with $\mod \theta = \mod \psi_0 \cdot \mod \psi_1$, it follows that $\psi = \psi_1 \circ (\id \ot \theta^{-1}) \circ \psi_0$ is a trace preserving cocycle conjugacy between $\beta^0$ and $\beta^1$. The conclusion follows from \cref{lemma:cocycleconjugatecore}.
\end{proof}

We now recall the construction of co-induced actions, which we use to lift the result of \cref{theorem:cocycleconjugateZ} to Bernoulli actions of free product groups.
Let $\Sigma \curvearrowright^\alpha (\M,\phi)$ be a countable group acting on a von Neumann algebra by state preserving automorphisms. Assume that $\Sigma$ is a subgroup of a countable discrete group $\Lambda$.
Choose a map $r : \Lambda \to \Sigma$ such that $r(g \sigma) = r(g) \sigma$ for all $g \in \Lambda, \sigma \in \Sigma$ and such that $r(e)=e$. We have the associated $1$-cocycle $\Omega : \Lambda \times \Lambda/\Sigma \to \Sigma$ for the left action of $\Lambda$ on $\Lambda/\Sigma$, given by $\Omega(g, h\Sigma) =r(gh)r(h)^{-1}$ for all $g,h \in \Lambda$.
Let $\M^{\Lambda / \Sigma}$ be the tensor product of $\M$ indexed by $\Lambda/\Sigma$, with respect to the state $\phi$, and denote by $\pi_{h\Sigma} : \M \to \M^{\Lambda / \Sigma}$ the embedding of $\M$ at position $h\Sigma$, for $h \in \Lambda$.
The formula
\begin{align*}
\Lambda \curvearrowright^\beta \M^{\Lambda/\Sigma} \text{ where } \beta_g\big(\pi_{h \Sigma}(a)\big) = \pi_{gh\Sigma}\big( \alpha_{\Omega(g,h\Sigma)}(a)\big), \quad a \in \M,\: g,h \in \Lambda,
\end{align*}
yields a well defined state preserving action of $\Lambda$ on the tensor product $ (\M^{\Lambda/\Sigma }, \phi^{\Lambda/\Sigma})$, called the \emph{co-induced action} of $\Sigma \curvearrowright (\M,\phi)$ to $\Lambda$.

The following proposition provides a variant of Bowen's co-induction argument in \cite{bowen;orbit-equivalence-coinduced-actions-free-products}, with a proof in the spirit of \cite{meesschaert-raum-vaes}.

\begin{proposition}
Let $\Sigma$ and $\Upsilon$ be countable groups, and $(\M_0,\phi_0), (\M_1,\phi_1)$ be von Neumann algebras equipped with normal faithful states. Assume that $\Sigma \curvearrowright^{\alpha^i} (\M_i,\phi_i)$ are state preserving actions, for $i=0,1$. Denote $\Lambda = \Sigma \ast \Upsilon$, and let $\Lambda \curvearrowright^{\beta^i} \M_i^{\Lambda / \Sigma}$ be the co-induced actions.

If the actions $\alpha^i$ are cocycle conjugate through a state preserving isomorphism, then also the co-induced actions $\beta^i$ are cocycle conjugate through a state preserving isomorphism.
\label{proposition:coinduction}
\end{proposition}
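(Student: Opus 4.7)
The plan is to construct the cocycle conjugacy on the co-induced side directly from the given one. Let $\theta : (M_0, \phi_0) \to (M_1, \phi_1)$ be the given state-preserving isomorphism and $u : \Sigma \to \cU((M_1)_{\phi_1})$ the 1-cocycle such that
$$\theta \circ \alpha^0_\sigma = \Ad(u_\sigma) \circ \alpha^1_\sigma \circ \theta \qquad\text{and}\qquad u_{\sigma\tau} = u_\sigma\, \alpha^1_\sigma(u_\tau).$$
The natural candidates are the state-preserving infinite tensor product $\Theta = \bigootimes_{h\Sigma \in \Lambda/\Sigma} \theta : M_0^{\Lambda/\Sigma} \to M_1^{\Lambda/\Sigma}$, together with the formal $\beta^1$-cocycle
$$V_g \;=\; \prod_{h\Sigma \in \Lambda/\Sigma} \pi_{gh\Sigma}\bigl(u_{\Omega(g, h\Sigma)}\bigr), \qquad g \in \Lambda.$$

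The main obstacle is that this product must actually make sense, i.e.~only finitely many factors should differ from~$1$. This is where the free product structure $\Lambda = \Sigma \ast \Upsilon$ enters. I would choose the section $r : \Lambda \to \Sigma$ compatibly with the reduced-word normal form: the coset representatives of $\Lambda/\Sigma$ are either $e$ or reduced words whose last letter lies in $\Upsilon \setminus \{e\}$. A short case analysis on the reduced forms of $g$ and of a coset representative $h$ then shows that when reducing $gh$, the cancellation at the junction either leaves the last $\Upsilon$-letter of $h$ intact -- in which case $gh$ is itself the chosen representative of $gh\Sigma$ and $\Omega(g, h\Sigma) = e$ -- or else it completely consumes $h$ together with a suffix of $g$, which can happen for at most $|g|$ many $h$. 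Thus $\{h\Sigma : \Omega(g, h\Sigma) \neq e\}$ is finite for every $g \in \Lambda$, and $V_g$ is a finite product of unitaries lying in pairwise commuting tensor factors, all inside the centralizer; it is therefore a well-defined element of $\cU\bigl((M_1^{\Lambda/\Sigma})_{\phi_1^{\Lambda/\Sigma}}\bigr)$.

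The remaining verifications are routine. The cocycle identity $\Omega(gh, k\Sigma) = \Omega(g, hk\Sigma)\, \Omega(h, k\Sigma)$ follows immediately from $\Omega(g, h\Sigma) = r(gh)\,r(h)^{-1}$, and combining it with the 1-cocycle relation for $u$ gives, tensor-factor by tensor-factor, that $V_{gh} = V_g\, \beta^1_g(V_h)$ and that $V_e = 1$. Finally, the intertwining relation $\Theta \circ \beta^0_g = \Ad(V_g) \circ \beta^1_g \circ \Theta$ is checked on generators $\pi_{h\Sigma}(a)$: both sides produce an element supported in the single tensor factor indexed by $gh\Sigma$, so the identity reduces there to
$$\theta \circ \alpha^0_{\Omega(g,h\Sigma)} \;=\; \Ad\bigl(u_{\Omega(g,h\Sigma)}\bigr) \circ \alpha^1_{\Omega(g,h\Sigma)} \circ \theta,$$
which holds by hypothesis; note that the other tensor factors of $V_g$ commute with $\beta^1_g(\Theta(\pi_{h\Sigma}(a)))$ and drop out. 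Since $\Theta$ is state-preserving by construction, this produces the desired state-preserving cocycle conjugacy between $\beta^0$ and $\beta^1$.
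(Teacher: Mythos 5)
Your proof is correct and follows essentially the same route as the paper: both exploit the reduced-word transversal for $\Sigma < \Sigma \ast \Upsilon$ to see that the co-induced $1$-cocycle touches only finitely many tensor factors, and the unitaries $V_g$ you construct coincide with the paper's cocycle $W_g$. The only difference is one of packaging: the paper computes $\Omega$ only on the generating subgroups, where it is trivial except at the coset $e\Sigma$, defines the cocycle there ($W_\sigma = \pi_{e\Sigma}(u_\sigma)$, $W_\upsilon = 1$) and extends it by the universal property of the free product, checking the intertwining relation on generators only~--- which sidesteps your (correct, but slightly more laborious) reduced-word analysis of $\Omega(g,\cdot)$ for arbitrary $g\in\Lambda$.
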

\begin{proof}
Consider two cocycle conjugate actions $\alpha^i$ of $\Sigma$ and their co-inductions $\beta^i$ to $\Lambda = \Sigma \ast \Upsilon$, as in the statement of the theorem.
Let $\psi : \M_0 \to \M_1$ be a state preserving isomorphism and $(w_\sigma)_{\sigma \in \Sigma}$ be a 1-cocycle for $\alpha^1$ such that
\begin{align*}
\psi \circ \alpha^0_\sigma &= \Ad w_\sigma \circ \alpha^1_\sigma \circ \psi,\qquad\forall \sigma \in \Sigma.
\end{align*}
Consider the action $\tilde\alpha^0$ of $\Sigma$ on $\M_1$ given by
$\tilde\alpha^0_\sigma = \psi \circ \alpha^0_\sigma \circ \psi^{-1}$, for $\sigma \in \Sigma$.
The infinite tensor product $\psi^{\Lambda/\Sigma}$ is a state preserving conjugacy between the co-induction of $\alpha^0$ and the co-induction of $\tilde\alpha^0$. So, it is enough to show that the co-induced actions of $\tilde\alpha^0$ and $\alpha^1$ to $\Lambda$ are cocycle conjugate through a state preserving isomorphism. This allows us to simplify the notations: we put $\M = \M_1$, $\alpha = \alpha^1$ and $\tilde\alpha = \tilde\alpha^0$. We then have
\begin{align*}
\tilde\alpha_\sigma = \Ad w_{\sigma} \circ \alpha_\sigma, \qquad \sigma \in \Sigma.
\end{align*}

Take $\mathcal{I} \subset \Lambda = \Sigma \ast \Upsilon$ to be the left transversal of $\Sigma < \Lambda$ consisting of the trivial word $e$ and the words ending in a letter of $\Upsilon -\{e\}$. Consider the map $r : \Lambda \to \Sigma$ defined by $r(g\sigma) = \sigma$ whenever $g \in \mathcal{I}$ and $\sigma \in \Sigma$, and let $\Omega : \Lambda \times \Lambda/\Sigma \to \Sigma$, $\Omega(g,h\Sigma) = r(gh)r(h)^{-1}$ be the associated $1$-cocycle. Denote by $\beta$ and $\tilde\beta$ the co-induced actions to $\Lambda$ of $\alpha$ and $\tilde\alpha$ respectively, with respect to $\Omega$.
Remark that for $\sigma \in \Sigma$, $\upsilon \in \Upsilon$, $h \in \mathcal{I} - \{e\}$,
\begin{align*}
\Omega(\sigma, h\Sigma) &= e \qquad \text{because $\sigma (\mathcal{I} - \{e\}) \subset \mathcal{I} - \{e\}$},\\
\Omega(\sigma, e\Sigma) &= \sigma, &\\
\Omega(\upsilon, h\Sigma) &= e \qquad \text{because $\upsilon \mathcal{I} \subset \mathcal{I}$},\\
\Omega(\upsilon, e\Sigma) &= e.
\end{align*}

Since $\Lambda = \Sigma \ast \Upsilon$ is the free product of $\Upsilon$ and $\Sigma$, there is a unique 1-cocycle $W_g \in \M^{\Lambda/\Sigma}$, $g \in \Lambda$ for $\beta$ given by
\begin{align*}
W_\sigma &= \pi_{e\Sigma}(w_\sigma), \quad \sigma \in \Sigma,&
W_\upsilon &= 1, \quad \upsilon \in \Upsilon.
\end{align*}
We claim that $\tilde\beta_g = \Ad W_{g} \circ \beta_g$ for all $g \in \Lambda$, from which the lemma follows. It suffices to prove that this equation holds for all elements of $\Sigma$ and $\Upsilon$.
Let $\sigma \in \Sigma$, then, for $h \in \mathcal{I} - \{e\}$, $a \in \M$,
\begin{align*}
\tilde\beta_\sigma \big(\pi_{h \Sigma}(a)\big) &= \pi_{\sigma h\Sigma}\big(\tilde\alpha_{e}(a)\big) = \pi_{\sigma h \Sigma}(a),\\
\Ad W_\sigma \circ \beta_\sigma \big(\pi_{h \Sigma}(a)\big) &= \pi_{e\Sigma}(w_\sigma)\pi_{\sigma h \Sigma}\big(\alpha_{e}(a)\big)\pi_{e\Sigma}(w_\sigma^\star) = \pi_{\sigma h \Sigma}(a),
\end{align*}
and also
\begin{align*}
\tilde\beta_\sigma\big(\pi_{e\Sigma}(a)\big)
= \pi_{e\Sigma}\big(\tilde\alpha_\sigma(a)\big)
= \pi_{e\Sigma}\big(w_\sigma \alpha_\sigma(a) w_\sigma^\star\big)
= W_\sigma \pi_{e\Sigma}\big(\alpha_{\sigma}(a)\big) W_\sigma^\star
= \Ad W_\sigma \circ \beta_\sigma\big(\pi_{e\Sigma}(a)\big),
\end{align*}
which means that $\tilde\beta_\sigma(x) =\Ad W_\sigma \circ \beta_\sigma(x)$ for all $x \in \M^{\Lambda/\Sigma}$.
Let now $\upsilon \in \Upsilon$, and $h \in \mathcal{I}$, $a \in \M$, then
\begin{align*}
\tilde\beta_\upsilon\big(\pi_{h \Sigma}(a)\big) &= \pi_{\upsilon h \Sigma}\big( \tilde\alpha_e(a)\big)
  = \pi_{\upsilon h \Sigma}\big( a\big)
 = \pi_{\upsilon h \Sigma}\big( \alpha_e(a)\big)
  = \Ad W_\upsilon \circ \beta_\upsilon\big(\pi_{h\Sigma}(a)\big),
\end{align*}
so $\tilde\beta_\upsilon = \Ad W_\upsilon \circ \beta_\upsilon$ as well.
\end{proof}

The previous two results now yield the following theorem, providing the `isomorphism part' of \cref{thm.A}.

\begin{theorem}\label{thm:cocycle-conjugacy-free-products}
Suppose that $(\P_0,\phi_0)$ and $(\P_1,\phi_1)$ are two amenable factors, equipped with normal faithful almost periodic states $\phi_0, \phi_1$. Let $\Sigma$ be a countably infinite amenable group, and $\Upsilon$ any countable group.
The following two statements are equivalent.
\begin{enumerate}[\upshape(i)]
\item The point spectra of $\Delta_{\phi_0}$ and $\Delta_{\phi_1}$ generate the same subgroup of $\mathbb{R}^+_0$.
\item With $\Lambda = \Sigma \ast \Upsilon$, the Bernoulli actions $\Lambda \curvearrowright (P_0,\phi_0)^\Lambda$ and $\Lambda \curvearrowright (P_1,\phi_1)^\Lambda$ are cocycle conjugate through a state preserving isomorphism.
\end{enumerate}
\label{theorem:cocycleconjugateFn}
\end{theorem}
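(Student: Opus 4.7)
The direction (ii) $\Rightarrow$ (i) is the easy one. Any state preserving $\ast$-isomorphism $(\P_0^\Lambda, \phi_0^\Lambda) \to (\P_1^\Lambda, \phi_1^\Lambda)$ intertwines the modular operators, so the point spectra of $\Delta_{\phi_i^\Lambda}$ coincide. For an infinite tensor product, the point spectrum of the modular operator of $\phi_i^\Lambda$ equals the subgroup of $\RR^+_0$ generated by $\pntspectrum{\phi_i}$, since $1$ always lies in $\pntspectrum{\phi_i}$ and only finitely many tensor factors may carry a nontrivial eigenvalue. Therefore $\pntspectrum{\phi_0}$ and $\pntspectrum{\phi_1}$ generate the same subgroup, which is~(i).

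For (i) $\Rightarrow$ (ii), the plan is to first establish a state preserving cocycle conjugacy between the Bernoulli actions of the amenable group $\Sigma$ on the $\P_i^\Sigma$ by invoking \cref{theorem:cocycleconjugateZ}, and then to co-induce to $\Lambda = \Sigma \ast \Upsilon$ using \cref{proposition:coinduction}.

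To apply \cref{theorem:cocycleconjugateZ} to $\Sigma \actson (\P_i^\Sigma, \phi_i^\Sigma)$, I must check three things: amenability (clear, as $\P_i$ is amenable), a factorial discrete decomposition (given by \cref{lemma:centralizerfactor}), and outerness of the restricted Bernoulli action on the centralizer. For the last point, I would use \cref{lemma:outer-on-core}: the restriction of $\alpha_\sigma$ to $(\P_i^\Sigma)_{\phi_i^\Sigma}$ is inner only if $\alpha_\sigma = \Ad v \circ \sigma_s^{\phi_i^\Sigma}$ for some unitary $v$ in the centralizer and some $s$ in the compact dual group $G$. Since $\sigma_s^{\phi_i^\Sigma}$ acts diagonally as $(\sigma_s^{\phi_i})^\Sigma$, this rearranges into $v\,\theta^\Sigma(x) = \alpha_\sigma(x)\,v$ for all $x$, with $\theta = \sigma_s^{\phi_i}$. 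The ``more generally'' clause of \cref{lemma:bernoulli-outer}, applied to the shift permutation $\sigma \neq e$ of $\Sigma$, then forces $v=0$, which is impossible. Combined with the discussion of tensor product point spectra above, hypothesis~(i) ensures that the point spectra of $\Delta_{\phi_i^\Sigma}$ coincide, so \cref{theorem:cocycleconjugateZ} delivers the desired state preserving cocycle conjugacy between the Bernoulli $\Sigma$-actions on $\P_0^\Sigma$ and $\P_1^\Sigma$.

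The last step is to pass from $\Sigma$ to $\Lambda$. Applying \cref{proposition:coinduction} yields a state preserving cocycle conjugacy between the co-inductions to $\Lambda$ of the two Bernoulli $\Sigma$-actions. Choosing a left transversal $\cI \subset \Lambda$ for $\Sigma < \Lambda$ with $e \in \cI$, there is a canonical state preserving isomorphism $(\P^\Sigma)^{\Lambda/\Sigma} \cong \P^\Lambda$ sending $\pi_{h\Sigma} \circ \pi^\Sigma_\tau$ to $\pi_{h\tau}$ for $h \in \cI$ and $\tau \in \Sigma$. A direct computation using $\Omega(g, h\Sigma) = r(gh)$ (with $r(h)=e$ for $h \in \cI$) shows that this isomorphism intertwines the co-induced action with the Bernoulli $\Lambda$-action on $\P^\Lambda$, which completes the argument. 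I expect the only nonroutine bookkeeping point to be the outerness verification on the centralizer; the rest is a direct assembly of the previously established ingredients.
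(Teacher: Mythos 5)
Your proposal is correct and follows essentially the same route as the paper: reduce to the Bernoulli $\Sigma$-action via \cref{theorem:cocycleconjugateamenablecorner} (checking the hypotheses with \cref{lemma:centralizerfactor} and the combination of \cref{lemma:outer-on-core} with the ``more generally'' clause of \cref{lemma:bernoulli-outer}), then co-induce to $\Lambda = \Sigma \ast \Upsilon$ via \cref{proposition:coinduction} using that the co-induction of the Bernoulli $\Sigma$-action is the Bernoulli $\Lambda$-action. Your spelled-out verification of outerness on the centralizer is exactly the argument the paper compresses into one sentence.
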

\begin{proof}
If (ii) holds, it follows that $\Delta_{\phi_0^\Lambda}$ and $\Delta_{\phi_1^\Lambda}$ have the same point spectrum, which exactly means that (i) holds.
We prove the implication (i) $\Rightarrow$ (ii).

Denote by $\rho^i$ the Bernoulli action of $\Sigma$ on $\P_i^\Sigma$, for $i=0,1$.
The factors $\P_0^\Sigma$, $\P_1^\Sigma$ are amenable, and they have normal faithful almost periodic states and factorial discrete decompositions, by \cref{lemma:centralizerfactor}. Moreover, the point spectra of $\Delta_{\phi_0^\Sigma}$, $\Delta_{\phi_1^\Sigma}$ form the same subgroup of $\mathbb{R}_0^+$. Combining lemmas \ref{lemma:outer-on-core} and \ref{lemma:bernoulli-outer},
the Bernoulli actions of $\Sigma$ on $\P_i^\Sigma$ are state preserving and outer when restricted to the centralizer of $\phi_i^\Sigma$. By \cref{theorem:cocycleconjugateamenablecorner}, the Bernoulli actions of $\Sigma$ on $\P_0^\Sigma$ and $\P_1^\Sigma$ are cocycle conjugate through a state preserving isomorphism.

Note that whenever $\Sigma < \Lambda$ is a subgroup, the co-induced action of the Bernoulli action $\Sigma \curvearrowright (\P_i, \phi_i)^{\Sigma}$ to $\Lambda$ is exactly the Bernoulli action $\Lambda \curvearrowright (\P_i,\phi_i)^{\Lambda}$. By \cref{proposition:coinduction}, (ii) follows.
\end{proof}

\section{\boldmath Uniqueness of regular amenable subalgebras and a class $\cC$ of groups}\label{section:regular-subalgebras}

We first recall from \cite[Theorem 2.1 and Corollary 2.3]{popa;strong-rigidity-malleable-actions-I} Popa's theory of intertwining-by-bimodules.

Let $(\M,\tau)$ be any tracial von Neumann algebra, and let $\P \subset 1_\P \M 1_\P$ and $\Q \subset 1_\Q \M 1_\Q$ be von Neumann subalgebras. We write $\P \prec_\M \Q$ if there exist a projection $p \in M_n(\C) \ot Q$, a normal unital $\star$-homomorphism $\theta : P \recht p(M_n(\C) \ot Q)p$ and a non-zero partial isometry $v \in M_{1,n}(\C) \ot 1_P M 1_Q$ satisfying $a v = v \theta(a)$ for all $a \in P$.

For every von Neumann subalgebra $P\subset M$, we consider the \emph{normaliser} $\mathcal{N}_M(P) = \{u \in \mathcal{U}(M) \mid uPu^\star = P\}$. When $\mathcal{N}_M(P)\dpr = M$, we say that $P \subset M$ is \emph{regular}. When moreover $P \subset M$ is maximal abelian, we call $P \subset M$ a Cartan subalgebra. Finally, when $P$ and $M$ are $\II_1$ factors, we say that $P \subset M$ is an irreducible subfactor if $P' \cap M = \C 1$.

We will use the notion of intertwining-by-bimodules in combination with the following lemma.

\begin{lemma}[\citep{Lemma 8.4}{ioana-peterson-popa;amalgamated-free-product}] \label{lemma:weakcontainmentisunitaryconjug}
Let $\M$ be a $\II_1$ factor and $\P_0,\P_1 \subset \M$ regular, irreducible subfactors. Assume that the groups $\frac{\mathcal{N}_\M(\P_i)}{\mathcal{U}(\P_i)}$ are icc.
Suppose that
\begin{align*}
\P_0 \prec \P_1 \quad \text{ and } \quad \P_1 \prec \P_0.
\end{align*}
Then there exists a unitary $u \in \mathcal{U}(\M)$ such that $u\P_0u^\star = \P_1$.
\end{lemma}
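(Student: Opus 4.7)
The plan is to upgrade the two intertwining conditions $P_0 \prec P_1$ and $P_1 \prec P_0$ to a genuine unitary conjugacy $uP_0u^* = P_1$ via three steps: produce a unitary intertwiner from each side using irreducibility, compose them into a finite-index endomorphism of $P_0$ implemented by inner conjugation, and use the icc hypothesis on $\mathcal{N}_M(P_i)/\mathcal{U}(P_i)$ to force this endomorphism to be an automorphism.

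First, applying Popa's intertwining-by-bimodules theorem to $P_0 \prec_M P_1$, I obtain a non-zero partial isometry $v \in M_{1,n}(\C) \otimes M$, a projection $p \in M_n(\C) \otimes P_1$, and a normal unital $\star$-homomorphism $\theta : P_0 \to p(M_n(\C) \otimes P_1)p$ satisfying $av = v\theta(a)$ for all $a \in P_0$. A direct computation shows $vv^* \in P_0' \cap M$, which equals $\C 1$ by irreducibility of $P_0$; a minimization argument on the support of $v$ reduces us to $vv^* = 1$, so that (after a standard amplification trick using that $P_1$ is a $\II_1$ factor and absorbing $M_n(\C)$ into a corner) we obtain a unitary $u_0 \in \mathcal{U}(M)$ with $u_0 P_0 u_0^* \subset P_1$, realizing $P_0$ as a finite-Jones-index subfactor of $P_1$. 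Applying the symmetric argument to $P_1 \prec_M P_0$ produces a unitary $u_1 \in \mathcal{U}(M)$ with $u_1 P_1 u_1^* \subset P_0$, likewise of finite index.

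Second, set $w := u_1 u_0 \in \mathcal{U}(M)$, so that $w P_0 w^* \subset P_0$ is a finite-index subfactor of $P_0$ of index $\lambda := [P_1 : u_0 P_0 u_0^*] \cdot [P_0 : u_1 P_1 u_1^*]$. Once we show $\lambda = 1$, we get $w P_0 w^* = P_0$ so $w \in \mathcal{N}_M(P_0)$, and then $u_0 P_0 u_0^* = u_1^{-1} w P_0 w^{-1} u_1 = u_1^{-1} P_0 u_1 \supset P_1$, which combined with $u_0 P_0 u_0^* \subset P_1$ gives the desired equality.

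\textbf{Main obstacle.} The crucial step is proving $\lambda = 1$, which is where all three hypotheses (regularity of the $P_i$, their irreducibility, and the icc condition on the quotients $\mathcal{N}_M(P_i)/\mathcal{U}(P_i)$) must be combined. The plan is to argue by contradiction: if $w P_0 w^* \subsetneq P_0$ is a proper finite-index subfactor, then for every $x \in \mathcal{N}_M(P_0)$ the subalgebra $(xw)P_0(xw)^* = x (wP_0 w^*) x^* \subset P_0$ is again a finite-index subfactor of the same index $\lambda$. Since $P_0$ is regular, this produces a family of such subfactors indexed by a coset space of $\mathcal{N}_M(P_0)$, and the icc condition on $\mathcal{N}_M(P_0)/\mathcal{U}(P_0)$ ensures that this family has infinitely many distinct members (the stabilizer of $wP_0w^*$ gives a finite-index subgroup of the icc quotient, which must be all of it — contradicting properness). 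Equivalently, one tracks the Jones projections $e_{xwP_0w^*x^*}$ inside the basic construction $\langle M, e_{P_0}\rangle$; the infinite orbit under $\mathcal{N}_M(P_0)$-conjugation would yield an infinite orthogonal family of projections each of trace $\lambda^{-1} > 0$, which is impossible. Hence $\lambda = 1$, completing the argument.
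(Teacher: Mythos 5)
The paper gives no proof of this lemma: it is imported verbatim from Ioana--Peterson--Popa, Lemma 8.4, so your argument can only be judged on its own terms. Unfortunately both load-bearing steps have genuine gaps. The first is the production of $u_0$ with $u_0 P_0 u_0^\star \subset P_1$ of finite Jones index. Irreducibility of $P_0$ does give $vv^\star \in P_0' \cap M = \C 1$, hence $vv^\star = 1$; but the right support $v^\star v$ lies in $\theta(P_0)' \cap p(M_n(\C)\ot M)p$, and there is no reason for it to lie in $M_n(\C)\ot P_1$, nor to be equivalent inside $M_n(\C)\ot P_1$ to $e_{11}\ot 1$. Without locating $v^\star v$ there, the ``amplification trick'' cannot be run: you only get an isomorphism of $P_0$ onto the compression $\theta(P_0)\,v^\star v$ of a corner of $M_n(\C)\ot M$, not a unital embedding of $P_0$ into $P_1$ implemented by a unitary of $M$. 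This is exactly the point where regularity and the icc hypothesis have to enter, and your step 1 uses neither. Moreover, intertwining-by-bimodules only bounds the right $P_1$-dimension of the intertwining bimodule and says nothing about its left $P_0$-dimension, so even granting $u_0 P_0 u_0^\star \subset P_1$ there is no control on $[P_1 : u_0 P_0 u_0^\star]$; the finiteness of $\lambda$ in step 2 is therefore unsupported.

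The second gap is the argument that $\lambda = 1$. Nothing forces the stabilizer of $w P_0 w^\star$ in $\cN_M(P_0)/\cU(P_0)$ to have finite index, and even if it did, an icc group can perfectly well have proper finite-index subgroups, so ``finite index, hence everything'' is not a valid inference. The fallback via Jones projections also fails: for distinct subfactors $x(wP_0w^\star)x^\star \subset P_0$ the associated Jones projections in $\langle M, e_{P_0}\rangle$ are not pairwise orthogonal (their products encode the generically nonzero overlaps of the corresponding conditional expectations), so no trace bound is violated. Note that what you are trying to prove at this point --- that an irreducible finite-index inclusion $wP_0w^\star \subset P_0$ with $w$ a unitary of the ambient factor must have index $1$ --- is essentially the whole content of the lemma, since a $\II_1$ factor can be isomorphic to a proper finite-index subfactor of itself; so this step genuinely requires the regularity and icc hypotheses on \emph{both} $P_0$ and $P_1$, which your sketch never actually brings to bear.
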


Using the main results of \cite{popa-vaes;unique-cartan-decomposition-factors-free,popa-vaes;unique-cartan-decomposition-factors-hyperbolic}, the assumptions $P_0 \prec P_1$ and $P_1 \prec P_0$ in the previous lemma are automatically satisfied when the algebras $P_i$ are amenable and the groups $\frac{\mathcal{N}_\M(\P_i)}{\mathcal{U}(\P_i)}$ are hyperbolic (or satisfy other rank $1$ type conditions). The same holds for many other classes of groups and, as we will see, they all belong to one natural family of groups that we introduce now.

Let $(M,\tau)$ be a tracial von Neumann algebra. The \emph{Jones index} of a von Neumann subalgebra $P \subset M$ is defined as the $P$-dimension of $L^2(M)$ as a $P$-module, computed using the given trace $\tau$.
An inclusion $P \subset M$ is said to be \emph{essentially of finite index} if there exist projections $p \in P' \cap M$ that lie arbitrarily close to $1$ such that $P p \subset pMp$ has finite Jones index.
Note that being essentially of finite index is independent of the choice of trace $\tau$. For later use, also observe that for abelian von Neumann algebras $A \subset B$, being essentially of finite index is equivalent to having a family of projections $p_k \in B$ such that $\sum_k p_k = 1$ and $Ap_k=Bp_k$ for all $k$.

We call $A \subset M$ a \emph{virtual core subalgebra} if $A' \cap M  = \cZ(A)$ and if the inclusion $\cN_M(A)\dpr \subset M$ is essentially of finite index.

\begin{definition}\label{definition:class-C}
We say that a countably infinite group $\Gamma$ belongs to class $\cC$ if for every trace preserving cocycle action $\Gamma \actson (B,\tau)$ and every amenable, virtual core subalgebra $A \subset p (B \rtimes \Gamma)p$, we have that $A \prec B$.
\end{definition}

Note that all groups in the class $\cC$ are nonamenable: this follows by taking $B = \C1$ and $A = L(\Gamma)$. Similarly, groups in the class $\cC$ do not admit infinite amenable normal subgroups.

\begin{remark}\label{remark:many-in-class-C}
We have several motivations to introduce this class $\cC$ of groups. Our definition is first of all motivated by the concept of $\cC_s$-rigidity introduced in \cite[Definition 1.4]{popa-vaes;unique-cartan-decomposition-factors-free}. A group $\Gamma$ is called $\cC_s$-rigid if the following property holds. For every free ergodic probability measure preserving action of $\Gamma$ on $(X,\mu)$ with crossed product $M = L^\infty(X) \rtimes \Gamma$ and for any maximal abelian von Neumann subalgebra $A \subset M$ whose normalizer is a finite index subfactor of $M$, we have that $A$ is unitarily conjugate with $L^\infty(X)$. By \cite[Theorem A.1]{popa;class-factors-betti-numbers}, every group in the class $\cC$ is also $\cC_s$-rigid. In particular, all groups in the class $\cC$ are Cartan-rigid (or $\cC$-rigid), meaning that for all crossed product $\II_1$ factors $L^\infty(X) \rtimes \Gamma$ by free ergodic probability measure preserving actions, $L^\infty(X)$ is the unique Cartan subalgebra up to unitary conjugacy.

Also, by \cref{lemma:weakcontainmentisunitaryconjug}, if $\Gamma$ and $\Lambda$ are icc groups in the class $\cC$ and if we have a stable isomorphism of crossed products $\pi : R \rtimes \Gamma \recht (R \rtimes \Lambda)^t$ given by outer actions of $\Gamma,\Lambda$ on the hyperfinite $\II_1$ factor $R$, then $\pi(R)$ is unitarily conjugate to $R^t$ and $\Gamma \cong \Lambda$. This is the result that we need in our paper.

So, class $\cC$ unifies Cartan-rigidity and unique crossed product decompositions for outer actions. Moreover, as we prove below, the class $\cC$ satisfies several stability properties, making it a natural class of groups to consider. In particular, we prove in \cref{proposition:class-C-stable-ext} that class $\cC$ is stable under extensions. This stability result is motivated by \cite{chifan-ioana-kida;arbitrary-actions-central-quotients-braid-groups}, where the structural theorems of \cite{popa-vaes;unique-cartan-decomposition-factors-free,popa-vaes;unique-cartan-decomposition-factors-hyperbolic} for crossed products with free groups or hyperbolic groups are used to show that also extensions of hyperbolic groups by hyperbolic groups are Cartan-rigid.

Finally note that over the last few years, large classes of groups have been shown to belong to class $\cC$. Assume that $\Gamma \actson (B,\tau)$ is a cocycle action. Write $M = B \rtimes \Gamma$. To `remove' the cocycle, consider the dual coaction $\Delta : M \recht M \ovt L(\Gamma)$ defined by $\Delta(b u_g) = b u_g \ot u_g$ for all $b \in B$, $g \in \Gamma$. Whenever $A \subset p M p$ is an amenable, virtual core subalgebra, we write $q = \Delta(p)$ and we can apply the following results to the inclusion $\Delta(A) \subset q (M \ovt L(\Gamma)) q$. Note that, in order to prove that $A \prec B$, it suffices to show that $\Delta(A) \prec M \ot 1$.
\begin{itemize}
\item By \cite[Theorem 3.1, Lemma 4.1 and Theorem 7.1]{popa-vaes;unique-cartan-decomposition-factors-free}, every weakly amenable group $\Gamma$ with $\beta_1^{(2)}(\Gamma) > 0$ belongs to the class $\cC$.
\item By \cite[Theorem 3.1]{popa-vaes;unique-cartan-decomposition-factors-hyperbolic}, all weakly amenable, nonamenable, bi-exact groups (and in particular, all nonelementary hyperbolic groups) belong to the class $\cC$.
\item By \cite[Theorem 1.6]{ioana;cartan-subalgebras-amalgamated-free-product-factors} (see also \cite[Theorem A]{vaes;normalizers-inside-amalgamated-free-product}), every free product $\Gamma = \Gamma_1 \star \Gamma_2$ with $|\Gamma_1| \geq 2$ and $|\Gamma_2| \geq 3$ belongs to the class $\cC$.
\end{itemize}
\end{remark}

The proof of the following lemma is essentially contained in the proof of \cite[Theorem A.2]{popa;class-factors-betti-numbers}.

\begin{lemma}\label{lemma:lem1}
Let $(M,\tau)$ be a tracial von Neumann algebra and $Q \subset N \subset M$ von Neumann subalgebras. Assume that $q \in Q' \cap M$ is a nonzero projection such that $q(Q' \cap M)q = \cZ(Q) q$. Then there exists a nonzero projection $p \in Q' \cap N$ such that $p(Q' \cap N)p = \cZ(Q) p$ and such that $p$ is smaller than the support projection of $E_N(q)$.
\end{lemma}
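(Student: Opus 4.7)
Write $\mathcal{M} = Q' \cap M$, $\mathcal{B} = Q' \cap N$, and $\cZ = \cZ(Q)$, so that $\cZ \subset \mathcal{B} \subset \mathcal{M}$, with $\cZ$ contained in both $\cZ(\mathcal{M})$ and $\cZ(\mathcal{B})$. Since $q$ commutes with every element of $Q \subset N$, the element $E_N(q)$ lies in $\mathcal{B}$, and so does its support projection, which I denote by $s$. My plan is to first produce a suitable subprojection of $s$ inside $\mathcal{M}$ with the desired abelian-corner property, and then to descend to $\mathcal{B}$.

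For the first step I would apply the polar decomposition to $sq \in \mathcal{M}$: writing $sq = v|sq|$ with $|sq| = (qsq)^{1/2}$ yields a partial isometry $v \in \mathcal{M}$ whose initial projection $v^\star v$ is the support of $qsq$ (hence bounded by $q$) and whose final projection $vv^\star$ is the support of $sqs$ (hence bounded by $s$). Because $v^\star v \leq q$, the hypothesis $q\mathcal{M}q = \cZ q$ forces $(v^\star v)\mathcal{M}(v^\star v) = \cZ (v^\star v)$; conjugating by $v$ and using that $\cZ$ is central in $\mathcal{M}$, the projection $t := vv^\star \leq s$ satisfies $t\mathcal{M}t = \cZ t$.

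The main obstacle is then to descend from $t \in \mathcal{M}$ to a genuine projection $p \in \mathcal{B}$ with $p \leq s$ and $p\mathcal{B}p = \cZ p$. For this I would exploit the tracial structure of $(M,\tau)$. The existence of the abelian projection $q$ implies that, on the central support $z_0 \in \cZ(\mathcal{M})$ of $q$, the algebra $\mathcal{M} z_0$ is of type I. Disintegrating over its center, each fiber $\mathcal{M}_x$ is a tracial type I factor, hence a matrix algebra, in which the image of $q$ is a rank-one projection and $\mathcal{B}_x \subset \mathcal{M}_x$ is a finite-dimensional subalgebra. Fiberwise, $s_x \mathcal{B}_x s_x$ is nonzero and finite-dimensional, so it contains a minimal projection $p_x$, which is automatically minimal in $\mathcal{B}_x$. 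A measurable selection assembles the $p_x$ into a nonzero $p \in \mathcal{B}$ with $p \leq s$ and $p \mathcal{M} p = \cZ p$; restricting gives $p \mathcal{B} p \subset p \mathcal{M} p = \cZ p$, while the reverse inclusion is automatic from $\cZ \subset \mathcal{B}$.
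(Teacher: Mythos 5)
Your overall strategy --- use the abelian projection $q$ to see that $(Q'\cap M)z_0$ is of type $\I$ with center $\cZ(Q)z_0$, disintegrate over that center, and select minimal projections of the fibres of $Q'\cap N$ --- is workable and is a reasonable substitute for the paper's bookkeeping with essentially-finite-index inclusions. But there is a genuine gap at exactly the point the paper is careful about. The fibres $\mathcal{B}_x$ from which you select $p_x$ are the fibres of the algebra $(Q'\cap N)z_0$ sitting inside $(Q'\cap M)z_0$, so the assembled projection $p$ lies in $(Q'\cap N)z_0$ --- and this algebra is in general \emph{not} contained in $N$, because the central support $z_0$ of $q$ is a projection of $\cZ(Q'\cap M)$ and there is no reason for it to belong to $N$. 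Concretely, take $M=M_2(\C)\oplus\C$, $Q=N=\C 1$ and $q=e_{11}\oplus 0$: then $Q'\cap M=M$, $q(Q'\cap M)q=\C q=\cZ(Q)q$, $z_0=1\oplus 0\notin N$, and your construction outputs $p=z_0$, which does not lie in $Q'\cap N=\C 1$ (the correct output is $p=1$). The defect is repairable: $b\mapsto bz_0$ is a normal $\star$-isomorphism of a direct summand $(Q'\cap N)c$ of $Q'\cap N$ onto $(Q'\cap N)z_0$, and pulling $p$ back through it gives a projection $\tilde p\in Q'\cap N$ with $\tilde p\leq s$ and $\tilde p(Q'\cap N)\tilde p=\cZ(Q)\tilde p$. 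But this step must be made explicitly; it is precisely what the paper does when it replaces $(Q'\cap N)z$ by the isomorphic algebra $(Q'\cap N)f$, with $f$ the support projection of $E_N(z)$, before picking its abelian projection.

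Two smaller points. First, your claim that the assembled $p$ satisfies $p\mathcal{M}p=\cZ p$ is false in general: a minimal projection of $\mathcal{B}_x$ need not have rank one in $\mathcal{M}_x$ (e.g.\ $\mathcal{B}_x=\C 1\subset M_2(\C)=\mathcal{M}_x$). What is true, and is all you need, is $p_x\mathcal{B}_xp_x=\C p_x$, which yields $p\bigl((Q'\cap N)z_0\bigr)p=\cZ(Q)p$ directly, without passing through $\mathcal{M}$. Second, the polar decomposition step is vacuous: since $E_N$ is trace preserving, $\tau((1-s)q(1-s))=\tau((1-s)E_N(q)(1-s))=0$, so $q\leq s$, hence $sq=q$ and $t=q$; in any case $t$ plays no role in the rest of your argument.
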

\begin{proof}
Denote by $e$ the support projection of $E_N(q)$. Note that $q \leq e$ and that $e \in Q' \cap N$. Replacing $Q \subset N \subset M$ by $Q e \subset eNe \subset eMe$, we may assume that the support projection of $E_N(q)$ equals $1$.

Denote by $z \in \cZ(Q' \cap M)$ the central support of $q \in Q' \cap M$. Since $q (Q' \cap M) q = \cZ(Q) q$, the projection $q \in Q' \cap M$ is abelian and $(Q' \cap M)z$ is of type I with center equal to $\cZ(Q)z$. In particular, the inclusion $\cZ(Q) z \subset (Q' \cap M) z$ is essentially of finite index. A fortiori, the inclusion $\cZ(Q) z \subset (Q' \cap N) z$ is essentially of finite index. This inclusion is isomorphic with the inclusion $\cZ(Q) f \subset (Q' \cap N) f$, where $f$ denotes the support projection of $E_N(z)$. Since $\cZ(Q) f$ lies in the center of $(Q' \cap N)f$, it follows that $(Q' \cap N)f$ is of type I with $\cZ(Q)f$ being an essentially finite index subalgebra of its center. Because $(Q' \cap N)f$ is of type I, we can choose a projection $p_1 \in (Q'\cap N)f$ such that $p_1(Q'\cap N)p_1=\cZ(Q'\cap N)p_1$. Since $\cZ(Q)p_1\subset \cZ(Q'\cap N)p_1$ is an inclusion of abelian von Neumann algebras of essentially finite index, we finally find a nonzero projection $p \in \cZ(Q' \cap N)p_1$ such that $\cZ(Q) p = \cZ(Q'\cap N)p = p (Q' \cap N) p$.
\end{proof}

In the following lemma, we prove that virtual cores behave well w.r.t.\ Popa's intertwining-by-bimodules. This will be crucial in proving that class $\cC$ is stable under extensions. The proof is almost identical to the proof of \cite[Proposition 3.6]{chifan-ioana-kida;arbitrary-actions-central-quotients-braid-groups}. We denote $P^n = M_n(\C) \ot P$.

\begin{lemma}\label{lemma:lem2}
Let $(M,\tau)$ be a tracial von Neumann algebra. Let $A \subset eMe$ and $N \subset M$ be von Neumann subalgebras. Assume that $A \subset eMe$ is a virtual core subalgebra and that $A \prec N$. Then we can choose nonzero projections $p \in N^n$, $z \in \cZ(A)$, an injective normal $\star$-homomorphism $\theta : Az \recht pN^n p$ and a partial isometry $v \in (M_{1,n}(\C) \ot M)p$ satisfying
\begin{itemize}
\item $vv^\star = z$ and $E_{pN^n p}(v^\star v)$ has support projection $p$.
\item $a v = v \theta(a)$ for all $a \in A z$.
\item $\theta(Az)$ is a virtual core subalgebra of $p N^n p$.
\end{itemize}
\end{lemma}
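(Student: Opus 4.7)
The plan is to unpack the hypothesis $A \prec N$ into initial intertwining data and then successively shrink until the three properties hold. First, apply Popa's criterion to obtain a projection $p_0 \in N^n$, a normal unital $*$-homomorphism $\theta_0 : A \to p_0 N^n p_0$, and a nonzero partial isometry $v_0 \in (M_{1,n}(\C)\ot M) p_0$ with $a v_0 = v_0 \theta_0(a)$ for all $a\in A$.

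The key initial observation is that $v_0 v_0^\star$ lies in $A' \cap eMe$, which equals $\cZ(A)$ by the virtual core hypothesis. Set $z_0 := v_0 v_0^\star \in \cZ(A)$. Since $(1-z_0) v_0 = 0$, the restriction of $\theta_0$ to $Az_0$ still satisfies the intertwining relation, and by further cutting down by a central projection (namely, the central support of the kernel) we may arrange that $\theta_0|_{Az_0}$ is injective. Next, observe that conjugation by $v_0$ gives a $*$-isomorphism $z_0 M z_0 \to q M^n q$ (where $q = v_0^\star v_0$) that carries $Az_0$ to $\theta_0(Az_0)q$. The virtual core identity $A' \cap eMe = \cZ(A)$ then transports to
\[
q\bigl(\theta_0(Az_0)' \cap \theta_0(z_0) M^n \theta_0(z_0)\bigr)q = \cZ(\theta_0(Az_0))\,q .
\]

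Now invoke \cref{lemma:lem1} with $Q = \theta_0(Az_0)$ inside the inclusion $\theta_0(z_0)N^n\theta_0(z_0) \subset \theta_0(z_0)M^n\theta_0(z_0)$ and with the projection $q$. This yields a nonzero projection $p_1 \in \theta_0(Az_0)' \cap \theta_0(z_0)N^n\theta_0(z_0)$ satisfying $p_1\bigl(\theta_0(Az_0)' \cap \theta_0(z_0)N^n\theta_0(z_0)\bigr)p_1 = \cZ(\theta_0(Az_0))p_1$ and $p_1 \leq s\bigl(E_{N^n}(q)\bigr)$. Put $v := v_0 p_1$, $p := p_1$, and $z := v v^\star = v_0 p_1 v_0^\star$. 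Since $v v^\star \in A'\cap eMe = \cZ(A)$, this $z$ is again a projection in $\cZ(A)$. After a final harmless central cutdown we can take $\theta(a) := \theta_0(a)p$ to be an injective $*$-homomorphism $Az \to pN^np$. The conditions $av = v\theta(a)$ and $vv^\star = z$ now hold by construction. For the support condition, note that $E_{pN^np}(v^\star v) = p\, E_{N^n}(q)\, p$; the requirement $p_1 \leq s(E_{N^n}(q))$ combined with the centrality identity from \cref{lemma:lem1} places this element in an abelian corner $\cZ(\theta_0(Az))p$, and a final shrinking of $p$ to the support of this element (which still lies in $N^n$) achieves full support.

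For the third bullet, the relative commutant identity $\theta(Az)' \cap pN^np = p\bigl(\theta_0(Az)' \cap p_0 N^n p_0\bigr)p = \cZ(\theta_0(Az))p = \cZ(\theta(Az))$ is immediate from \cref{lemma:lem1} plus injectivity of $\theta$. The genuine difficulty, and the main obstacle of the proof, is the essentially-finite-index condition on $\cN_{pN^np}(\theta(Az))'' \subset pN^np$. Here the strategy is to transport normalizers through $v$: for every $u \in \cN_{eMe}(A)$ that commutes with $z$ (a finite-index subgroup of $\cN_{eMe}(A)$ since the conjugation action on the projection $z \in \cZ(A)$ has finite orbits modulo the essentially finite index assumption on $\cN_{eMe}(A)'' \subset eMe$), the element $w_u := v^\star u v$ is a partial isometry in $pM^np$ satisfying $w_u \theta(a) = \theta(uau^\star) w_u$. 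Taking partial isometries in the polar decomposition of $E_{N^n}(w_u)$ produces normalizers of $\theta(Az)$ inside $pN^np$; the essentially finite index of $\cN_{eMe}(A)''\subset eMe$ translates, via the tracial identity $\tau(v_0 E_{N^n}(q)^{-1} v_0^\star) < \infty$ that follows from the full-support condition on $E_{pN^np}(v^\star v)$, into essentially finite index of the weak closure of these normalizers inside $pN^np$. Carrying out this transport cleanly — in particular, arguing that enough normalizers survive the conditional expectation — is the technical heart of the argument and mirrors the proof of \cite[Proposition 3.6]{chifan-ioana-kida;arbitrary-actions-central-quotients-braid-groups}.
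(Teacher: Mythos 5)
Your overall route coincides with the paper's: extract the intertwining data from $A \prec N$, observe that $v_0v_0^\star \in A'\cap eMe = \cZ(A)$, apply \cref{lemma:lem1} to force the relative commutant of the image in $pN^np$ down to its centre, and then transport normalizers through $v$ via the conditional expectation onto $pN^np$. The first two bullets are essentially in order. (A minor remark on ordering: the paper arranges the full-support condition by replacing $p_1$ with the support of $E_{p_1N^np_1}(q_1)$ \emph{before} invoking \cref{lemma:lem1}; its conclusion that $p$ lies under the support of $E_{N^n}(q_1)$ then automatically gives that $pE_{N^n}(q_1)p$ has support $p$. Your "final shrinking" afterwards also works, but only because the support of $pE_{N^n}(q)p$ lies in $\cZ(\theta(Az))$, so the commutant identity survives the cut; this deserves a sentence.)

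The genuine gap is the third bullet, which you yourself flag as "the technical heart" and do not carry out, and two of the assertions you lean on there are incorrect. First, you do not need, and have not proved, that the normalizing unitaries of $A$ commuting with $z$ form a finite-index subgroup of $\cN_{eMe}(A)$; the correct statement, and the one the paper uses, is that since $z\in\cZ(A)$ one has $\cN_{zMz}(Az)\dpr = z\,\cN_{eMe}(A)\dpr z$ by \cite[Proof of 2.1]{jones-popa;properties-MASA-factors}, whence $P_0 := \cN_{zMz}(Az)\dpr \subset zMz$ is essentially of finite index. Second, the quantity $\tau(v_0 E_{N^n}(q)^{-1}v_0^\star)$ is not defined ($E_{N^n}(q)$ need not be invertible) and plays no role. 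What is actually required is: (a) for $u\in\cN_{zMz}(Az)$, the element $x=E_{pN^np}(v^\star uv)$ satisfies $xa = \al(a)x$ for all $a \in \theta(Az)$, where $\al=\theta\circ(\Ad u)\circ\theta^{-1}$, and \emph{because} $\theta(Az)'\cap pN^np = \cZ(\theta(Az))$ --- precisely the output of \cref{lemma:lem1} --- such an $x$ automatically lies in $P=\cN_{pN^np}(\theta(Az))\dpr$ by \cite[Remark 2.4]{jones-popa;properties-MASA-factors}; and (b) the von Neumann algebra $P_1$ generated by $E_{pN^np}(v^\star P_0 v)$ is essentially of finite index in $pN^np$, which follows from the essential finite index of $v^\star P_0 v\subset qM^nq$ together with the full-support condition on $E_{pN^np}(q)$, via \cite[Lemma 2.3]{chifan-ioana-kida;arbitrary-actions-central-quotients-braid-groups}. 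Since $P_1\subset P$, the finite-index claim follows. Your sketch replaces (a) by "taking partial isometries in the polar decomposition of $E_{N^n}(w_u)$", which does not by itself produce normalizers of $\theta(Az)$ without the relative commutant identity, and replaces (b) by an undefined trace inequality; as written, the third bullet is asserted rather than established.
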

\begin{proof}
Since $A \prec N$, we can choose a projection $p_1 \in N^n$, a normal $\star$-homomorphism $\theta : A \recht p_1 N^n p_1$ and a nonzero partial isometry $w \in e(M_{1,n}(\C) \ot M)p_1$ satisfying $a w = w \theta(a)$ for all $a \in A$. Write $z_1 = w w^\star$ and note that $z_1 \in A' \cap eMe = \cZ(A)$. Note that the restriction of $\theta$ to $A z_1$ is injective.

Denote $q_1 = w^\star w$. We may assume that the support projection of $E_{p_1 N^n p_1}(q_1)$ equals $p_1$. Since $A' \cap eMe = \cZ(A)$, the inclusions $\theta(A z_1) \subset p_1 N^n p_1 \subset p_1 M^n p_1$ and the projection $q_1$ satisfy the assumptions of \cref{lemma:lem1}. So by \cref{lemma:lem1}, we can choose a nonzero projection $p \in p_1 N^n p_1$ such that
$$(\theta(Az_1)p)' \cap p N^n p = \cZ(\theta(Az_1)) p \; .$$
We define $v$ as the polar part of $w p$ and replace $\theta$ by $\theta(\cdot) p$. Put $z = vv^\star$, then we still have that $z \in \cZ(A)$ and that the restriction of $\theta$ to $Az$ is injective. We now also have that $\theta(Az)' \cap p N^n p =  \cZ(\theta(Az))$. Write $q = v^\star v$. By construction, the support projection of $E_{p N^n p}(q)$ equals $p$.

Define $P = \cN_{p N^n p}(\theta(Az))\dpr$. Let $u \in \cN_{z M z}(Az)$ and define $x := E_{p N^n p}(v^\star u v)$. We prove that $x$ belongs to $P$. Define the automorphism $\al \in \Aut(\theta(Az))$ given by $\al = \theta \circ (\Ad u) \circ \theta^{-1}$. By construction, $x a = \al(a) x$ for all $a \in \theta(Az)$. Since the relative commutant of $\theta(Az)$ in $p N^n p$ equals $\cZ(\theta(Az))$, it follows from \cite[Remark 2.4]{jones-popa;properties-MASA-factors} that indeed $x \in P$.

Define $P_0 = \cN_{zMz}(Az)\dpr$. Because $z \in \cZ(A)$, it follows from \cite[Proof of 2.1]{jones-popa;properties-MASA-factors} that $P_0 = z \cN_{eMe}(A)\dpr z$. Since $A \subset e Me$ is a virtual core subalgebra, we conclude that $P_0 \subset z M z$ is essentially of finite index. Thus $v^\star P_0 v \subset q M^n q$ is essentially of finite index. Define $P_1 \subset p N^n p$ as the von Neumann subalgebra generated by $E_{p N^n p}(v^\star P_0 v)$. By \cite[Lemma 2.3]{chifan-ioana-kida;arbitrary-actions-central-quotients-braid-groups}, the inclusion $P_1 \subset p N^n p$ is essentially of finite index. In the previous paragraph, we have proved that $P_1 \subset P$. So a fortiori, $P \subset p N^n p$ is essentially of finite index. So we have proved that $\theta(Az) \subset p N^n p$ is a virtual core subalgebra.
\end{proof}

We can now prove that the class $\cC$ is stable under extensions.

\begin{proposition}\label{proposition:class-C-stable-ext}
The class $\cC$ is stable under extensions.
\end{proposition}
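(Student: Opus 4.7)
The plan is to exploit the short exact sequence $1 \to N \to \Gamma \to Q \to 1$ (with $N, Q \in \cC$) by decomposing a crossed product by $\Gamma$ into one by $Q$ of one by $N$, and then invoking the defining property of class $\cC$ twice, with \cref{lemma:lem2} bridging the two applications. Fix a trace preserving cocycle action $\Gamma \actson^{\al,v} (B,\tau)$, write $M = B \rtimes \Gamma$ and $\Ntil = B \rtimes N$, and let $A \subset pMp$ be an amenable virtual core subalgebra. Since $N \triangleleft \Gamma$, the subalgebra $\Ntil \subset M$ is globally invariant under conjugation by the canonical unitaries of $\Gamma$, and choosing a set-theoretic section $s : Q \to \Gamma$ of the quotient map yields a trace preserving cocycle action $Q \actson (\Ntil,\tau)$ with $\Ntil \rtimes Q \cong M$ canonically.

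Applying the defining property of class $\cC$ to this cocycle action of $Q$, I first obtain $A \prec_M \Ntil$. \Cref{lemma:lem2} then furnishes a nonzero projection $p_1 \in \Ntil^n$, a nonzero central projection $z \in \cZ(A)$, an injective normal $\star$-homomorphism $\theta : Az \to p_1 \Ntil^n p_1$, and a partial isometry $v_1 \in (M_{1,n}(\C) \ot M) p_1$ satisfying $v_1 v_1^\star = z$, $a v_1 = v_1 \theta(a)$ for $a \in Az$, and $\theta(Az) \subset p_1 \Ntil^n p_1$ is a virtual core subalgebra. Since $Az$ is a corner of the amenable algebra $A$ and $\theta$ is injective, $\theta(Az)$ is amenable. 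Now rewrite $\Ntil^n = M_n(\C) \ot (B \rtimes N) \cong (M_n(\C) \ot B) \rtimes N$, which is the crossed product of a trace preserving cocycle action of $N$ on $M_n(\C) \ot B$. Applying the defining property of class $\cC$ to $N$ yields $\theta(Az) \prec_{\Ntil^n} M_n(\C) \ot B$. Composing this intertwining with $v_1$ via the standard transitivity of intertwining-by-bimodules gives $Az \prec_M B$, and since $z \in \cZ(A)$ is a nonzero central projection, this implies $A \prec_M B$, as required.

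The only nonroutine point in the plan lies at the interface between the two applications: a bare application of $Q \in \cC$ yields only that $A$ partially intertwines into $\Ntil$, but the defining property of class $\cC$ requires an amenable \emph{virtual core} subalgebra at the next step. \Cref{lemma:lem2} is engineered precisely to upgrade the intertwining to one whose image is a virtual core subalgebra in the intermediate algebra, so that the second application of the hypothesis $N \in \cC$ can proceed. This is where all of the technical work hides, and the rest of the argument is essentially a bookkeeping composition of the two partial embeddings.
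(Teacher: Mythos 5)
Your proof is correct and follows essentially the same route as the paper: decompose $M = B \rtimes \Gamma$ as $(B \rtimes N) \rtimes Q$, apply the class-$\cC$ property of $Q$ to get $A \prec B \rtimes N$, use \cref{lemma:lem2} to upgrade the image to an amenable virtual core subalgebra of a corner of $(B\rtimes N)^n \cong (M_n(\C)\ot B)\rtimes N$, apply the class-$\cC$ property of $N$, and compose the two intertwinings. The only imprecision is the appeal to ``standard transitivity'' of intertwining-by-bimodules, which fails in general; what makes the composition legitimate here is precisely the conclusion of \cref{lemma:lem2} that $E_{pN^np}(v^\star v)$ has full support $p$, a point the paper states explicitly and you should too.
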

\begin{proof}
Assume that $\Gamma_1 \lhd \Gamma$ with $\Gamma/\Gamma_1 \cong \Gamma_2$ and that $\Gamma_1,\Gamma_2 \in \cC$. We have to prove that $\Gamma \in \cC$. Choose a trace preserving cocycle action $\Gamma \actson (B,\tau)$. Write $M = B \rtimes \Gamma$ and let $A \subset eMe$ be an amenable virtual core subalgebra. We have to prove that $A \prec B$. Write $N = B \rtimes \Gamma_1$. We can then view $M = N \rtimes \Gamma_2$ for some cocycle action $\Gamma_2 \actson N$. Because $\Gamma_2$ belongs to $\cC$, we have that $A \prec N$. Choose projections $z \in \cZ(A)$, $p \in N^n$, a partial isometry $v \in (M_{1,n}(\C) \ot M)p$ and a $\star$-homomorphism $\theta : A z \recht p N^n p$ satisfying the conclusions of \cref{lemma:lem2}.

Since $N^n = (M_n(\C) \ot B) \rtimes \Gamma_1$ for some cocycle action of $\Gamma_1$ on $M_n(\C) \ot B$, and because $\Gamma_1 \in \cC$, it follows that $\theta(Az) \prec M_n(\C) \ot B$ inside $N^n$. Since the support of $E_{p N^n p}(v^\star v)$ equals $p$, this intertwining can be combined with the intertwining given by $v$ and we obtain the conclusion that $A \prec B$ inside $M$.
\end{proof}

Recall that two groups $\Gamma,\Lambda$ are called \emph{commensurable} if they admit isomorphic finite index subgroups.

\begin{proposition}\label{proposition:class-C-stable-commen}
The class $\cC$ is stable under commensurability.
\end{proposition}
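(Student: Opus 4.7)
The plan is to split the argument into two stability properties of the class $\cC$ and then combine them. Specifically, I would first prove (i) that if $\Gamma \in \cC$ and $\Gamma_0 < \Gamma$ is a finite index subgroup, then $\Gamma_0 \in \cC$; and second, (ii) that if $\Gamma_0 \in \cC$ and $\Gamma_0 < \Gamma$ is a finite index subgroup, then $\Gamma \in \cC$. Given commensurable groups $\Gamma, \Lambda$ with isomorphic finite index subgroups $\Gamma_0 \cong \Lambda_0$ and $\Gamma \in \cC$, property (i) applied to $\Gamma_0 < \Gamma$ yields $\Gamma_0 \in \cC$, hence also $\Lambda_0 \in \cC$, and then property (ii) applied to $\Lambda_0 < \Lambda$ yields $\Lambda \in \cC$.

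For (i), I would start with a trace preserving cocycle action $\Gamma_0 \actson (B_0,\tau_0)$ and induce it to a cocycle action $\Gamma \actson (B,\tau)$, where $B$ is the finite direct sum of $n = [\Gamma:\Gamma_0]$ copies of $B_0$. The standard Morita equivalence identifies $B \rtimes \Gamma$ with $M_n(\C) \ot (B_0 \rtimes \Gamma_0)$ in such a way that $B_0 \rtimes \Gamma_0$ corresponds to the corner $e_{11}(B \rtimes \Gamma)e_{11}$, while $B$ sits as the diagonal copy of $\C^n \ot B_0$ inside $M_n(\C) \ot B_0$. An amenable virtual core subalgebra $A \subset q(B_0 \rtimes \Gamma_0)q$ therefore also appears as an amenable virtual core subalgebra of $(q \ot e_{11})(B \rtimes \Gamma)(q \ot e_{11})$, and applying $\Gamma \in \cC$ yields $A \prec B$ inside $B \rtimes \Gamma$. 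A short computation with the matrix decomposition $x = \sum_{i,j} x_{ij} \ot e_{ij}$ of elements of $B \rtimes \Gamma$, together with the identity $E_B(x u y^\star) = \sum_i E_{B_0}(x_{i1} u y_{1i}^\star) \ot e_{ii}$ valid for $u \in \cU(A)$, then allows one to transfer this intertwining back down to $A \prec B_0$ inside $B_0 \rtimes \Gamma_0$.

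For (ii), I would fix a trace preserving cocycle action $\Gamma \actson (B,\tau)$, set $M = B \rtimes \Gamma$ and $N = B \rtimes \Gamma_0$, and consider an amenable virtual core subalgebra $A \subset eMe$. The inclusion $N \subset M$ has finite Jones index $n = [\Gamma:\Gamma_0]$, with Pimsner-Popa basis given by coset representatives $(u_{g_i})_{i=1}^n$, and this produces an embedding $M \hookrightarrow M_n(\C) \ot N$ which, restricted to $A$, witnesses $A \prec_M N$. I would then invoke \cref{lemma:lem2} to obtain a nonzero central projection $z \in \cZ(A)$, a $\star$-isomorphism $\theta : Az \recht A_1 \subset p N^n p$ with $A_1$ again an amenable virtual core, and a partial isometry $v \in (M_{1,n}(\C) \ot M)p$ linking $Az$ to $A_1$. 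Since $N^n$ is naturally a crossed product $(M_n(\C) \ot B) \rtimes \Gamma_0$, applying $\Gamma_0 \in \cC$ yields $A_1 \prec M_n(\C) \ot B$ inside $N^n$. Composing this intertwining with $v$, exactly as at the end of the proof of \cref{proposition:class-C-stable-ext}, produces the desired intertwining $A \prec B$ inside $M$.

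The main obstacle is step (i), and more precisely the bookkeeping needed to verify that the virtual core property for $A$ truly survives the passage from $q(B_0 \rtimes \Gamma_0)q$ to the corresponding corner of $B \rtimes \Gamma$, and that the large-algebra intertwining $A \prec B$ can be pulled back to a small-algebra intertwining $A \prec B_0$. By contrast, step (ii) is essentially a rerun of the proof of \cref{proposition:class-C-stable-ext}, with the role played there by the hypothesis $\Gamma_2 \in \cC$ now played by the elementary fact that $N \subset M$ has finite Jones index.
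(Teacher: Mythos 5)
Your proposal is correct and follows essentially the same route as the paper: for the passage to a finite index subgroup you induce the cocycle action and use the Morita identification $B \rtimes \Gamma \cong M_n(\C) \ot (B_0 \rtimes \Gamma_0)$, and for the passage to a finite index overgroup you use $A \prec N$ together with \cref{lemma:lem2} and the concluding argument of \cref{proposition:class-C-stable-ext}. The only cosmetic difference is that the paper places the amplification $M_n(\C) \ot A$ in a corner of the induced crossed product rather than $A$ itself in the corner $q \ot e_{11}$, and the ``bookkeeping'' you single out as the main obstacle is in fact immediate, since that corner is literally $q(B_0 \rtimes \Gamma_0)q$ and the virtual core property is intrinsic to the pair.
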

\begin{proof}
Let $\Lambda < \Gamma$ be a finite index subgroup. We have to prove that $\Lambda \in \cC$ if and only if $\Gamma \in \cC$. Write $n = [\Gamma: \Lambda]$.

First assume that $\Gamma \in \cC$. Let $\Lambda \actson (B,\tau)$ be a trace preserving cocycle action. Write $M = B \rtimes \Lambda$ and let $A \subset eMe$ be an amenable virtual core subalgebra. Define $B_1 = \ell^\infty(\Gamma/\Lambda) \ot B$ and consider the induced cocycle action $\Gamma \actson B_1$. By construction, $B_1 \rtimes \Gamma \cong M_n(\C) \ot (B \rtimes \Lambda)$. In this way, we can view $M_n(\C) \ot A$ as an amenable virtual core subalgebra of a corner of $B_1 \rtimes \Gamma$. Since $\Gamma \in \cC$, it follows that $M_n(\C) \ot A \prec B_1$ and thus, $A \prec B$.

Conversely assume that $\Lambda \in \cC$. Let $\Gamma \actson (B,\tau)$ be a trace preserving cocycle action. Write $M = B \rtimes \Gamma$ and let $A \subset eMe$ be an amenable virtual core subalgebra. Write $N = B \rtimes \Lambda$. Since $\Lambda < \Gamma$ has finite index, we have $A \prec N$. We apply \cref{lemma:lem2} and find the virtual core subalgebra $\theta(Az) \subset p N^n p$. Since $\Lambda \in \cC$, it follows that $\theta(Az) \prec B$, but then also $A \prec B$, as in the final paragraph of the proof of \cref{proposition:class-C-stable-ext}.
\end{proof}

\section{A non-isomorphism result for almost periodic crossed products}\label{section:non-isomorphism}

We prove the following general rigidity theorem for crossed products of groups in the class $\cC$ acting by state preserving automorphisms of an amenable factor with almost periodic state.
This will be a crucial ingredient to distinguish between type $\III$ Bernoulli crossed products.

\begin{theorem}\label{thm.non-iso-class-C-type-III}
For $i=0,1$, let $\Lambda_i$ be icc groups in the class $\cC$. Let $(P_i,\phi_i)$ be amenable factors equipped with normal faithful almost periodic states having a factorial discrete decomposition. Let $\Lambda_i \actson (P_i,\phi_i)$ be outer, state preserving actions such that the crossed products $P_i \rtimes \Lambda_i$ are full.

Then the following two statements are equivalent.
\begin{enumerate}[\upshape(i)]
\item The crossed products $P_i \rtimes \Lambda_i$ are isomorphic.
\item The groups $\Lambda_0, \Lambda_1$ are isomorphic, the point spectra of $\Delta_{\phi_0}$ and $\Delta_{\phi_1}$ coincide and there exists a projection $p \in (P_1)_{\phi_1}$, equal to $1$ if the $\phi_i$ are traces, such that $\Lambda_0 \actson (P_0,\phi_0)$ is cocycle conjugate to the reduced cocycle action $(\Lambda_1 \actson P_1)^p$ through a state preserving isomorphism, modulo the group isomorphism $\Lambda_0 \cong \Lambda_1$.
\end{enumerate}
\end{theorem}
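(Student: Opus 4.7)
The implication (ii) $\Rightarrow$ (i) follows by building an isomorphism of crossed products from the cocycle conjugacy data, passing through an amplified corner when needed. The substantive direction is (i) $\Rightarrow$ (ii), and my plan is to move to the discrete cores, where the problem becomes one about cocycle actions of the $\Lambda_i$ on the hyperfinite $\II_\infty$ factor, apply the class $\cC$ hypothesis to identify the two canonical amenable subalgebras, and then push the resulting equivariant cocycle conjugacy back down via \cref{lemma:cocycleconjugatecore}.

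As a first step, I would set up the discrete cores. For each $i$, the state $\phi_i \circ E_{P_i}$ on $M_i = P_i \rtimes \Lambda_i$ is almost periodic with centralizer $(P_i)_{\phi_i} \rtimes \Lambda_i$; this is a factor since $(P_i)_{\phi_i}$ is and $\Lambda_i$ acts outerly on it (by \cref{lemma:outer-on-core} together with the icc hypothesis). Because $M_i$ is full, \cite[Lemma~4.8]{connes;almost-periodic-states} identifies $\Sd(M_i)$ with the group $\Gamma_i$ generated by the point spectrum of $\Delta_{\phi_i}$, so any isomorphism $\pi \colon M_0 \to M_1$ forces $\Gamma_0 = \Gamma_1 =: \Gamma$. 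Setting $G = \hat\Gamma$ and $N_i = P_i \rtimes_{\sigma^i} G$, the modular $G$-action extends to $M_i$ fixing the canonical unitaries, yielding the identification $M_i \rtimes_\sigma G = N_i \rtimes \Lambda_i$, with $N_i$ a hyperfinite $\II_\infty$ factor and $\Lambda_i \actson^{\tilde\al^i} N_i$ outer by \cref{lemma:outer-action-on-core}. After absorbing a Connes--Radon--Nikodym cocycle, $\pi$ lifts to an isomorphism $\tilde\pi \colon N_0 \rtimes \Lambda_0 \to N_1 \rtimes \Lambda_1$ that intertwines the dual $\Gamma$-actions $\hat\sigma^i$.

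The heart of the argument is to conjugate $\tilde\pi(N_0)$ onto $N_1$. Choose a finite trace projection $q \in N_0$ and, modifying $\tilde\pi$ by an inner automorphism of $N_1 \rtimes \Lambda_1$, arrange $\tilde\pi(q) = p$ for a finite trace projection $p \in N_1$. The algebras $\tilde\pi(q N_0 q)$ and $p N_1 p$ are then amenable virtual core subalgebras (and in fact regular irreducible subfactors) of the $\II_1$ factor $p(N_1 \rtimes \Lambda_1)p$. Writing $p(N_1 \rtimes \Lambda_1)p$ as the crossed product of $p N_1 p$ by the reduced cocycle action of $\Lambda_1$, the class $\cC$ hypothesis for $\Lambda_1$ delivers $\tilde\pi(q N_0 q) \prec p N_1 p$; the symmetric argument through $\tilde\pi^{-1}$ and $\Lambda_0 \in \cC$ gives the reverse intertwining. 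Since the normalizer quotients contain the icc groups $\Lambda_i$, \cref{lemma:weakcontainmentisunitaryconjug} produces a unitary $u \in p(N_1 \rtimes \Lambda_1)p$ with $u \, \tilde\pi(q N_0 q) \, u^* = p N_1 p$. Composing $\tilde\pi$ with $\Ad u$ and amplifying back to the full $N_i$ yields an isomorphism $\Psi \colon N_0 \to N_1$, a group isomorphism $\delta \colon \Lambda_0 \to \Lambda_1$ forced by outerness of $\tilde\al^i$, and a $1$-cocycle identifying $\tilde\al^0$ with $\tilde\al^1 \circ \delta$; carried out carefully, $\Psi$ is also a cocycle conjugacy for the dual $\Gamma$-actions, so together we have a cocycle conjugacy of the combined $\Lambda_0 \times \Gamma$-actions on the $N_i$.

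Feeding $\Psi$ into \cref{lemma:cocycleconjugatecore} then produces a projection $p \in (P_1)_{\phi_1}$ and a state-preserving cocycle conjugacy between $\Lambda_0 \actson (P_0,\phi_0)$ and the reduced cocycle action $(\Lambda_1 \actson P_1)^p$, modulo $\delta$; when the $\phi_i$ are traces, $\Gamma = \{1\}$, $\Psi$ has modulus one, and $p = 1$. The main obstacle I anticipate is precisely ensuring that the unitary conjugation of $\tilde\pi(q N_0 q)$ onto $p N_1 p$ can be organized so that the resulting $\Psi$ remains a $\hat\sigma$-equivariant cocycle conjugacy, since this equivariance is exactly what makes \cref{lemma:cocycleconjugatecore} applicable. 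Class $\cC$ together with \cref{lemma:weakcontainmentisunitaryconjug} is tailored for the unique crossed product decomposition, but the $\Gamma$-equivariance has to be tracked throughout, as does the interplay between the modulus of $\tilde\pi$ and the choice of reduction projections $q$, $p$.
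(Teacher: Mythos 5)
Your proposal is correct and follows essentially the same route as the paper: pass to the discrete decompositions $N_i \rtimes \Lambda_i$, extend the isomorphism via the Connes--Radon--Nikodym theorem, use the class $\cC$ hypothesis on finite corners together with \cref{lemma:weakcontainmentisunitaryconjug} to conjugate the images of the $N_i$, and feed the resulting $\Lambda \times \Gamma$-cocycle conjugacy into \cref{lemma:cocycleconjugatecore}. The equivariance issue you flag at the end is resolved exactly along the lines you anticipate: the paper passes to $M_i \rtimes \Gamma = N_i \rtimes (\Lambda_i \times \Gamma)$ and observes that the adjusted isomorphism $\Theta$ satisfies $\Theta(u_\gamma) \in (N_1 \rtimes \Lambda_1) u_\gamma$, which together with $\Theta(N_0) = N_1$ automatically yields the required cocycle conjugacy of the $\Lambda_i \times \Gamma$-actions modulo a group isomorphism of the form $\delta_0 \times \operatorname{id}$ (using that $\Lambda_1$ has trivial center).
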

\begin{proof}
Denote by $G_i$ the natural compact groups acting by the modular automorphisms $(\si^i_s)_{s \in G_i}$ on $P_i$ and on $P_i \rtimes \Lambda_i$. Write $N_i = P_i \rtimes G_i$. The discrete decomposition of $P_i \rtimes \Lambda_i$ is then given as $N_i \rtimes \Lambda_i$. Since the groups $\Lambda_i$ have trivial center, it follows from \cref{lemma:outer-action-on-core} that the actions $\Lambda_i \actson N_i$ are outer. In particular, $P_i \rtimes \Lambda_i$ is a full factor with a factorial discrete decomposition. Therefore, $\Sd(P_i \rtimes \Lambda_i)$ equals the point spectrum of $\Delta_{\phi_i}$. In particular, the factor $P_i \rtimes \Lambda_i$ is of type $\II_1$ if $\phi_i$ is a trace and of type $\III$ if $\phi_i$ is not a trace.

Assume first that (ii) holds. In the tracial case, the $\II_1$ factors $P_i \rtimes \Lambda_i$ follow isomorphic. In the non-tracial case, it follows that $P_0 \rtimes \Lambda_0$ is isomorphic to $p(P_1 \rtimes \Lambda_1)p$. But in that case, the factors $P_i \rtimes \Lambda_i$ are of type $\III$ so that also $P_0 \rtimes \Lambda_0 \cong P_1 \rtimes \Lambda_1$. This means that (i) holds.

Assume next that (i) holds and let $\psi : P_0 \rtimes \Lambda_0 \recht P_1 \rtimes \Lambda_1$ be a $\star$-isomorphism. In particular, the factors $P_i \rtimes \Lambda_i$ have the same $\Sd$-invariant, so that the point spectra of $\Delta_{\phi_i}$ coincide. In the case where the $\phi_i$ are traces, because the groups $\Lambda_i$ are icc and belong to the class $\cC$, it follows from \cref{lemma:weakcontainmentisunitaryconjug} that $\psi(P_0)$ is unitarily conjugate to $P_1$. Since the actions $\Lambda_i \actson P_i$ are outer, this means that $\Lambda_0 \cong \Lambda_1$ and that the actions $\Lambda_i \actson P_i$ are cocycle conjugate.

So assume that the $\phi_i$ are not traces. Since the point spectra of $\Delta_{\phi_i}$ coincide, it follows that $G_0 = G_1$ and we denote this compact group as $G$. By \cite[Lemma 4.2]{connes;almost-periodic-states}, $\psi$ is a cocycle conjugacy between the modular automorphism groups $(\si^0_s)_{s \in G}$ and $(\si^1_s)_{s \in G}$ and therefore extends to a $\star$-isomorphism $\Psi : M_0 \recht M_1$ between the crossed products $M_i = (P_i \rtimes \Lambda_i) \rtimes G = N_i \rtimes \Lambda_i$. The assumption that $(P_i,\phi_i)$ is amenable with factorial discrete decomposition means that $N_i$ is the hyperfinite $\II_\infty$ factor. We have seen in the first paragraph of the proof that the actions $\Lambda_i \actson N_i$ are outer. Since the actions $\Lambda_i \actson P_i$ are state preserving, the action of $\Lambda_i$ on $N_i$ equals the identity on $L(G) \subset N_i$.

We claim that $\Psi(N_0)$ and $N_1$ are unitarily conjugate inside $M_1$. Take a projection $p_0 \in L(G)$ of finite trace. Then $\Psi(p_0)$ is a projection of finite trace in the $\II_\infty$ factor $N_1 \rtimes \Lambda_1$. After a unitary conjugacy of $\Psi$, we find a projection $p_1 \in L(G)$ of finite trace such that $\Psi(p_0) \leq p_1$. Since the projections $p_i$ are $\Lambda_i$-invariant, we have
$$p_i M_i p_i = p_i N_i p_i \rtimes \Lambda_i \; .$$
The restriction of $\Psi$ to $p_0 M_0 p_0$ thus yields a $\star$-isomorphism of $p_0 N_0 p_0 \rtimes \Lambda_0$ onto a corner of $p_1 N_1 p_1 \rtimes \Lambda_1$. Because the groups $\Lambda_i$ are icc and belong to the class $\cC$, it follows from \cref{lemma:weakcontainmentisunitaryconjug} that $\Psi(p_0 N_0 p_0)$ is unitarily conjugate to a corner of $N_1$. Since the $N_i$ are $\II_\infty$ factors, the claim follows.

By the claim in the previous paragraph, we can choose a unitary $u \in \cU(M_1)$ such that $u \Psi(N_0) u^\star = \Psi(N_1)$. In particular, $\Lambda_0 \cong \Lambda_1$ and $\Ad u \circ \Psi$ is a cocycle conjugacy between $\Lambda_0 \actson N_0$ and $\Lambda_1 \actson N_1$.

Denote $\Gamma = \hat{G}$ and let $\hat{\sigma}^i$ be the dual, trace scaling action of $\Gamma$ on $M_i = (P_i \rtimes \Lambda_i) \rtimes G$. By construction, $\Psi \circ \hat{\sigma}^0_\gamma = \hat{\sigma}^1_\gamma \circ \Psi$ for all $\gamma \in \Gamma$. Therefore, $\Psi$ further extends to a $\star$-isomorphism $\Psitil : M_0 \rtimes \Gamma \recht M_1 \rtimes \Gamma$ satisfying $\Psitil(u_\gamma) = u_\gamma$ for all $\gamma \in \Gamma$. Write $\Theta = \Ad u \circ \Psitil$. Note that we can view $M_i \rtimes \Gamma$ as $N_i \rtimes (\Lambda_i \times \Gamma)$, and that the actions $\Lambda_i \times \Gamma \actson N_i$ are outer because the action of $\Gamma$ is trace scaling and the action of $\Lambda_i$ is trace preserving and outer. By construction,
$$\Theta(N_0) = N_1 \quad , \quad \Theta(N_0 \rtimes \Lambda_0) = N_1 \rtimes \Lambda_1 \quad\text{and}\quad \Theta(u_\gamma) \in (N_1 \rtimes \Lambda_1) u_\gamma \; .$$
This means that the restriction of $\Theta$ to $N_0$ is a cocycle conjugacy between the actions $\Lambda_i \times \Gamma \actson N_i$, modulo a group isomorphism $\delta : \Lambda_0 \times \Gamma \recht \Lambda_1 \times \Gamma$ satisfying $\delta(\Lambda_0) = \Lambda_1$ and $\delta(e,\gamma) \in \Lambda_1 \times \{\gamma\}$. Since $\Lambda_i$ has trivial center, this means that $\delta(g,\gamma) = (\delta_0(g),\gamma)$ for all $g \in \Lambda_0$, $\gamma \in \Gamma$ and a group isomorphism $\delta_0 : \Lambda_0 \recht \Lambda_1$.

It now follows from \cref{lemma:cocycleconjugatecore} that there exists a projection $p \in (P_1)_{\phi_1}$ such that $\Lambda_0 \actson (P_0,\phi_0)$ is cocycle conjugate to the reduced cocycle action $(\Lambda_1 \actson P_1)^p$ through a state preserving isomorphism, modulo the group isomorphism $\delta_0 : \Lambda_0 \recht \Lambda_1$. So we have proved that (ii) holds.
\end{proof}

\section{Proofs of theorems \ref{thm.A}, \ref{thm.B} and \ref{thm.C}}

Theorems \ref{thm.A} and \ref{thm.B} follow from \cref{thm:cocycle-conjugacy-free-products} and the following result.

\begin{theorem}\label{thm.general-rigidity}
For $i=0,1$, let $(P_i,\phi_i)$ be amenable factors equipped with a normal faithful state. Let $\Lambda_i$ be icc groups in the class $\cC$. Assume that $\phi_0$ is almost periodic and that $P_0^{\Lambda_0} \rtimes \Lambda_0$ is isomorphic with $P_1^{\Lambda_1} \rtimes \Lambda_1$. Then the following holds.
\begin{enumerate}[\upshape(i)]
\item Also $\phi_1$ is almost periodic and $\Gamma(P_0,\phi_0) = \Gamma(P_1,\phi_1)$.
\item The groups $\Lambda_i$ are isomorphic.
\item If $\Lambda_0$ is a direct product of two icc groups in the class $\cC$, then the actions $\Lambda_i \actson P_i^{\Lambda_i}$ are conjugate through a state preserving isomorphism.
\end{enumerate}
\end{theorem}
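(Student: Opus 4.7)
The strategy will be to reduce all three parts of the theorem to an application of \cref{thm.non-iso-class-C-type-III} to the Bernoulli crossed products $M_i = P_i^{\Lambda_i} \rtimes \Lambda_i$. First I verify the hypotheses of that theorem for $M_0$: \cref{lemma:centralizerfactor} gives that $(P_0^{\Lambda_0},\phi_0^{\Lambda_0})$ has a factorial discrete decomposition, \cref{lemma:bernoulli-outer} gives that the Bernoulli action is outer, and \cref{lemma.full-bernoulli} gives that $M_0$ is a full factor: its hypothesis (i) is vacuous because $\Lambda_0$ acts on itself by left translation with trivial stabilizers, and $\Lambda_0 \actson \Lambda_0$ admits no invariant mean since $\Lambda_0 \in \cC$ is nonamenable. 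The same application of \cref{lemma.full-bernoulli} shows that $M_1$ is a full factor as well, regardless of whether $\phi_1$ is almost periodic.

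For part (i) the crux is to show that $\phi_1$ is necessarily almost periodic. The key tool will be the description of Connes's $\tau$-invariant furnished by the final assertion of \cref{lemma.full-bernoulli}: for both $i=0,1$, the topology $\tau(M_i)$ on $\RR$ is the weakest topology making $t \mapsto \sigma_t^{\phi_i}$ continuous as a map $\RR \to \Aut(P_i)$. Since $M_0 \cong M_1$, these two topologies on $\RR$ agree. For $M_0$, since $\phi_0$ is almost periodic, the modular flow factors as $\RR \to G_0 \to \Aut(P_0)$ where $G_0 = \widehat{\Gamma(P_0,\phi_0)}$ is the associated compact group; the second arrow is an injective continuous homomorphism, hence a homeomorphism onto its image by compactness of $G_0$. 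Consequently $\tau(M_0)$ coincides with the pullback to $\RR$ of the topology on $G_0$, and thus $\sigma^{\phi_1}$ is continuous with respect to the $G_0$-topology on $\RR$. This forces $\sigma^{\phi_1}$ to extend to a continuous action of $G_0$ on $P_1$ fixing $\phi_1$, which by the converse mentioned in \cref{section:almostperiodicweights} implies that $\phi_1$ is almost periodic with point spectrum contained in $\Gamma(P_0,\phi_0)$. Running the same argument in the opposite direction, now that both states are known to be almost periodic, yields the equality $\Gamma(P_0,\phi_0) = \Gamma(P_1,\phi_1)$.

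With (i) in hand, both Bernoulli crossed products satisfy the hypotheses of \cref{thm.non-iso-class-C-type-III}, which immediately yields $\Lambda_0 \cong \Lambda_1$ together with a state preserving cocycle conjugacy between $\Lambda_0 \actson (P_0^{\Lambda_0},\phi_0^{\Lambda_0})$ and the reduced cocycle action $(\Lambda_1 \actson P_1^{\Lambda_1})^p$ for some projection $p$ in the centralizer. Since that centralizer is a factor by \cref{lemma:centralizerfactor}, the reduction can be absorbed into a cocycle conjugacy of the unreduced actions, proving (ii). For (iii), the additional assumption that $\Lambda_0$ is a direct product of two icc groups in $\cC$ allows us to invoke the noncommutative cocycle superrigidity theorem proved in the appendix, adapting Popa's spectral gap argument from \cite{popa;superrigidity-malleable-actions-spectral-gap}. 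That result trivializes the $1$-cocycle appearing in the cocycle conjugacy and hence upgrades it to an honest conjugacy through a state preserving isomorphism.

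The main obstacle, relative to the machinery already developed in the paper, is the deduction in (i) that $\phi_1$ must be almost periodic. This is where the argument genuinely exploits the Bernoulli product structure on $M_1$ beyond mere fullness, via the clean description of the $\tau$-invariant in \cref{lemma.full-bernoulli}; without such a description, isomorphism of the crossed products would only transport the $\Sd$-invariant and would not a priori force almost periodicity of the particular state $\phi_1$.
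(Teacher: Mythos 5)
Your proposal follows the same route as the paper's proof: fullness and the $\tau$-invariant description from \cref{lemma.full-bernoulli} for part (i), then \cref{thm.non-iso-class-C-type-III} (whose hypotheses you verify with the same lemmas) for part (ii), and the cocycle superrigidity result \cref{cor.special-case} for part (iii). Your treatment of (i) is in fact more detailed than the paper's, which simply asserts that equality of the $\tau$-invariants yields the conclusion; your argument---that $\tau(M_0)$ is the pullback along $\iota : \RR \recht G_0$ of the topology of the compact group $G_0$, so that $\sigma^{\phi_1}$ extends to a $\phi_1$-preserving action of $G_0$, forcing almost periodicity and $\Gamma(P_1,\phi_1) \subseteq \Gamma(P_0,\phi_0)$, with the reverse inclusion by symmetry---is a correct filling-in of that step (one should add a word on why the extension to $G_0$ exists, e.g.\ by completeness of the Polish group $\Aut(P_1)$ and density of $\iota(\RR)$). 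One sentence is incorrect, though harmless: the reduction by the projection $p$ cannot be ``absorbed into a cocycle conjugacy of the unreduced actions'' merely because the centralizer is a factor. That centralizer is a $\II_1$ factor, in which a projection of trace $<1$ is not equivalent to $1$; removing $p$ requires an isometry lying in a spectral subspace $(P_0^{\Lambda_0})_{\vphi_0,\lambda}$ that intertwines the cocycles, which is exactly what \cref{cor.special-case} supplies. Since part (ii) only asserts the group isomorphism, which \cref{thm.non-iso-class-C-type-III} gives directly, and your part (iii) correctly invokes the superrigidity corollary (stated for a general support projection $p$ and producing the required $w$ with $ww^\star = p$ and $w^\star w = 1$), this misstatement does not affect the validity of the overall argument.
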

\begin{proof}
Since $\Lambda_i$ belongs to the class $\cC$, certainly $\Lambda_i$ is nonamenable. By \cref{lemma.full-bernoulli}, the factors $P_i^{\Lambda_i} \rtimes \Lambda_i$ are full and their $\tau$-invariant is the weakest topology that makes the map $t \mapsto \sigma_t^{\phi_i}$ continuous. Since the factors $P_i^{\Lambda_i} \rtimes \Lambda_i$ are isomorphic, they have the same $\tau$-invariant and we conclude that (i) holds.

Write $\vphi_i = \phi_i^{\Lambda_i}$. By \cref{lemma:centralizerfactor}, the factors $(P_i^{\Lambda_i},\vphi_i)$ have a factorial discrete decomposition. By \cref{lemma:bernoulli-outer}, the actions $\Lambda_i \actson^{\alpha_i} P_i^{\Lambda_i}$ are outer. Above, we already mentioned that the crossed products are full factors. By \cref{thm.non-iso-class-C-type-III}, (ii) holds and we find a projection $p \in (P_0^{\Lambda_0})_{\vphi_0}$ such that the action $\Lambda_1 \actson P_1^{\Lambda_1}$ is cocycle conjugate to the reduction of $\Lambda_0 \actson P_0^{\Lambda_0}$ by $p$, modulo the isomorphism $\Lambda_0 \cong \Lambda_1$, through a generalized $1$-cocycle $(w_g)_{g \in \Lambda_0}$ for the action $\Lambda_0 \actson P_0^{\Lambda_0}$ having support projection $p$.

Assume now that $\Lambda_0$ is moreover a direct product of two nonamenable groups. We apply \cref{cor.special-case} to the generalized $1$-cocycle $(w_g)_{g \in \Lambda_0}$. Since the action $\Lambda_1 \actson P_1^{\Lambda_1}$ has no nontrivial finite dimensional globally invariant subspaces, it follows from \cref{cor.special-case} that $w_g = \chi(g) w \al^0_g(w^\star)$ for all $g \in \Lambda_0$, where $\chi : \Lambda_0 \recht \T$ is a character and $w \in (P_0^{\Lambda_0})_{\vphi_0,\lambda}$ satisfies $ww^\star = p$ and $w^\star w = 1$. Conjugating with $w$, we conclude that the actions $\Lambda_i \actson P_i^{\Lambda_i}$ are conjugate through a state preserving isomorphism.
\end{proof}

We finally prove \cref{thm.C}.

\begin{proof}[Proof of \cref{thm.C}]
For $i=0,1$, let $\Lambda_i$ be icc groups in the class $\cC$ without nontrivial central sequences. Let $(P_i,\phi_i)$ be amenable factors with a normal faithful almost periodic state. Obviously, if $\Lambda_0 \cong \Lambda_1$ and $(P_0,\phi_0) \cong (P_1,\phi_1)$ through a state preserving isomorphism, then the von Neumann algebras $P_i^{\Lambda_i} \rtimes (\Lambda_i \times \Lambda_i)$ are isomorphic. Assume conversely that the $P_i^{\Lambda_i} \rtimes (\Lambda_i \times \Lambda_i)$ are isomorphic.

Since $\Lambda_i$ has trivial center, every nontrivial element of $\Lambda_i \times \Lambda_i$ moves infinitely many elements of $\Lambda_i$. By \cref{lemma:bernoulli-outer}, the Bernoulli action of $\Lambda_i \times \Lambda_i$ on $(P_i,\phi_i)^{\Lambda_i}$ is outer. By \cref{lemma:centralizerfactor}, the almost periodic factor $(P_i,\phi_i)^{\Lambda_i}$ has a factorial discrete decomposition. Finally, since $\Lambda_i$ has no nontrivial central sequences, there exists a finite subset $\cF_i \subset \Lambda_i$ whose centralizer is trivial. Then, the stabilizer of $\{e\} \cup \cF_i$ under the action of $\Lambda_i \times \Lambda_i$ on $\Lambda_i$ is trivial. Thus, by \cref{lemma.full-bernoulli}, the crossed products $P_i^{\Lambda_i} \rtimes (\Lambda_i \times \Lambda_i)$ are full.

Proceeding exactly as in the proof of \cref{thm.general-rigidity}, using in particular \cref{cor.special-case} and the weak mixing of $\Lambda_1 \times \Lambda_1 \actson P_1^{\Lambda_1}$, we find an isomorphism $\delta : \Lambda_0 \times \Lambda_0 \recht \Lambda_1 \times \Lambda_1$ and a state preserving isomorphism $\psi : P_0^{\Lambda_0} \recht P_1^{\Lambda_1}$ satisfying $\psi \circ \al^0_g = \al^1_{\delta(g)} \circ \psi$ for all $g \in \Lambda_0 \times \Lambda_0$. Here, $\al^i$ denotes the Bernoulli action.

We continue with an argument from \cite[Proof of Theorem 5.4]{popa-vaes;strong-rigidity-generalized-bernoulli-actions}. Write $\Delta_i = \{(g,g) \mid g \in \Lambda_i\}$. If $\Sigma < \Lambda_i \times \Lambda_i$ is a subgroup such that $\Sigma \cdot g$ is infinite for all $g \in \Lambda_i$, then the multiples of $1$ are the only $(\al^i_g)_{g \in \Sigma}$-invariant elements of $P_i^{\Lambda_i}$. Denote by $\pi_e : P_i \recht P_i^{\Lambda_i}$ the embedding as the $e$-th tensor factor. Since every element of $\pi_e(P_0)$ is $\Delta_0$-invariant, it follows that there exists a $g \in \Lambda_1$ such that $\delta(\Delta_0) \cdot g$ is finite. Composing $\psi$ with $\al^1_{(e,g)}$ and $\delta$ with $\Ad (e,g)$, we may assume that $g = e$. So we find a finite index subgroup $\Delta'_0 < \Delta_0$ such that $\delta(\Delta'_0) \subset \Delta_1$.

But then, every element of $\psi^{-1}(\pi_e(P_1))$ is $\Delta'_0$-invariant. Since $\Lambda_0$ is icc, the sets $\Delta_0 \cdot g$ are infinite for all $g \neq e$. So also $\Delta'_0 \cdot g$ is infinite for all $g \neq e$. Therefore, all $\Delta'_0$-invariant elements of $P_0^{\Lambda_0}$ belong to $\pi_e(P_0)$. We conclude that $\psi^{-1}(\pi_e(P_1)) \subset \pi_e(P_0)$. Since the elements of $\delta^{-1}(\Delta_1)$ leave every element of $\psi^{-1}(\pi_e(P_1))$ fixed, it follows that $\delta^{-1}(\Delta_1) \subset \Delta_0$. In particular, every element of $\psi(\pi_e(P_0))$ is $\Delta_1$-invariant and thus belongs to $\pi_e(P_1)$. We have proved that $\psi(\pi_e(P_0)) = \pi_e(P_1)$. Then also $\delta(\Delta_0) = \Delta_1$ and the theorem is proved.
\end{proof}

\section*{Appendix: Popa's cocycle superrigidity for Connes-St{\o}rmer Bernoulli actions}
\refstepcounter{section}

Let $(M,\vphi)$ be a von Neumann algebra equipped with a normal faithful almost periodic state and an action $(\al_g)_{g \in \Lambda}$ of a group $\Lambda$ by state preserving automorphisms. A \emph{generalized $1$-cocycle} for $(\al_g)_{g \in \Lambda}$ is a family of partial isometries $w_g \in M_\vphi$ such that $w_g w_g^\star = p$, independently of $g \in \Lambda$, and $w_g^\star w_g = \al_g(p)$, satisfying
$$w_{gh} = \Om(g,h) w_g \al_g(w_h) \quad\text{for all}\;\; g,h \in \Lambda \; ,$$
where $\Om : \Lambda \times \Lambda \recht \T$ is a scalar $2$-cocycle. We call $p$ the support projection of $(w_g)_{g \in \Lambda}$. Note that $p \in M_\vphi$.

In \cite{popa;rigidity-non-commutative-bernoulli}, Sorin Popa proved a cocycle superrigidity theorem for the Connes-St{\o}rmer Bernoulli actions of property (T) groups and of $w$-rigid groups, i.e.~groups admitting an infinite normal subgroup with the relative property (T). In \cite{popa;cocycle-superrigidity-malleable-actions}, cocycle superrigidity was established for the `classical' (commutative) Bernoulli actions of $w$-rigid groups with arbitrary countable target groups, or even target groups in Popa's class $\mathcal{U}_\text{fin}$. Both in \cite{popa;rigidity-non-commutative-bernoulli,popa;cocycle-superrigidity-malleable-actions}, the rigidity is provided by Kazhdan's (relative) property (T). In \cite{popa;superrigidity-malleable-actions-spectral-gap}, Popa discovered that cocycle superrigidity can also be proved using his spectral gap rigidity, typically for groups $\Lambda$ that arise as the direct product of an infinite group and a nonamenable group. In \cite{popa;superrigidity-malleable-actions-spectral-gap}, only the commutative Bernoulli actions are treated. Mutatis mutandis, the methods of \cite{popa;superrigidity-malleable-actions-spectral-gap} and \cite{popa;rigidity-non-commutative-bernoulli} can be combined to obtain cocycle superrigidity for Connes-St{\o}rmer Bernoulli actions of product groups with general target groups. For the sake of completeness, we include a complete argument in this appendix and benefit from this occasion to state and prove the most general result that can be obtained along these lines.

\begin{theorem}\label{thm.cocycle-cs}
Let $(P,\phi)$, $(Q,\vphi)$ be von Neumann algebras with an almost periodic normal faithful state. Let $\Lambda \actson^\alpha (Q,\vphi)$ be a state preserving action. Assume that $\Lambda$ also acts on the countable set $I$ and consider the (generalized) Bernoulli action $\Lambda \actson (M,\vphi) = (P,\phi)^I$. We also denote this action by $\al$, as well as the diagonal action
$$\al : \Lambda \actson M \ovt Q \; .$$
Assume that $\Lambda$ is generated by the commuting subgroups $\Gamma$ and $\Sigma$ satisfying the following two properties: the action $\Gamma \actson I$ has no invariant mean, and the action $\Sigma \actson I$ has infinite orbits.

Let $p \in (M \ovt Q)_\vphi$ be a projection and $w_g \in (M \ovt Q)_\vphi$ a generalized $1$-cocycle with support projection $p$.
Then, $p = \sum_k p_k$ for projections $p_k \in (M \ovt Q)_\vphi$ such that $p_k w_g = w_g \al_g(p_k)$ for all $g \in \Lambda$ and the generalized $1$-cocycle $(p_k w_g)_{g \in \Lambda}$ is cohomologous to a generalized $1$-cocycle taking values in an amplification of $Q_\vphi$.

More precisely, there exist Hilbert spaces $H_k$, positive numbers $\lambda_k > 0$, projections $q_k \in Q_\vphi \ovt B(H_k)$ with $(\vphi \ot \Tr)(q_k) < \infty$ and elements $v_k \in (M \ovt Q)_{\vphi,\lambda_k} \ovt \Hbar$ such that
$$v_k v_k^\star = p_k \;\;\text{and}\;\; v_k^\star v_k = 1 \ot q_k \quad , \quad v_k^\star \, w_g \, \al_g(v_k) = 1 \ot W_{k,g}$$
where $(W_{k,g})_{g \in \Lambda}$ is a generalized $1$-cocycle for the (amplified) action $\al_g$ on $Q_\vphi \ovt B(H_k)$ with support projection $q_k$.
\end{theorem}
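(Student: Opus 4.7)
My approach follows Popa's spectral gap / $s$-malleable deformation strategy of \cite{popa;superrigidity-malleable-actions-spectral-gap}, combined with the noncommutative techniques of \cite{popa;rigidity-non-commutative-bernoulli}, adapted to the generalized-cocycle, non-tracial setting. The first step is to build a state-preserving dilation of the base: set $(\Ptil, \tilde\phi) := (P, \phi) \ast (L\ZZ, \tau)$ and apply Popa's rotation deformation to obtain a continuous family $(\theta_t)_{t \in \RR}$ of $\tilde\phi$-preserving automorphisms of $\Ptil$ with $\theta_0 = \id$, together with a $\tilde\phi$-preserving involution $\beta$ of $\Ptil$ satisfying $\beta|_P = \id$, $\beta \theta_t \beta = \theta_{-t}$ and $E_P \circ \theta_1 = \phi(\fdot)1$. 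Amplifying tensor factor by tensor factor yields $(\Mtil, \tilde\vphi) := (\Ptil, \tilde\phi)^I \supset (M, \vphi)$ and then $(\Mtil \ovt Q, \tilde\vphi \ot \vphi) \supset (M \ovt Q, \vphi)$, with $\theta_t$ and $\beta$ commuting with the diagonal Bernoulli action $\al$ of $\Lambda$.

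The main step is the spectral gap / untwisting argument carried out along $\Gamma$. By \cref{lem.mean}, the assumption that $\Gamma \actson I$ has no invariant mean implies that the unitary representation $\Gamma \actson L^2(M, \vphi) \ominus \C 1$ admits no almost invariant vectors; a standard amplification yields the analogous statement for the $\Gamma$-representation on $L^2((\Mtil \ominus M) \ovt Q)$ viewed as a $Q$-bimodule. On the other hand, a direct computation using the cocycle identity and the $\al$-equivariance of $\theta_t$ shows that the vectors $\theta_t(w_g) - w_g$ are $\Gamma$-uniformly-almost-invariant as $t \to 0$, which together with the absence of almost invariant vectors forces
\[
\lim_{t \to 0} \sup_{g \in \Gamma} \|\theta_t(w_g) - w_g\|_\vphi^\sharp = 0 .
\]
Iterating this uniform closeness via the $s$-malleability relation $\beta \theta_t \beta = \theta_{-t}$, in the spirit of \cite[Proposition 4.4]{popa;rigidity-non-commutative-bernoulli}, produces a partial isometry $V \in \Mtil \ovt Q$, lying in a specific spectral subspace of the modular group of $\tilde\vphi \ot \vphi$, such that $V^\star w_g \al_g(V)$ is fixed by all $\theta_t$ and by $\beta$. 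Since $E_P \circ \theta_1 = \phi(\fdot)1$ amplifies to $E_M \circ \theta_1 = \vphi(\fdot)1$, the common fixed-point subalgebra of the full malleable family inside $\Mtil$ is $\C 1$, so that $V^\star w_g \al_g(V)$ is concentrated in $Q$ tensored with a multiplicity space. Decomposing $V$ along the $\tilde\vphi$-orthogonal eigenspaces of $\Mtil$ over $M$ produces the direct sum $p = \sum_k p_k$ together with partial isometries $v_k \in (M \ovt Q)_{\vphi, \lambda_k} \ovt \Hbar_k$ satisfying $v_k v_k^\star = p_k$, $v_k^\star v_k = 1 \ot q_k$ and $v_k^\star w_g \al_g(v_k) = 1 \ot W_{k,g}$ for all $g \in \Gamma$, with $(W_{k,g})_{g \in \Gamma}$ a generalized $1$-cocycle for the amplified action of $\Gamma$ on $Q_\vphi \ovt B(H_k)$ carrying the same scalar $2$-cocycle $\Omega$.

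The third step extends the $\Gamma$-untwisting to all of $\Lambda = \langle \Gamma, \Sigma\rangle$ using that $\Sigma$ commutes with $\Gamma$ and acts with infinite orbits on $I$. The infinite-orbit hypothesis makes the Bernoulli action $\Sigma \actson (M, \vphi)$ weakly mixing, so that the fixed-point algebra of $\Sigma$ acting diagonally on $M \ovt Q$ is exactly $1 \ot Q$. Applying $\al_\sigma$ for $\sigma \in \Sigma$ to $v_k$ yields another untwisting of the $\Gamma$-cocycle $(w_g)_{g \in \Gamma}$; by a uniqueness argument (the ratio of two untwistings commutes with the range of the untwisted cocycle, hence lies in a $\Gamma$-relative commutant forced into $1 \ot Q \ovt B(H_k)$ by the above), the combination $v_k^\star w_\sigma \al_\sigma(v_k)$ automatically lies in $1 \ot (Q_\vphi \ovt B(H_k))$. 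This extends $(W_{k,g})$ to a generalized $1$-cocycle for the full group $\Lambda$ and yields the conclusion.

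The main obstacle lies in the modular-spectral bookkeeping of the untwisting step. In the almost periodic, non-tracial setting the intertwiner $V$ produced by spectral gap need not live in the centralizer of $\tilde\vphi \ot \vphi$ but only in a direct sum of its spectral subspaces, indexed by the point spectrum of the modular operator; the decomposition $p = \sum_k p_k$ with associated moduli $\lambda_k$ is the algebraic reflection of this spectral decomposition. The handling of generalized (partial-isometry-valued, twisted) cocycles adds bookkeeping over the unitary-valued case of \cite{popa;superrigidity-malleable-actions-spectral-gap}, but the core of the argument is a direct transcription of Popa's spectral gap method to the Connes-St{\o}rmer Bernoulli setting.
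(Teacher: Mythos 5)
Your overall strategy (Popa's malleable deformation $(\theta_t)$ of the Connes--St{\o}rmer Bernoulli action, spectral gap from $\Gamma$, weak mixing from $\Sigma$) is the right one and is the one the paper uses, but the spectral gap step is misapplied and the roles of $\Gamma$ and $\Sigma$ are interchanged in a way that breaks the argument. The absence of an invariant mean for $\Gamma \actson I$ yields (via \cref{lem.mean}) an inequality of the form \eqref{eq.claim}: every $\xi \in L^2((\Mtil \ominus M)\ovt Q)$ is controlled by $\sum_i \|\mu_i \xi - w_{g_i}\al_{g_i}(\xi)\al_h(w_{g_i}^\star)\|_\vphi^2$ for finitely many $g_i \in \Gamma$ and arbitrary $h \in \Sigma$. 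One then feeds in $\xi_h = \theta_t(w_h) - E_{M\ovt Q}(\theta_t(w_h))$ for $h \in \Sigma$: the identity $\kappa(g,h)\,w_h = w_g\,\al_g(w_h)\,\al_h(w_g^\star)$ (which uses that $\Gamma$ and $\Sigma$ commute), together with the \emph{pointwise} convergence $\theta_t(w_{g_i}) \to w_{g_i}$ for the finitely many $g_i$, shows that $\xi_h$ nearly satisfies the twisted invariance and hence is uniformly small over $h \in \Sigma$. So the uniform conclusion is over $\Sigma$, not $\Gamma$, and it says that $\theta_t(w_h)$ is uniformly close to $M \ovt Q$ (whence, by transversality, $\vphi(\theta_t(w_h)w_h^\star) \geq \tfrac12 \vphi(p)$ as in \eqref{eq.aim2}); your displayed claim $\lim_{t\to 0}\sup_{g\in\Gamma}\|\theta_t(w_g)-w_g\|_\vphi^\sharp = 0$ does not follow from spectral gap, and no mechanism in the method produces uniform deformation control on the spectral gap group itself. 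Consequently the cocycle must be untwisted along $\Sigma$ first and only afterwards transported to $\Gamma$ by commutation and \cref{lem.weak-mixing} --- the reverse of your order.

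The second, independent gap is that you assert rather than prove the passage from the deformation intertwiner $V$ (satisfying $V w_h = \theta_1(w_h)\al_h(V)$ on $\Sigma$) to the untwisting into $Q_\vphi \ovt B(H)$. The statement that ``$V^\star w_g \al_g(V)$ is fixed by all $\theta_t$ and by $\beta$, hence concentrated in $Q$ tensored with a multiplicity space'' is exactly the hardest part of the theorem, and it is not a consequence of $E_M \circ \theta_1 = \vphi(\fdot)1$. In the non-tracial, generalized-cocycle setting one must (a) use $V$ and a reduced-word decomposition of $\Ptil = P \star L\ZZ$ to rule out a sequence $h_n \in \Sigma$ with $h_n \cdot i \to \infty$ along which all matrix coefficients $(\vphi\ot\id)((x^\star\ot 1)w_{h_n}(\al_{h_n}(y)\ot 1))$ vanish; (b) convert this into a nonzero projection of finite weight $\vphih$ in $B(L^2(M))\ovt Q$ fixed by the twisted $\Sigma$-action; and (c) cut by the spectral projections $P_\lambda$ of the modular operator to extract a finite right $Q_\vphi$-module sitting inside a single spectral subspace $(M\ovt Q)_{\vphi,\lambda}$ --- this is what produces $v_k \in (M\ovt Q)_{\vphi,\lambda_k}\ovt\Hbar$ and the finiteness of $(\vphi\ot\Tr)(q_k)$. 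The decomposition $p = \sum_k p_k$ then comes from a maximality argument over repeated extractions, not from a modular-spectral decomposition of a single intertwiner. Your final extension step from one subgroup to all of $\Lambda$ is correct in spirit, but it rests on this missing middle.
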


In the special case where $Q = \C 1$ and the twisted action $\Ad w_g \circ \al_g$ is assumed to have a trivial fixed point algebra, much more can be said, see \cref{cor.special-case}.

To prove \cref{thm.cocycle-cs}, we make use of Ioana's variant \cite{ioana;rigidity-results-wreath-product} of Popa's malleable deformation \cite{popa;rigidity-non-commutative-bernoulli} of the Connes-St{\o}rmer Bernoulli action, defined as follows. Consider $(\Ptil,\phi) = (P,\phi)\star(L\Z,\tau)$ and $(\Mtil,\vphi) = (\Ptil,\phi)^I$. Denote by $(u_n)_{n \in \Z}$ the canonical unitary operators in $L\Z$ and denote by $h \in L\Z$ the selfadjoint element with spectrum $[-\pi,\pi]$ satisfying $u_1 = \exp(ih)$. Define $u_t = \exp(ith)$ for all $t \in \mathbb{R}$. Equip $\Mtil$ with the one-parameter group of state preserving automorphisms $(\theta_t)_{t \in \mathbb{R}}$ given as the infinite tensor product $(\Ad u_t)^I$. Define the period $2$ automorphism $\gamma$ of $\Mtil$ as the infinite tensor product of the automorphism of $\Ptil$ satisfying $x \mapsto x$ for all $x \in P$ and $u_1 \mapsto u_{-1}$. Note that $\gamma \circ \theta_t = \theta_{-t} \circ \gamma$. We still denote by $\theta_t$ and $\gamma$ the automorphisms of $\Mtil \ovt Q$ that act as the identity on $Q$. We need the following variant of \cite[Lemma 2.10]{popa;cocycle-superrigidity-malleable-actions}.

\begin{lemma}\label{lem.weak-mixing}
Assume that the action $\Sigma \actson I$ has infinite orbits.
\begin{enumerate}
\item If $(w_h)_{h \in \Sigma}$ is a generalized $1$-cocycle for the action $\al_h$ on $(M \ovt Q)_\vphi$ with support projection $p$ and if $x \in p(\Mtil \ovt Q)p$ satisfies $x = w_h \al_h(x) w_h^\star$ for all $h \in \Sigma$, then $x \in M \ovt Q$.
\item Assume that $H$ is a Hilbert space, $q_1,q_2 \in Q_\vphi \ovt B(H)$ are projections with $(\vphi \ot \Tr)(q_i) < \infty$ and $(W_{i,h})_{h \in \Sigma}$ are generalized $1$-cocycles for the (amplified) action $\al_h$ on $Q_\vphi \ovt B(H)$ with support projection $q_i$ and the same scalar $2$-cocycle $\Om$. If $x \in M \ovt q_1(Q \ovt B(H))q_2$ satisfies $x = (1 \ot W_{1,h}) \al_h(x) (1 \ot W_{2,h}^\star)$ for all $h \in \Sigma$, then $x \in 1 \ovt Q \ovt B(H)$.
\end{enumerate}
\end{lemma}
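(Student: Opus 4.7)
For both parts the strategy is the same: decompose the relevant $L^2$-space into pieces indexed by the finite subsets $\cF \subset I$, show that the fixed-point equation forces the $L^2$-norms of these components to be $\Sigma$-orbit invariant, and conclude via the combinatorial fact that every nonempty finite subset of $I$ has infinite $\Sigma$-orbit.

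For~(1), I would exploit the free product structure $\tilde P = P \star L\ZZ$ to write $L^2(\tilde P) = L^2(P) \oplus L^2(\tilde P)^\circ$, where $L^2(\tilde P)^\circ$ is the closed span of the reduced alternating words containing at least one letter from $L^2_0(L\ZZ)$. Left and right multiplication by $P$ preserves both summands, so taking the tensor product over $i \in I$ relative to $\phi$ yields the orthogonal decomposition
\[
L^2(\tilde M \ovt Q) \;=\; \bigoplus_{\cF \subset I \text{ finite}} \cK_\cF \ot L^2(Q),
\]
with $\cK_\emptyset = L^2(M)$ and, for $\cF \neq \emptyset$, $\cK_\cF$ equal to $L^2(\tilde P)^\circ$ at the positions $i \in \cF$ and to $L^2(P)$ elsewhere. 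Left and right multiplication by any element of $M \ovt Q$ (in particular by $w_h$) preserves each $\cK_\cF \ot L^2(Q)$, while $\al_h$ sends $\cK_\cF \ot L^2(Q)$ bijectively onto $\cK_{h\cF} \ot L^2(Q)$. Writing $x = \sum_\cF x_\cF$ along this decomposition, the equation $x = w_h \al_h(x) w_h^\star$ projects componentwise to $x_\cF = w_h \al_h(x_{h^{-1}\cF}) w_h^\star$. Using $px = xp = x$, the support relations $w_h w_h^\star = p$ and $w_h^\star w_h = \al_h(p)$, and the centralizer property $w_h \in (M \ovt Q)_\vphi$, a direct computation shows that $\|x_\cF\|_\vphi = \|x_{h^{-1}\cF}\|_\vphi$ for every $h \in \Sigma$. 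For~(2) I would run the analogous argument with $L^2(M) = \bigoplus_\cF \cK_\cF^M$, where $\cK_\cF^M$ has $L^2_0(P)$ at the positions $\cF$ and $\C$ elsewhere; each $1 \ot W_{i,h}$ acts only on the $Q \ovt B(H)$ factor and thus respects the decomposition, and the same bookkeeping, now relative to the semifinite weight $\vphi \ot \vphi \ot \Tr$ which is finite on the support projections $1 \ot q_i$, yields $\|x_\cF\|_2 = \|x_{h^{-1}\cF}\|_2$.

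It then remains to invoke that every nonempty finite subset $\cF \subset I$ has infinite $\Sigma$-orbit: otherwise $\Stab \cF$ would have finite index in $\Sigma$, its kernel in the permutation action on $\cF$ would still be of finite index, and some $i \in \cF$ would have finite $\Sigma$-orbit, contradicting the hypothesis that $\Sigma \actson I$ has infinite orbits. Since $\sum_\cF \|x_\cF\|_2^2 < \infty$ and the norms are $\Sigma$-orbit invariant, this forces $x_\cF = 0$ for all $\cF \neq \emptyset$. For~(1) this gives $x \in L^2(M \ovt Q) \cap (\tilde M \ovt Q) = M \ovt Q$; for~(2) it gives $x = x_\emptyset$, which a fortiori sits in $1 \ovt Q \ovt B(H)$. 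The main technical step will be the careful tracking of the support projections $p$, $\al_h(p)$, $q_1$, $q_2$ in the norm identity of the middle paragraph; everything else is the standard orbit-counting trick from spectral-gap rigidity.
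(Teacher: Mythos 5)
Your proposal is correct and follows essentially the same route as the paper: decompose $L^2((\Mtil\ominus M)\ovt Q)$ (resp.\ $L^2((M\ominus\C 1)\ovt Q\ovt B(H))$) into orthogonal $(M\ovt Q)$-bimodules indexed by the nonempty finite subsets $\cF\subset I$, observe that the cocycle equation makes the component norms constant along $\Sigma$-orbits, and conclude from the square-summability that all components with $\cF\neq\emptyset$ vanish. The only difference is cosmetic: you spell out why every nonempty finite subset has infinite $\Sigma$-orbit, a point the paper leaves implicit.
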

\begin{proof}
1.\ Denote by $J$ the set of all nonempty subsets of $I$. For every $\cF \in J$, we denote by $K_\cF$ the $\|\fdot\|_\vphi$-closed linear span of $M (\Ptil \ominus P)^\cF M \ot Q$ inside $L^2(\Mtil \ovt Q)$. Note that $L^2((\Mtil \ominus M) \ovt Q)$ is the orthogonal direct sum of the closed subspaces $K_\cF$, $\cF \in J$. Denote by $p_\cF$ the orthogonal projection of $L^2(\Mtil \ovt Q)$ onto $K_\cF$.

Take $x \in p(\Mtil \ovt Q)p$ satisfying $x = w_h \al_h(x) w_h^\star$ for all $h \in \Sigma$. Define $\xi \in \ell^2(J)$ given by $\xi(\cF) = \|p_\cF(x)\|_\vphi$. Since $K_\cF$ is an $(M \ovt Q)$-bimodule, we have for all $h \in \Sigma$ that
$$(h \cdot \xi)(\cF) = \|p_{h^{-1} \cdot \cF}(x)\|_\vphi = \|p_{\cF}(\al_h(x))\|_\vphi = \|p_{\cF}(w_h \, \al_h(x) \, w_h^\star)\|_\vphi = \|p_\cF(x)\|_\vphi = \xi(\cF) \; .$$
Since $\Sigma \actson I$ has infinite orbits, there are no nonzero $\Sigma$-invariant vectors in $\ell^2(J)$. So $p_\cF(x) = 0$ for all $\cF \in J$. This means that $x \in M \ovt Q$.

2.\ We decompose $L^2((M \ominus \C 1) \ovt Q \ovt B(H))$ as the orthogonal direct sum of the subspaces $\cL_\cF$, $\cF \in J$, defined as the $\|\fdot\|_\vphi$-closed linear span of $(P \ominus \C 1)^\cF \ot Q \ot \HS(H)$, where $\HS(H)$ denotes the set of Hilbert-Schmidt operators on $H$. We then reason in the same way as in 1.
\end{proof}

\begin{proof}[Proof of \cref{thm.cocycle-cs}]
{\bf Step 1:} we prove the following spectral gap property. There exists a $\kappa > 0$ and $g_1,\ldots,g_n \in \Gamma$ such that for all $\xi \in L^2((\Mtil \ominus M) \ovt Q)$, $h \in \Sigma$ and $\mu_i \in \T$, we have
\begin{equation}\label{eq.claim}
\|\xi\|_\vphi^2 \leq \kappa \sum_{i=1}^n \|\mu_i\, \xi - w_{g_i} \, \al_{g_i}(\xi) \, \al_h(w_{g_i}^\star) \|_\vphi^2 \; .
\end{equation}
Denote by $J$ the set of all nonempty subsets of $I$. By \cref{lem.mean}, we can take a $\kappa > 0$ and $g_1,\ldots,g_n \in \Gamma$ such that
\begin{equation}\label{eq.step1}
\|\eta\|_2^2 \leq \kappa \sum_{i=1}^n \|\eta - g_i \cdot \eta\|_2^2 \quad\text{for all}\;\; \eta \in \ell^2(J) \; .
\end{equation}
We choose $\kappa$ such that also $\kappa \geq 1/n$. To prove \eqref{eq.claim}, fix $h \in \Sigma$ and $\xi \in L^2((\Mtil \ominus M) \ovt Q)$. Put $\xi_0 = p \xi \al_h(p)$. The left hand side of \eqref{eq.claim} equals $\|\xi - \xi_0\|_\vphi^2 + \|\xi_0\|_\vphi^2$, while the right hand side equals
$$\kappa n \|\xi-\xi_0\|_\vphi^2 + \kappa \sum_{i=1}^n \|\mu_i\, \xi_0 - w_{g_i} \, \al_{g_i}(\xi_0) \, \al_h(w_{g_i}^\star) \|_\vphi^2 \; .$$
Since $\kappa n \geq 1$, we may from the beginning assume that $\xi = \xi_0$, i.e.\ that $\xi = p \xi \al_h(p)$.

As in the proof of \cref{lem.weak-mixing}.1, we decompose $L^2((\Mtil \ominus M) \ovt Q)$ as the orthogonal direct sum of the closed subspaces $K_\cF$, with orthogonal projection $p_\cF$ onto $K_\cF$. Define $\eta \in \ell^2(J)$ given by $\eta(\cF) = \|p_\cF(\xi)\|_\vphi$. Note that $\|\xi\|_\vphi = \|\eta\|_2$. Since $K_\cF$ is an $(M \ovt Q)$-bimodule, and $\xi = p\xi \alpha_h(p)$, we have for all $g \in \Gamma$ that
$$(g \cdot \eta)(\cF) = \|p_{g^{-1} \cdot \cF}(\xi)\|_\vphi = \|p_{\cF}(\al_g(\xi))\|_\vphi = \|p_{\cF}(w_g \, \al_g(\xi) \, \al_h(w_g^\star))\|_\vphi \; .$$
Therefore, for all $\mu \in \T$, we have
\begin{align*}
\|\eta - g \cdot \eta\|^2_2 &= \sum_{\cF \in J} \bigl| \, \|p_{\cF}(\mu \xi)\|_\vphi - \|p_{\cF}(w_g \, \al_g(\xi) \, \al_h(w_g^\star))\|_\vphi \, \bigr|^2 \\
& \leq \sum_{\cF \in J} \|p_\cF( \mu \xi - w_g \, \al_g(\xi) \, \al_h(w_g^\star))\|_\vphi^2 = \| \mu \xi - w_g \, \al_g(\xi) \, \al_h(w_g^\star)\|_\vphi^2 \; .
\end{align*}
Taking $\mu = \mu_i$, $g=g_i$ and summing over $i$, \eqref{eq.claim} follows from \eqref{eq.step1}.

{\bf Step 2:} we prove that there exists a $t_0 > 0$ such that
\begin{equation}\label{eq.aim2}
\varphi(\theta_t(w_h) \, w_h^\star) \geq \frac{1}{2} \varphi(p) \quad\text{for all}\;\; h \in \Sigma, 0 \leq t \leq t_0 \; .
\end{equation}
Since $\Sigma$ and $\Gamma$ commute inside $\Lambda$, the formula
$$\kappa(g,h) = \Om(g,h) \, \overline{\Om}(h,g)$$
defines a bicharacter $\kappa : \Gamma \times \Sigma \recht \T$. The $1$-cocycle relation implies that
$$\kappa(g,h) \, w_h = w_g \, \al_g(w_h) \, \al_h(w_g^\star) \quad\text{for all}\;\; h \in \Sigma, g \in \Gamma \; .$$
Applying $\theta_{t}$, it follows that
$$\| \kappa(g,h)\, \theta_t(w_h) - w_g \, \al_g(\theta_t(w_h)) \, \al_h(w_g^\star) \|_\vphi \leq 2 \|w_g - \theta_t(w_g)\|_\vphi \; .$$
We write $\xi_h = \theta_t(w_h) - E_{M \ovt Q}(\theta_t(w_h))$ and conclude that
$$\| \kappa(g,h) \, \xi_h - w_g \, \al_g(\xi_h) \, \al_h(w_g)^\star \|_\vphi \leq 2 \|w_g - \theta_t(w_g)\|_\vphi \; .$$
In combination with \eqref{eq.claim}, it follows that
$$\| \xi_h \|_\vphi^2 \leq 4 \kappa \sum_{i=1}^n \|w_{g_i} - \theta_t(w_{g_i})\|_\vphi^2 \; .$$
Fix $t_0 > 0$ such that the right hand side of the previous expression is smaller than $\vphi(p)/2$ for all $0 \leq t \leq t_0$. It follows that $\|\xi_h\|_\vphi^2 \leq \vphi(p)/2$ and hence
\begin{equation}\label{eq.almost-2}
\|E_{M \ovt Q}(\theta_t(w_h))\|_\vphi^2 \geq \frac{1}{2} \vphi(p) \quad\text{for all}\;\; h \in \Sigma, 0 \leq t \leq t_0 \; .
\end{equation}
Let $x \in M \ovt Q$, and write $x = \sum_{\cF} x_\cF$, where $x_\cF$ is the orthogonal projection of $x$ onto the $\|\fdot\|_\vphi$-closed linear span of $(P \ominus \mathbb{C})^\cF \ot Q$ inside $L^2(M \ovt Q)$, and $\cF$ runs over all finite subsets of $I$. Denoting $\rho(t) = |\vphi(u_t)|^2$, we have $E_{M\ovt Q}(\theta_t(x)) = \sum_\cF \rho(t)^{|\cF|} x_\cF$, and hence
\begin{align*}
 \vphi(x^\star E_{M \ovt Q}(\theta_t(x)))
  = \sum_{\cF} \rho(t)^{|\cF|} \| x_\cF \|_2^2
  \geq \sum_{\cF} \rho(t)^{2 |\cF|} \| x_\cF \|_2^2
  = \| E_{M \ovt Q}(\theta_t(x)) \|_2^2.
\end{align*}
We obtain for all $x \in (M \ovt Q)_\vphi$ the following transversality inequality in the sense of \cite[Lemma 2.1]{popa;superrigidity-malleable-actions-spectral-gap}:
$$\vphi(\theta_t(x) x^\star) = \vphi(x^\star \theta_t(x)) \geq \|E_{M \ovt Q}(\theta_t(x))\|_\vphi^2 \; .$$
In combination with \eqref{eq.almost-2}, we have proved that \eqref{eq.aim2} holds.

{\bf Step 3:} there exists a partial isometry $V \in (\Mtil \ovt Q)_\varphi$ and a nonzero projection $p_0 \in (M \ovt Q)_\varphi$ satisfying $p_0 \leq p$, $V^\star V = p_0$, $V V^\star = \theta_1(p_0)$ and
\begin{equation}\label{eq.aim3}
V \, w_h = \theta_1(w_h) \, \al_h(V) \quad\text{for all}\;\; h \in \Sigma \; .
\end{equation}
Take a positive integer $r$ such that $2^{-r} \leq t_0$. Put $t = 2^{-r}$. Define $K \subset (\Mtil \ovt Q)_\vphi$ as the $\|\fdot\|_2$-closed convex hull of $\{\theta_t(w_h) w_h^\star \mid h \in \Sigma\}$. Define $V_1$ as the unique element of minimal $\|\fdot\|_2$ in $K$. By \eqref{eq.aim2}, we have that $\vphi(V_1) \geq \vphi(p)/2$, so that $V_1$ is nonzero. For all $h,h' \in \Sigma$, we have
$$\theta_t(w_h) \, \al_h(x) \, w_h^\star = y \quad\text{where}\;\; x = \theta_t(w_{h'}) w_{h'}^\star \;\; , \;\; y = \theta_t(w_{hh'}) w_{hh'}^\star \; .$$
Therefore, $K$ is invariant under $\pi_h : K \recht K : \pi_h(x) = \theta_t(w_h) \, \al_h(x) \, w_h^\star$. Since $x = \theta_t(p) x p$ for all $x \in K$, the maps $\pi_h$ are isometric. Therefore, $\pi_h(V_1) = V_1$ for all $h \in \Sigma$. This means that $\theta_t(w_h) \, \al_h(V_1) = V_1 \, w_h$ for all $h \in \Sigma$. Similarly, $K$ is invariant under $x \mapsto \theta_t(\gamma(x))^\star$, so that $V_1^\star = \theta_t(\gamma(V_1))$.

Taking the polar decomposition of $V_1$, we may assume that $V_1$ is a nonzero partial isometry with $V_1^\star V_1 \leq p$ and $V_1 V_1^\star \leq \theta_t(p)$. Since $\al_h(V_1^\star V_1) = w_h^\star \, V_1^\star V_1 \, w_h$ for all $h \in \Sigma$, \cref{lem.weak-mixing}.1 implies that $p_0 := V_1^\star V_1 \in (M \ovt Q)_\varphi$. Then $V_1 V_1^\star = \theta_t(p_0)$ and it follows that $V_2 := \theta_t(V_1 \, \gamma(V_1^\star))$ is a partial isometry in $(\Mtil \ovt Q)_\varphi$ with left support $\theta_{2t}(p_0)$ and right support $p_0$, satisfying $\theta_{2t}(w_h) \, \al_h(V_2) = V_2 \, w_h$ for all $h \in \Sigma$. Continuing the same procedure up to $r$ steps, we arrive at \eqref{eq.aim3}.

{\bf Step 4:} adding the subscript $0$ to indicate the linear span of all eigenvectors of the modular automorphism group of $\varphi$, we prove that there is no sequence $h_n \in \Sigma$ satisfying
\begin{equation}\label{eq.absurd}
\begin{aligned}
h_n \cdot i &\recht \infty \;\;&&\text{for all}\;\; i \in I,\\
\text{and}\quad\|(\vphi \ot \id)((x^\star \ot 1) &w_{h_n} (\al_{h_n}(y) \ot 1))\|_\vphi \recht 0 \;\;&&\text{for all}\;\; x,y \in M_0 \; .
\end{aligned}
\end{equation}
Assume by contradiction that $(h_n)$ is such a sequence. We claim that
\begin{equation}\label{eq.absurd-2}
\|E_{M \ovt Q}((x^\star \ot 1) \theta_1(w_{h_n}) (\al_{h_n}(y) \ot 1))\|_\vphi \recht 0 \quad\text{for all}\;\; x,y \in \Mtil_0 \; .
\end{equation}
Denote by $W \subset \Ptil$ the set of ``reduced words'' in $\Ptil = P \star L\Z$, i.e.\ products of factors alternatingly belonging to $P_0 \ominus \C 1$ and $\{u_n \mid n \in \Z \setminus \{0\}\}$. We also consider $1 \in W$ as the ``empty word''. We have $W = W_1 \sqcup W_2$ where the elements $w \in W_1$ belong to $\theta_1(P) P$ and the elements $w \in W_2$ are orthogonal to $\theta_1(P) P$. We finally denote, for $\cF \subset I$, by $W^\cF \subset \Mtil$ the set of elementary tensors with tensor factors in positions $i \in \cF$ belonging to $W$. By density, it suffices to prove \eqref{eq.absurd-2} for all finite subsets $\cF \subset I$ and all $x,y \in W^\cF$. Since $h_n \cdot i \recht \infty$ for all $i \in I$, we have that $\cF \cap h_n \cdot \cF = \emptyset$ for all $n$ large enough. In what follows, we may thus assume that $\cF \cap h_n \cdot \cF = \emptyset$. If at least one of the factors of $x \in W^\cF$ belongs to $W_2$, using $\cF \cap h_n \cdot \cF = \emptyset$, one checks that
$$E_{M \ovt Q}((x^\star \ot 1) \theta_1(w) (\al_{h_n}(y) \ot 1)) = 0 \quad\text{for all}\;\; w \in M \ovt Q \; .$$
The same holds if one of the factors of $y \in W^\cF$ belongs to $W_2$. So to conclude the proof of \eqref{eq.absurd-2}, we may assume that $x = \theta_1(a) b$ and $y = \theta_1(c) d$ with $a,b,c,d \in M_0$. But then,
$$E_{M \ovt Q}((x^\star \ot 1) \theta_1(w_{h_n}) (\al_{h_n}(y) \ot 1)) = (b^\star \ot 1) \, E_{M \ovt Q}(\theta_1((a^\star \ot 1) \, w_{h_n} \, (\al_{h_n}(c) \ot 1))) \, (\al_{h_n}(d) \ot 1) \; .$$
Since
$$E_{M \ovt Q}(\theta_1((a^\star \ot 1) \, w_{h_n} \, (\al_{h_n}(c) \ot 1))) = 1 \ot (\vphi \ot \id)((a^\star \ot 1) \, w_{h_n} \, (\al_{h_n}(c) \ot 1)) \; ,$$
we conclude that \eqref{eq.absurd-2} follows from the assumption in \eqref{eq.absurd}.

By density, \eqref{eq.absurd-2} implies that
\begin{equation}\label{eq.absurd-3}
\|E_{M \ovt Q}(x^\star \theta_1(w_{h_n}) \al_{h_n}(y))\|_\vphi \recht 0 \quad\text{for all}\;\; x,y \in (\Mtil \ovt Q)_0 \; .
\end{equation}
Taking $x = y = V$ and using \eqref{eq.aim3}, it follows that $\|p_0\|_2 = \|p_0 w_{h_n}\|_2 \recht 0$. This is absurd and therefore, there is no sequence $h_n \in \Sigma$ satisfying \eqref{eq.absurd}.

{\bf Step 5.} We denote by $\Delta_{\vphi}$ the modular operator of $\vphi$ on $L^2(M)$ and by $\vphih$ the normal semifinite faithful weight $\Tr(\Delta_{\vphi} \, \cdot) \ot \vphi$ on $B(L^2(M)) \ovt Q$. Denote $\cN := (B(L^2(M)) \ovt Q)_{\vphih}$ and note that $\vphih$ is a semifinite trace on $\cN$. We denote by $a_h \in \cU(L^2(M))$ the automorphism $\al_h$ viewed as a unitary operator on $L^2(M)$. Using the formula $\Ad a_g \ot \al_g$, the automorphism $\al_g$ of $M \ovt Q$ is naturally extended to $B(L^2(M)) \ovt Q$. We still denote this extension by $\al_g$. We claim that there exists a nonzero projection $T \in p \cN p$ satisfying
\begin{equation}\label{eq.aim5}
\vphih(T) < \infty \quad\text{and}\quad T = w_h \, \al_h(T) \, w_h^\star \quad\text{for all}\;\; h \in \Sigma \; .
\end{equation}
Consider $\Sigma \actson \ell^2(I)$ given by $\Sigma \actson I$. Since there is no sequence $h_n \in \Sigma$ satisfying \eqref{eq.absurd}, there exists $\delta > 0$, a finite subset $\cF \subset I$ and finitely many elements $x_1,\ldots,x_m \in M$ that are eigenvectors for the modular group of $\vphi$ with eigenvalues $\lambda_1,\ldots,\lambda_m$, such that
\begin{equation}\label{eq.what-we-have}
\langle h \cdot 1_\cF,1_\cF \rangle + \sum_{i,j=1}^m \lambda_i \, \|(\vphi \ot \id)((x_j^\star \ot 1) w_h (\al_{h}(x_i) \ot 1))\|_\vphi^2 \geq \delta \quad\text{for all}\;\; h \in \Sigma \; .
\end{equation}
Whenever $x \in M$ is an eigenvector for the modular group of $\vphi$, we consider the rank one operator $T_x$ on $B(L^2(M))$ given by $T_x(y) = x \vphi(x^\star y)$. Note that $T_x \ot 1 \in \cN$. Define, in the Hilbert space $\cK = \ell^2(I) \oplus L^2(p \cN p , \vphih)$, the vector
$$T_0 = 1_\cF \oplus p\Bigl(\sum_{i=1}^m T_{x_i} \ot 1 \Bigr)p \; .$$
The formula $h \cdot (\xi,S) = (h \cdot \xi, w_h \al_h(S) w_h^\star)$ defines a unitary representation of $\Sigma$ on $\cK$. Formula \eqref{eq.what-we-have} says that
$$\langle h \cdot T_0, T_0 \rangle \geq \delta \quad\text{for all}\;\; h \in \Sigma \; .$$
Therefore, the unitary representation $\Sigma \actson \cK$ has a nonzero invariant vector. Since $\Sigma \actson \ell^2(I)$ has no nonzero invariant vectors, the representation $\Sigma \actson L^2(p \cN p,\vphih)$ has a nonzero invariant vector. Using functional calculus, it follows that \eqref{eq.aim5} holds.

{\bf Step 6.} There exists a nonzero projection $p_0 \in (M \ovt Q)_\vphi$ with $p_0 \leq p$, a Hilbert space $H$, a projection $q \in Q_\vphi \ovt B(H)$ with $(\vphi \ot \Tr)(q) <\infty$, a positive number $\lambda > 0$, an element $V \in (M \ovt Q)_{\vphi,\lambda} \ot \Hbar$ with $V V^\star = p_0$, $V^\star V = 1 \ot q$ and a generalized $1$-cocycle $(W_h)_{h \in \Sigma}$ for the action $(\al_h)_{h \in \Sigma}$ on $Q_\vphi \ovt B(H)$ with support projection $q$, such that
\begin{equation}\label{eq.aim6}
w_h \, \al_h(V) = V (1 \ot W_h) \quad\text{for all}\;\; h \in \Sigma \; .
\end{equation}

Denote by $P_\lambda$ the orthogonal projection of $L^2(M \ovt Q)$ onto $L^2((M \ovt Q)_{\vphi,\lambda})$. Take a nonzero projection $T \in p \cN p$ satisfying \eqref{eq.aim5}. Note that $\cN$ commutes with the projections $P_\lambda$. Choose $\lambda > 0$ such that $T P_\lambda$ is nonzero. By construction, $T P_\lambda$ is the orthogonal projection onto a closed subspace $\cE \subset L^2(p (M \ovt Q)_{\vphi,\lambda})$ that is a right $(1 \ot Q_\vphi)$-module and that is globally invariant under $\xi \mapsto w_h \, \al_h(\xi)$ for all $h \in \Sigma$.

We claim that $\cE$ has finite $(1 \ot Q_\vphi)$-dimension. Denote by $\cS$ the commutant of the right $(1 \ot Q_\vphi)$-action on $L^2((M \ovt Q)_{\vphi,\lambda})$. Note that $\cN P_\lambda \subset \cS$. The normal faithful tracial state $\vphi$ on $Q_\vphi$ induces the normal semifinite faithful trace $\tauh$ on $\cS$. To prove the claim, we must show that $\tauh(T P_\lambda) < \infty$. Denote by $E_\lambda \in \cZ(\cN)$ the smallest projection in $\cN$ that dominates $P_\lambda$. Applying \cref{lem.trace-formula} to $1 \ot Q \subset M \ovt Q$, we get that
\begin{equation}\label{eq.trace-formula}
\tauh(S P_\lambda) = \frac{1}{\lambda} \vphih(S E_\lambda) \quad\text{for all}\;\; S \in \cN^+ \; .
\end{equation}
In particular, the claim that $\tauh(T P_\lambda) < \infty$ follows from the property that $\vphih(T) < \infty$.

Choose a Hilbert space $H$, a nonzero projection $q \in Q_\vphi \ovt B(H)$ with $(\vphi \ot \Tr)(q) < \infty$ and a $Q_\vphi$-linear unitary operator $\Psi : q(L^2(Q_\vphi) \ot H) \recht \cE$. Fix $h \in \Sigma$. The unitary operator
$$\al_h(q) (L^2(Q_\vphi) \ot H) \recht q (L^2(Q_\vphi) \ot H) : \xi \mapsto \Psi^{-1}\bigl(w_h (\al_h \circ \Psi \circ \al_h^{-1})(\xi)\bigr)$$
is right $Q_\vphi$-linear. So it corresponds to left multiplication with $W_h \in Q_\vphi \ovt B(H)$ satisfying $W_h W_h^\star = q$ and $W_h^\star W_h = \al_h(q)$. By construction, $(W_h)$ is a generalized $1$-cocycle for the action $(\al_h)_{h \in \Sigma}$ with support projection $q$.

The operator $\Theta \in B(H, p L^2((M \ovt Q)_{\vphi,\lambda}))$ given by $\Theta(\xi) = \Psi(q(1 \ot \xi))$ satisfies
$$\Tr(\Theta^\star \Theta) = (\vphi \ot \Tr)(q) < \infty \; .$$
Therefore, $\Theta$ can be viewed as a nonzero vector $V \in p (L^2((M \ovt Q)_{\vphi,\lambda}) \ot \Hbar)(1 \ot q)$ satisfying
\begin{equation}\label{eq.cohom}
w_h \, \al_h(V) = V (1 \ot W_h) \quad\text{for all}\;\; h \in \Sigma \; .
\end{equation}
The left support of $V$ is a projection $p_0 \in (M \ovt Q)_\vphi$ satisfying $p_0 \leq p$. The right support of $V$ is a projection $q_1 \in (M \ovt Q)_\vphi \ovt B(H)$ satisfying $q_1 \leq 1 \ot q$. By \eqref{eq.cohom}, we also have that $\al_h(q_1) = (1 \ot W_h^\star) q_1 (1 \ot W_h)$ for all $h \in \Sigma$. \Cref{lem.weak-mixing}.2 implies that $q_1 = 1 \ot q_2$ for some projection $q_2 \in Q_\vphi \ovt B(H)$ satisfying $q_2 \leq q$. But then $\Psi(\xi) = 0$ for all $\xi$ in the image of $q-q_2$. Since $\Psi$ is unitary, we conclude that $q_2 = q$. Taking the polar part of $V$, we have found the partial isometry $V$ satisfying \eqref{eq.aim6}.

{\bf Step 7.} We upgrade step~6 from $\Sigma$ to the entire group $\Lambda$. More precisely, we prove that there exists a nonzero projection $p_0 \in (M \ovt Q)_\vphi$ with $p_0 \leq p$, a Hilbert space $H$, a projection $q \in Q_\vphi \ovt B(H)$ with $(\vphi \ot \Tr)(q) <\infty$, a positive number $\lambda > 0$, an element $V \in (M \ovt Q)_{\vphi,\lambda} \ot \Hbar$ with $V V^\star = p_0$, $V^\star V = 1 \ot q$ and a generalized $1$-cocycle $(W_g)_{g \in \Lambda}$ for the action $(\al_g)_{g \in \Lambda}$ on $Q_\vphi \ovt B(H)$ with support projection $q$, such that
\begin{equation}\label{eq.aim7}
w_g \, \al_g(V) = V (1 \ot W_g) \quad\text{for all}\;\; g \in \Lambda \; .
\end{equation}
Denote by $\beta_g$ the action of $\Lambda$ on $p(M \ovt Q)_\vphi p$ given by $\beta_g = (\Ad w_g) \circ \al_g$. Fix the positive number $\lambda > 0$ that appears in the statement of step~6. Denote by $\cP$ the set of projections $p_0 \in (M \ovt Q)_\vphi$ with $p_0 \leq p$ such that there exist $H$, $q$ and $W_h$ satisfying the conclusion of step~6. By \eqref{eq.aim6}, all $p_0 \in \cP$ satisfy $\beta_h(p_0) = p_0$ for all $h \in \Sigma$. We make the following four observations about $\cP$.

Take $g \in \Gamma$. Since $\Gamma$ and $\Sigma$ commute, we can apply $x \mapsto w_g \al_g(x)$ to \eqref{eq.aim6} and conclude that $\beta_g(p_0) \in \cP$ for all $p_0 \in \cP$.

We next prove that if $p_0 \in \cP$ and $p_1 \leq p_0$ is a projection satisfying $\beta_h(p_1) = p_1$ for all $h \in \Sigma$, then also $p_1 \in \cP$. This follows by multiplying \eqref{eq.aim6} on the left by $p_1$ and using \cref{lem.weak-mixing}.2 to conclude that the right support of $p_1 V$ belongs to $1 \ovt Q_\vphi \ovt B(H)$ and can be used to cut down $W_h$.

We next prove that if $p_1,p_2 \in \cP$, then also $p_0 = p_1 \vee p_2$ belongs to $\cP$. Taking the direct sum of the $V$, $H$, $q$ and $W_h$ that come with $p_1$ and $p_2$, we find an element $V \in (M \ovt Q)_{\vphi,\lambda} \ovt \Hbar$ and a generalized $1$-cocycle $W_h$ with support $q$ such that \eqref{eq.aim6} holds and such that the left support of $V$ equals $p_0$. As in the previous paragraph, the right support of $V$ belongs to $1 \ovt Q_\vphi \ovt B(H)$. Cutting down with this projection and replacing $V$ by its polar part, we find that $p_0 \in \cP$.

We finally make the obvious observation that $\cP$ is closed under taking a direct sum of projections that are orthogonal.

By step~6, the set $\cP$ is nonempty. Combining the four observations above, we find a nonzero projection $p_0 \in \cP$ that satisfies $\beta_g(p_0)=p_0$ for all $g \in \Gamma$. Take the corresponding $V$, $H$, $q$ and $W_h$. To prove that \eqref{eq.aim7} holds, it suffices to prove that $V^\star w_g \al_g(V)$ belongs to $1 \ovt Q_\vphi \ovt B(H)$ for all $g \in \Lambda$. Since we already know this when $g \in \Sigma$, it remains to consider $g \in \Gamma$. Put $X = V^\star w_g \al_g(V)$. Since $\Gamma$ and $\Sigma$ commute, and using \eqref{eq.aim6}, we find that
$$X = \kappa(g,h) \, (1 \ot W_h) \, \al_h(X) \, (1 \ot \al_g(W_h)^\star) \quad\text{for all}\;\; h \in \Sigma \; .$$
By \cref{lem.weak-mixing}.2, it follows that $X \in 1 \ovt Q_\vphi \ovt B(H)$. This concludes the proof of step~7.

{\bf End of the proof.} Having found $p_0$ as in step~7, we have $p_0 w_g = w_g \al_g(p_0)$ for all $g \in \Lambda$ and then $((p-p_0) w_g)_{g \in \Lambda}$ is a new generalized $1$-cocycle with support projection $p-p_0$. We can apply the reasoning above to $(p-p_0)w_g$. Therefore, the theorem follows by a maximality argument.
\end{proof}

\begin{corollary}\label{cor.special-case}
Let $\alpha : \Lambda \curvearrowright (M,\vphi) = (P,\phi)^I$ be a generalized Bernoulli action satisfying the same assumptions as in \cref{thm.cocycle-cs}. Let $w_g \in M_\vphi$ be a generalized $1$-cocycle with support projection $p \in M_\vphi$ and scalar $2$-cocycle $\Om$.

If the action $\Ad w_g \circ \al_g$ of $\Lambda$ on $p M_\vphi p$ has a trivial fixed point algebra, then there exists an integer $n \in \mathbb{N}$, an irreducible $\Om$-representation $\pi : \Lambda \recht \cU(n)$ and, writing $\lambda = \vphi(p) / n$, an element $V \in M_{\vphi,\lambda} \ot \overline{\C^n}$ satisfying $VV^\star = p$, $V^\star V = 1 \ot 1$ and
$$w_g = V (1 \ot \pi(g)) \al_g(V^\star) \quad\text{for all}\;\; g \in \Lambda \; .$$

If the action $\Ad w_g \circ \al_g$ of $\Lambda$ on $p M_\vphi p$ even has no nontrivial finite dimensional globally invariant subspaces, then $n=1$ and $\pi(g) \in \T$ satisfies $\pi(g) \pi(g') = \Om(g,g') \pi(gg')$ for all $g,g' \in \Lambda$.
\end{corollary}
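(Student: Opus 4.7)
The plan is to reduce the corollary to Theorem~\ref{thm.cocycle-cs} applied with the trivial base $Q = \C 1$, and then extract the irreducibility and scalar conclusions from the fixed-point hypotheses.

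Applying Theorem~\ref{thm.cocycle-cs} with $Q = \C 1$ (equipped with its unique state and the trivial $\Lambda$-action) to the generalized $1$-cocycle $(w_g)_{g \in \Lambda}$ produces a decomposition $p = \sum_k p_k$ in $M_\vphi$ with $p_k w_g = w_g \al_g(p_k)$, together with Hilbert spaces $H_k$, projections $q_k \in B(H_k)$ with $\Tr(q_k) < \infty$, positive numbers $\lambda_k$, elements $v_k \in M_{\vphi,\lambda_k} \ovt \overline{H_k}$, and generalized $1$-cocycles $W_{k,g}$ for the trivial action on $B(H_k)$, such that $v_k v_k^\star = p_k$, $v_k^\star v_k = 1 \ot q_k$, and $v_k^\star w_g \al_g(v_k) = 1 \ot W_{k,g}$. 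Because $\al$ acts trivially on $B(H_k)$, the defining relation for a generalized $1$-cocycle becomes $W_{k,gh} = \Om(g,h) W_{k,g} W_{k,h}$, i.e., each $W_k$ is a projective representation with scalar cocycle. Replacing $H_k$ by its finite-dimensional subspace $q_k H_k$, we may assume $q_k = 1$ and $\dim H_k < \infty$.

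The key step is to exploit the triviality of the fixed-point algebra. The relation $p_k w_g = w_g \al_g(p_k)$ is exactly the fixed-point condition $w_g \al_g(p_k) w_g^\star = p_k$, so each $p_k \in \{0,p\}$, and $\sum_k p_k = p$ forces exactly one index $k_0$ to contribute. Writing $V := v_{k_0}$, $\pi := W_{k_0}$, $H := H_{k_0}$, $n := \dim H$, $\lambda := \lambda_{k_0}$, and rearranging $v_{k_0}^\star w_g \al_g(v_{k_0}) = 1 \ot \pi(g)$ using $V^\star V = 1 \ot 1$, we obtain $w_g = V(1 \ot \pi(g)) \al_g(V^\star)$. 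Expanding $V$ in an orthonormal basis of $H$ and using $\vphi(a a^\star) = \lambda \vphi(a^\star a)$ for $a \in M_{\vphi,\lambda}$ gives $\vphi(p) = \lambda n$, hence $\lambda = \vphi(p)/n$. To show $\pi$ is irreducible, suppose $H = H^{(1)} \oplus H^{(2)}$ is a $\pi$-invariant decomposition and split $V = V^{(1)} + V^{(2)}$ accordingly. A direct computation, using $V^{(i)\star} V^{(j)} = 0$ for $i \neq j$ and block-diagonality of $\pi$, shows $w_g = \sum_i V^{(i)}(1 \ot \pi^{(i)}(g)) \al_g(V^{(i)\star})$ and that each $p^{(i)} := V^{(i)} V^{(i)\star} \in M_\vphi$ satisfies $p^{(i)} w_g = w_g \al_g(p^{(i)})$. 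Triviality of the fixed-point algebra again forces $p^{(i)} \in \{0,p\}$, and $p^{(1)} + p^{(2)} = p$ implies that one of the $H^{(i)}$ is zero.

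For the sharper statement, consider the linear span
$$\cV := \operatorname{span}\{V(1 \ot \eta \xi^\star) V^\star : \eta, \xi \in H\} \subset p M_\vphi p,$$
which has dimension $n^2$ and contains $p = V(1 \ot 1) V^\star$. From $w_g \al_g(V) = V(1 \ot \pi(g))$, which follows from $w_g = V(1 \ot \pi(g)) \al_g(V^\star)$ together with $V^\star V = 1 \ot 1$, one computes
$$w_g \al_g\bigl(V(1 \ot \eta \xi^\star) V^\star\bigr) w_g^\star = V(1 \ot \pi(g) \eta \xi^\star \pi(g)^\star) V^\star \in \cV,$$
so $\cV$ is globally invariant under the twisted action. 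The assumption that $(p M_\vphi p, \Ad w_g \circ \al_g)$ has no finite-dimensional invariant subspace other than $\C p$ then forces $n^2 = 1$, i.e.\ $n = 1$. The scalar cocycle relation for $\pi : \Lambda \recht \T$ then drops out of the generalized $1$-cocycle identity $w_{gh} = \Om(g,h) w_g \al_g(w_h)$ applied to $w_g = \pi(g) V \al_g(V^\star)$, after cancelling $V$ on the left and $\al_{gh}(V^\star)$ on the right. The only delicate bookkeeping is the identification of $M_{\vphi,\lambda} \ovt \overline{H}$ with collections of partial isometries in $M$ via an orthonormal basis; no serious obstacle arises, since the entire analytic content is absorbed in Theorem~\ref{thm.cocycle-cs}.
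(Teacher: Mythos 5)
Your proposal is correct and follows essentially the same route as the paper: apply \cref{thm.cocycle-cs} with $Q=\C 1$, use triviality of the fixed-point algebra of $\Ad w_g\circ\al_g$ to force a single block $p=p_{k_0}$, compute $\vphi(p)=\lambda n$ from $V\in M_{\vphi,\lambda}\ot\overline{\C^n}$, and observe that $V(1\ot M_n(\C))V^\star$ is a finite-dimensional globally invariant subspace of $pM_\vphi p$ to conclude $n=1$ in the second case. Your explicit verification of irreducibility of $\pi$ (via an invariant decomposition $H=H^{(1)}\oplus H^{(2)}$ yielding fixed projections $p^{(i)}$) is a correct spelling-out of a point the paper's proof leaves implicit.
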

\begin{proof}
By \cref{thm.cocycle-cs}, we find projections $p_k \in M_\vphi$ such that $p = \sum_k p_k$ and $p_k =  w_g \alpha_g(p_k) w_g^\star$, and we find Hilbert spaces $H_k$, positive numbers $\lambda_k > 0$, numbers $n_k \in \mathbb{N}_0$ and elements $v_k \in M_{\vphi,\lambda_k} \ovt \overline{\mathbb{C}^{n_k}}$ such that
\begin{align*}
 v_k v_k^\star = p_k \,\,\text{and}\,\,v_k^\star v_k = 1 \otimes 1, \quad v_k^\star w_g \alpha_g(v_k) = 1 \otimes W_{k,g}
\end{align*}
for $(W_{k,g})_{g \in \Lambda} \in M_{n_k}(\mathbb{C})$ unitaries satisfying
\begin{align*}
 W_{k,g} W_{k,g'} = \Omega(g,g') W_{k,gh} \quad\text{for }g,g' \in \Lambda.
\end{align*}
Assume that the action $\Ad w_g \circ \alpha_g$ on $p M_\varphi p$ has trivial fixed point algebra. Since all the $p_k$'s are invariant under $\Ad w_g \circ \alpha_g$, necessarily $p = p_1$. Because $v_1 \in M_{\varphi,\lambda_k} \otimes \overline{\mathbb{C}^{n_1}}$, we have $\varphi(p) = \varphi(v_1 v_1^\star) = \lambda_1 (\varphi \otimes \Tr)(v_1^\star v_1) = \lambda_1 n_1$. Putting $V = v_1, \lambda = \lambda_1, n = n_1$ and $\pi(g) = W_{1,g}$, we arrive at the conclusions of the corollary.

It is easy to see that $V  (1 \otimes M_n(\mathbb{C})) V^\star$ is a finite dimensional subspace of $p M_\varphi p$, globally invariant under $\Ad w_g \circ \alpha_g$. If no such subspace exists, then it follows that $n=1$, and $\pi(g) \in \T$.
\end{proof}

In the proof of \cref{thm.cocycle-cs}, we used the following general lemma. Assume that $(N,\vphi)$ is a von Neumann algebra with a faithful normal almost periodic state, that $Q \subset N$ is a von Neumann subalgebra and that $E : N \recht Q$ is a state preserving conditional expectation. Note that $\sigma_t^\vphi(Q) = Q$ and that the restriction of $\sigma_t^\vphi$ to $Q$ equals the modular automorphism group of $Q$.

Denote by $J$ and $\Delta$ the modular conjugation and modular operator on $L^2(N)$. Denote by $e : L^2(N) \recht L^2(Q)$ the orthogonal projection of $L^2(N)$ onto $L^2(Q)$. The von Neumann algebra $\langle N,e \rangle$ acting on $L^2(N)$ generated by $N$ and $e$ coincides with the commutant $J Q' J$ and is called the basic construction. It comes with a canonical normal semifinite faithful weight $\vphih$ characterized by the following two properties: $\sigma^{\vphih}_t = \Ad \Delta^{it}$ and $\vphih(x e y) = \vphi(xy)$ for all $x,y \in N$.

For every $\lambda > 0$, define the projection $E_\lambda \in \langle N,e \rangle$ as the join of the support projections of all $x e x^\star$, $x \in N_{\vphi,\lambda}$. Also, denote by $P_\lambda$ the orthogonal projection of $L^2(N)$ onto $L^2(N_{\vphi,\lambda})$. Define $\cS_\lambda$ as the commutant of the right action of $Q_\vphi$ on $L^2(N_{\vphi,\lambda})$. The restriction of $\vphi$ to $Q_\vphi$ is a normal faithful trace $\tau$ on $Q_\vphi$. This induces a normal semifinite faithful trace $\tauh$ on $\cS_\lambda$ characterized by the formula $\tauh(V W^\star) = \tau(W^\star V)$ whenever $V,W : L^2(Q_\vphi) \recht L^2(N_{\vphi,\lambda})$ are bounded, right $Q_\vphi$-linear operators.

\begin{lemma} \label{lem.trace-formula}
We have $\langle N,e \rangle_{\vphih} \, P_\lambda = \cS_\lambda$ and $\tauh(T P_\lambda) = \frac{1}{\lambda} \vphih(T E_\lambda)$ for all $T \in \langle N,e \rangle_{\vphih}^+$.
\end{lemma}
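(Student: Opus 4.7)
The plan proceeds in three steps: realising every element of $\cS_\lambda$ as $TP_\lambda$ for some $T \in \langle N,e\rangle_{\vphih}$, verifying the trace identity on a family of rank-one generators, and extending by a density/normality argument.

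For the first step, I would note that any $T \in \langle N,e\rangle_{\vphih}$ commutes with $\Delta^{it}$ (by the centralizer condition) and hence with each spectral projection $P_\mu$ of $\Delta_\vphi$; since $\langle N,e\rangle = (JQJ)' \subset (JQ_\vphi J)'$, $T$ commutes with right multiplication by $Q_\vphi$. Therefore $T \mapsto TP_\lambda$ defines a well-defined normal $*$-homomorphism $\pi_\lambda : \langle N,e\rangle_{\vphih} \to \cS_\lambda$. For $x, y \in N_{\vphi,\lambda}$ with $\vphi(xx^\star), \vphi(yy^\star) < \infty$, a direct modular check shows $xey^\star \in \langle N,e\rangle_{\vphih}$ (the weights $\lambda$ of $x$ and $\lambda^{-1}$ of $y^\star$ cancel under $\sigma_t^{\vphih}$), and $\pi_\lambda(xey^\star)$ acts on $L^2(N_{\vphi,\lambda})$ by $\hat\xi \mapsto \hat x \cdot \widehat{E(y^\star\xi)}$, i.e.\ as the rank-one right-$Q_\vphi$-linear operator $|\hat x\rangle\langle\hat y|_{Q_\vphi}$. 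Such operators span a $\sigma$-weakly dense $*$-ideal of $\cS_\lambda$ (the finite-$\tauh$-trace ideal from Hilbert $Q_\vphi$-module theory), so by normality $\pi_\lambda$ is surjective and $\langle N,e\rangle_{\vphih}P_\lambda = \cS_\lambda$.

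For the second step, fix $T = xey^\star$ with $x,y \in N_{\vphi,\lambda}$. The left side computes to $\tauh(|\hat x\rangle\langle\hat y|_{Q_\vphi}) = \tau(\langle\hat y,\hat x\rangle_{Q_\vphi}) = \tau(E(y^\star x)) = \vphi(y^\star x)$. For the right side, the basic-construction identity $ex^\star x e = E(x^\star x)e$ gives $(xey^\star)^\star(xey^\star) = y\,E(x^\star x)\,e\,y^\star = y'e(y')^\star$ with $y' = yE(x^\star x)^{1/2} \in N_{\vphi,\lambda}$, so the right support projection of $xey^\star$ is dominated by $\operatorname{supp}(y'e(y')^\star) \leq E_\lambda$; the left support is similarly bounded by $E_\lambda$. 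Hence $E_\lambda\,xey^\star\,E_\lambda = xey^\star$, and $\vphih(TE_\lambda) = \vphih(xey^\star) = \vphi(xy^\star)$. The twisted-trace identity $\vphi(xy^\star) = \lambda\,\vphi(y^\star x)$, which holds since $x \in N_{\vphi,\lambda}$, then yields $\tfrac{1}{\lambda}\vphih(TE_\lambda) = \vphi(y^\star x) = \tauh(TP_\lambda)$.

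The main obstacle is the third step: extending the identity to all $T \in \langle N,e\rangle_{\vphih}^+$. Both $\omega_1(T) := \tauh(TP_\lambda)$ and $\omega_2(T) := \tfrac{1}{\lambda}\vphih(TE_\lambda)$ are normal weights on $\langle N,e\rangle_{\vphih}$, and the $*$-subalgebra spanned by $\{xey^\star : x,y \in N_{\vphi,\mu},\ \mu \in \pntspectrum\vphi\}$ is $\sigma$-weakly dense in $\langle N,e\rangle_{\vphih}$, since any centralized element of the basic construction has a spectral expansion with summands $xey^\star$ whose weights multiply to one. For $\mu$ with $\mu\lambda^{-1} \notin \Gamma_Q$ both sides vanish on the corresponding generators: the range of $xex^\star$ lies in spectral subspaces orthogonal both to $L^2(N_{\vphi,\lambda})$ and to the range of $E_\lambda$. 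For $\mu = \lambda\gamma$ with $\gamma \in \Gamma_Q$ one reduces to the already-verified case $\mu = \lambda$ by right-multiplying with partial isometries $q_0 \in Q_{\vphi,\gamma}$ (so that $xq_0, yq_0 \in N_{\vphi,\lambda}$) and summing over a partition of unity in $Q_\vphi$ built from such $q_0$; both sides transform consistently under this reduction. Combining these cases with the normality of both weights propagates the identity to the full positive cone of $\langle N,e\rangle_{\vphih}$, completing the proof.
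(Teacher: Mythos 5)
Your proposal is correct in substance, and its first two steps coincide with the paper's argument: the same normal $\star$-homomorphism $T \mapsto TP_\lambda$, the same identification of $xey^\star P_\lambda$ ($x,y \in N_{\vphi,\lambda}$) with the rank-one right-$Q_\vphi$-linear operator $V_xV_y^\star$ yielding $\langle N,e\rangle_{\vphih}P_\lambda = \cS_\lambda$, and the same generator computation $\tauh(xey^\star P_\lambda) = \vphi(y^\star x) = \tfrac{1}{\lambda}\vphi(xy^\star) = \tfrac{1}{\lambda}\vphih(xey^\star)$ via the twisted trace identity. Where you genuinely diverge is in passing from these generators to all of $\langle N,e\rangle_{\vphih}^+$: the paper observes that $E_\lambda$ is a \emph{central} projection of $\langle N,e\rangle_{\vphih}$ with $\langle N,e\rangle_{\vphih}E_\lambda = \cT_\lambda$, the algebra generated by $\{xey^\star \mid x,y \in N_{\vphi,\lambda}\}$, which localizes both sides of the identity to $\cT_\lambda$ at once and makes any case analysis over $\mu$ unnecessary; you instead run through the spectral generators $aeb^\star$, $a,b \in N_{\vphi,\mu}$, case by case. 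Your route closes, but it contains one imprecision worth repairing: the set $\Gamma_Q = \{\gamma \mid Q_{\vphi,\gamma} \neq 0\}$ is symmetric under inversion but in general not a group, so when $\mu\lambda^{-1} \notin \Gamma_Q$ the ranges of $aea^\star$ and of $E_\lambda$ need not lie in disjoint eigenspaces of $\Delta$ (one can have $\mu\gamma = \lambda\gamma'$ with $\gamma,\gamma' \in \Gamma_Q$). The required orthogonality still holds, but for the finer reason that $\langle \widehat{aq}, \widehat{xq'}\rangle = \vphi\bigl(q'^\star E(x^\star a) q\bigr)$ with $E(x^\star a) \in Q_{\vphi,\lambda^{-1}\mu} = \{0\}$. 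Likewise, in the case $\mu = \lambda\gamma$ your partial isometries give a partition only of $z_\gamma = \bigvee\{qq^\star \mid q \in Q_{\vphi,\gamma^{-1}}\}$ rather than of $1$, and the leftover piece $a(1-z_\gamma)eb^\star$ must be (and is, by the same computation) annihilated by both $P_\lambda$ and $E_\lambda$. With these repairs your argument is complete; the centrality of $E_\lambda$ is simply a more efficient packaging of the same facts.
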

\begin{proof}
Note that $\langle N,e \rangle_{\vphih}$ is generated by $\{ a e b^\star \mid \mu > 0 \; , \; a,b \in N_{\vphi,\mu} \}$. Define $\cT_\lambda$ as the von Neumann algebra generated by $\{ x e y^\star \mid x,y \in N_{\vphi,\lambda}\}$. By definition, $E_\lambda$ is the unit of $\cT_\lambda$. When $\mu > 0$, $a,b \in N_{\vphi,\mu}$ and $x,y \in N_{\vphi,\lambda}$, we have
$$a e b^\star \; x e y^\star = a E(b^\star x) e y^\star \; .$$
It follows that $E_\lambda$ is a central projection in $\langle N,e \rangle_{\vphih}$ and $\langle N,e \rangle_{\vphih} \; E_\lambda = \cT_\lambda$.
For $x \in N_{\vphi,\lambda}$, define $V_x : L^2(Q_\vphi) \recht L^2(N_{\vphi,\lambda})$ by $V_x(a) = xa$. For all $x,y \in N_{\vphi,\lambda}$, we have $x e y^\star \; P_\lambda = V_x V_y^\star$.
It follows that $\cT_\lambda \, P_\lambda = \cS_\lambda$ and
$$\tauh(x e y^\star P_\lambda) = \tauh(V_x V_y^\star) = \tau(V_y^\star V_x) = \vphi(y^\star x) = \frac{1}{\lambda} \, \vphi(x y^\star) \; .$$
Since $\vphih(x e y^\star) = \vphi(xy^\star)$, the lemma is proved.
\end{proof}

\bibliography{free-group-bernoulli-shifts}{}

\begin{thebibliography}{\bibfont Bow08b}\bibfont

\bibitem[Bek89]{bekka;amenable-unitary-representations}
Mohammed E.~B. Bekka.
\newblock Amenable unitary representations of locally compact groups.
\newblock {\em Invent. Math.}, 100(2):383--401, 1990.

\bibitem[Bow08a]{bowen;measure-conjugacy-invariant-free-group-actions}
Lewis Bowen.
\newblock A measure-conjugacy invariant for free group actions.
\newblock {\em Ann. of Math. (2)}, 171(2):1387--1400, 2010.

\bibitem[Bow08b]{bowen;measure-conjugacy-invariants-sofic-groups}
Lewis Bowen.
\newblock Measure conjugacy invariants for actions of countable sofic groups.
\newblock {\em J. Amer. Math. Soc.}, 23(1):217--245, 2010.

\bibitem[Bow09]{bowen;orbit-equivalence-coinduced-actions-free-products}
Lewis Bowen.
\newblock Orbit equivalence, coinduced actions and free products.
\newblock {\em Groups Geom. Dyn.}, 5(1):1--15, 2011.

\bibitem[Bow11]{bowen;every-group-almost-ornstein}
Lewis Bowen.
\newblock Every countably infinite group is almost {O}rnstein.
\newblock In {\em Dynamical systems and group actions}, volume 567 of {\em
  Contemp. Math.}, pages 67--78. Amer. Math. Soc., Providence, RI, 2012.

\bibitem[CIK13]{chifan-ioana-kida;arbitrary-actions-central-quotients-braid-groups}
Ionut Chifan, Adrian Ioana, and Yoshikata Kida.
\newblock {W*-superrigidity for arbitrary actions of central quotients of braid
  groups}.
\newblock {\em ArXiv e-prints}, July 2013, 1307.5245.
\newblock To appear in {\it Math. Ann.}

\bibitem[CNT87]{connes-narnhofer-thirring;dynamical-entropy}
Alain Connes, Heide Narnhofer, and Walter~E. Thirring.
\newblock Dynamical entropy of {$C^\ast$} algebras and von {N}eumann algebras.
\newblock {\em Comm. Math. Phys.}, 112(4):691--719, 1987.

\bibitem[Con73]{connes;classification-facteurs;ecole-normale}
Alain Connes.
\newblock Une classification des facteurs de type {$\III$}.
\newblock {\em Ann. Sci. \'Ecole Norm. Sup. (4)}, 6:133--252, 1973.

\bibitem[Con74]{connes;almost-periodic-states}
Alain Connes.
\newblock Almost periodic states and factors of type {$\III_1$}.
\newblock {\em J. Functional Analysis}, 16:415--445, 1974.

\bibitem[Con75]{connes;classification-injective-factors}
Alain Connes.
\newblock Classification of injective factors. {C}ases {$\II_{1},$}
  {$\II_{\infty },$} {$\III_{\lambda },$} {$\lambda \not=1$}.
\newblock {\em Ann. of Math. (2)}, 104(1):73--115, 1976.

\bibitem[CS74]{connes-stormer;entropy-automorphisms}
Alain Connes and Erling St{\o}rmer.
\newblock Entropy for automorphisms of {$\II_{1}$} von {N}eumann algebras.
\newblock {\em Acta Math.}, 134(3-4):289--306, 1975.

\bibitem[Dyk94]{dykema;crossed-product-decompositions}
Kenneth Dykema.
\newblock Crossed product decompositions of a purely infinite von {N}eumann
  algebra with faithful, almost periodic weight.
\newblock {\em Indiana Univ. Math. J.}, 44(2):433--450, 1995.

\bibitem[Gab01]{gaboriau;invariants-relations-equivalence-groupes}
Damien Gaboriau.
\newblock Invariants {$l^2$} de relations d'\'equivalence et de groupes.
\newblock {\em Publ. Math. Inst. Hautes \'Etudes Sci.}, (95):93--150, 2002.

\bibitem[Haa85]{haagerup;uniqueness-injective-factor-type-III}
Uffe Haagerup.
\newblock Connes' bicentralizer problem and uniqueness of the injective factor
  of type {$\III_1$}.
\newblock {\em Acta Math.}, 158(1-2):95--148, 1987.

\bibitem[Ioa06]{ioana;rigidity-results-wreath-product}
Adrian Ioana.
\newblock Rigidity results for wreath product {$\II_1$} factors.
\newblock {\em J. Funct. Anal.}, 252(2):763--791, 2007.

\bibitem[Ioa12]{ioana;cartan-subalgebras-amalgamated-free-product-factors}
Adrian Ioana.
\newblock {Cartan subalgebras of amalgamated free product $\II_1$ factors}.
\newblock {\em ArXiv e-prints}, June 2012, 1207.0054.
\newblock To appear in {\it Ann. Sci. {\'E}cole Norm. Sup.}

\bibitem[IPP05]{ioana-peterson-popa;amalgamated-free-product}
Adrian Ioana, Jesse Peterson, and Sorin Popa.
\newblock Amalgamated free products of weakly rigid factors and calculation of
  their symmetry groups.
\newblock {\em Acta Math.}, 200(1):85--153, 2008.

\bibitem[JP81]{jones-popa;properties-MASA-factors}
Vaughan Jones and Sorin Popa.
\newblock Some properties of {MASA}s in factors.
\newblock In {\em Invariant subspaces and other topics ({T}imi\c
  soara/{H}erculane, 1981)}, volume~6 of {\em Operator Theory: Adv. Appl.},
  pages 89--102. Birkh\"auser, Basel-Boston, Mass., 1982.

\bibitem[MRV11]{meesschaert-raum-vaes}
Niels Meesschaert, Sven Raum, and Stefaan Vaes.
\newblock Stable orbit equivalence of {B}ernoulli actions of free groups and
  isomorphism of some of their factor actions.
\newblock {\em Expo. Math.}, 31(3):274--294, 2013.

\bibitem[Ocn85]{ocneanu;actions-amenable-groups}
Adrian Ocneanu.
\newblock {\em Actions of discrete amenable groups on von {N}eumann algebras},
  volume 1138 of {\em Lecture Notes in Mathematics}.
\newblock Springer-Verlag, Berlin, 1985.

\bibitem[Orn70a]{ornstein;bernoulli-shifts-same-entropy}
Donald Ornstein.
\newblock Bernoulli shifts with the same entropy are isomorphic.
\newblock {\em Advances in Math.}, 4:337--352 (1970), 1970.

\bibitem[Orn70b]{ornstein;bernoulli-shifts-infinite-entropy}
Donald Ornstein.
\newblock Two {B}ernoulli shifts with infinite entropy are isomorphic.
\newblock {\em Advances in Math.}, 5:339--348 (1970), 1970.

\bibitem[OW80]{ornstein-weiss;ergodic-theory-amenable-group-actions}
Donald~S. Ornstein and Benjamin Weiss.
\newblock Ergodic theory of amenable group actions. {I}. {T}he {R}ohlin lemma.
\newblock {\em Bull. Amer. Math. Soc. (N.S.)}, 2(1):161--164, 1980.

\bibitem[Pop02]{popa;class-factors-betti-numbers}
Sorin Popa.
\newblock On a class of type {$\II_1$} factors with {B}etti numbers invariants.
\newblock {\em Ann. of Math. (2)}, 163(3):809--899, 2006.

\bibitem[Pop03]{popa;strong-rigidity-malleable-actions-I}
Sorin Popa.
\newblock Strong rigidity of {$\II_1$} factors arising from malleable actions
  of {$w$}-rigid groups. {I}.
\newblock {\em Invent. Math.}, 165(2):369--408, 2006.

\bibitem[Pop05a]{popa;cocycle-superrigidity-malleable-actions}
Sorin Popa.
\newblock Cocycle and orbit equivalence superrigidity for malleable actions of
  {$w$}-rigid groups.
\newblock {\em Invent. Math.}, 170(2):243--295, 2007.

\bibitem[Pop05b]{popa;rigidity-non-commutative-bernoulli}
Sorin Popa.
\newblock Some rigidity results for non-commutative {B}ernoulli shifts.
\newblock {\em J. Funct. Anal.}, 230(2):273--328, 2006.

\bibitem[Pop06]{popa;superrigidity-malleable-actions-spectral-gap}
Sorin Popa.
\newblock On the superrigidity of malleable actions with spectral gap.
\newblock {\em J. Amer. Math. Soc.}, 21(4):981--1000, 2008.

\bibitem[PV06]{popa-vaes;strong-rigidity-generalized-bernoulli-actions}
Sorin Popa and Stefaan Vaes.
\newblock Strong rigidity of generalized {B}ernoulli actions and computations
  of their symmetry groups.
\newblock {\em Adv. Math.}, 217(2):833--872, 2008.

\bibitem[PV11]{popa-vaes;unique-cartan-decomposition-factors-free}
Sorin Popa and Stefaan Vaes.
\newblock Unique {C}artan decomposition for {$\II_1$} factors arising from
  arbitrary actions of free groups.
\newblock {\em Acta Math.}, 212(1):141--198, 2014.

\bibitem[PV12]{popa-vaes;unique-cartan-decomposition-factors-hyperbolic}
Sorin Popa and Stefaan Vaes.
\newblock Unique {C}artan decomposition for {$\II_1$} factors arising from
  arbitrary actions of hyperbolic groups.
\newblock {\em ArXiv e-prints}, January 2012, 1201.2824.
\newblock To appear in {\em J.~Reine Angew.~Math.}

\bibitem[Tak03]{takesaki;theory-operator-algebras-II}
Masamichi Takesaki.
\newblock {\em Theory of operator algebras. {II}}, volume 125 of {\em
  Encyclopaedia of Mathematical Sciences}.
\newblock Springer-Verlag, Berlin, 2003.
\newblock Operator Algebras and Non-commutative Geometry, 6.

\bibitem[Vae13]{vaes;normalizers-inside-amalgamated-free-product}
Stefaan Vaes.
\newblock {Normalizers inside amalgamated free product von Neumann algebras}.
\newblock {\em ArXiv e-prints}, May 2013, 1305.3225.
\newblock To appear in {\it Publ.~Res.~Inst.~Math.~Sci.}

\end{thebibliography}
\bibliographystyle{hmalpha}
\end{document}